\documentclass{amsart}
\usepackage{fancyhdr}
\usepackage[T1]{fontenc}
\usepackage[utf8]{inputenc}
\usepackage{lmodern}
\usepackage[english]{babel}
\usepackage{csquotes}
\usepackage{amsmath}
\usepackage{amssymb}
\usepackage{mathtools}
\usepackage[dvipsnames]{xcolor}
\usepackage{graphicx}
\usepackage[hypertexnames=false,colorlinks=true,allcolors=black]{hyperref} 
\usepackage{enumitem}
\usepackage{MnSymbol}

\numberwithin{equation}{section}

\usepackage[noabbrev,capitalize,english]{cleveref}

\theoremstyle{plain}
 \newtheorem{theoremINTERNAL}{Theorem}[section]
 \newtheorem{lemmaINTERNAL}[theoremINTERNAL]{Lemma}
 \newtheorem{propositionINTERNAL}[theoremINTERNAL]{Proposition}
 \newtheorem{corollaryINTERNAL}[theoremINTERNAL]{Corollary}

\theoremstyle{definition}
 \newtheorem{definitionINTERNAL}[theoremINTERNAL]{Definition}
 \newtheorem{exampleINTERNAL}[theoremINTERNAL]{Example}
 \newtheorem{remarkINTERNAL}[theoremINTERNAL]{Remark}
\crefname{theorem}{Theorem}{Theorems}
\crefname{lemma}{Lemma}{Lemmas}
\crefname{proposition}{Proposition}{Propositions}
\crefname{corollary}{Corollary}{Corollaries}
\crefname{definition}{Definition}{Definitions}
\crefname{example}{Example}{Examples}
\crefname{remark}{Remark}{Remarks}
\crefformat{theorem}{Thm.~#2#1#3}
\crefformat{lemma}{Lem.~#2#1#3}
\crefformat{proposition}{Prop.~#2#1#3}
\crefformat{corollary}{Cor.~#2#1#3}
\crefformat{definition}{Def.~#2#1#3}
\crefformat{example}{Ex.~#2#1#3}
\crefformat{remark}{Rem.~#2#1#3}
\crefformat{equation}{(#2#1#3)}

\NewDocumentEnvironment{theorem}{O{} o}
{
 \newcommand*{\TEMPtheorem}{#2}
 \ifblank{#1}
 {\theoremINTERNAL}
 {\theoremINTERNAL[#1]}
 \IfNoValueTF{#2}

 {\label[theorem]{\TEMPtheorem}}
}
{
 \endtheoremINTERNAL
}

\NewDocumentEnvironment{lemma}{O{} o}
{
 \newcommand*{\TEMPlemma}{#2}
 \ifblank{#1}
 {\lemmaINTERNAL}
 {\lemmaINTERNAL[#1]}
 \IfNoValueTF{#2}

 {\label[lemma]{\TEMPlemma}}
}
{
 \endlemmaINTERNAL
}

\NewDocumentEnvironment{proposition}{O{} o}
{
 \newcommand*{\TEMPproposition}{#2}
 \ifblank{#1}
 {\propositionINTERNAL}
 {\propositionINTERNAL[#1]}
 \IfNoValueTF{#2}

 {\label[proposition]{\TEMPproposition}}
}
{
 \endpropositionINTERNAL
}

\NewDocumentEnvironment{corollary}{O{} o}
{
 \newcommand*{\TEMPcorollary}{#2}
 \ifblank{#1}
 {\corollaryINTERNAL}
 {\corollaryINTERNAL[#1]}
 \IfNoValueTF{#2}

 {\label[corollary]{\TEMPcorollary}}
}
{
 \endcorollaryINTERNAL
}

\NewDocumentEnvironment{definition}{O{} o}
{
 \newcommand*{\TEMPdefinition}{#2}
 \ifblank{#1}
 {\definitionINTERNAL}
 {\definitionINTERNAL[#1]}
 \IfNoValueTF{#2}

 {\label[definition]{\TEMPdefinition}}
}
{
 \enddefinitionINTERNAL
}

\NewDocumentEnvironment{example}{O{} o}
{
 \newcommand*{\TEMPexample}{#2}
 \ifblank{#1}
 {\exampleINTERNAL}
 {\exampleINTERNAL[#1]}
 \IfNoValueTF{#2}

 {\label[example]{\TEMPexample}}
}
{
 \endexampleINTERNAL
}

\NewDocumentEnvironment{remark}{O{} o}
{
 \newcommand*{\TEMPremark}{#2}
 \ifblank{#1}
 {\remarkINTERNAL}
 {\remarkINTERNAL[#1]}
 \IfNoValueTF{#2}

 {\label[remark]{\TEMPremark}}
}
{
 \endremarkINTERNAL
}

\DeclarePairedDelimiter{\abs}{\lvert}{\rvert} \DeclarePairedDelimiter{\dpair}{\langle}{\rangle}
\DeclarePairedDelimiter{\graffe}{\{}{\}}
\DeclarePairedDelimiter{\norm}{\|}{\|}

\DeclarePairedDelimiter{\set}{\{}{\}}
\DeclarePairedDelimiter{\tonde}{(}{)}
\usepackage{pict2e}
\newcommand{\xmathpalette}[2]{\mathchoice
 {#1\displaystyle\textfont{#2}}
 {#1\textstyle\textfont{#2}}
 {#1\scriptstyle\scriptfont{#2}}
 {#1\scriptscriptstyle\scriptscriptfont{#2}}
}

\makeatletter

\newcommand{\MRESTRICTION@thickness}[1]{\dimexpr1.5\fontdimen8 #13\relax}

\newcommand{\MRESTRICTIONA}{\mspace{3mu}{\xmathpalette\MRESTRICTIONA@\relax}\mspace{3mu}}
\newcommand{\MRESTRICTIONA@}[3]{\begingroup
 \setlength\unitlength{\dimexpr\fontcharht#21`A-0.5\MRESTRICTION@thickness{#2}}\raisebox{0.5\dimexpr\MRESTRICTION@thickness{#2}}{\begin{picture}(1,1)
 \roundcap\roundjoin
 \linethickness{\MRESTRICTION@thickness{#2}}\polyline(0,1)(0,0)(1,0)
 \end{picture}}\endgroup
}

\newcommand{\MRESTRICTIONB}{\mspace{5mu}{\xmathpalette\MRESTRICTIONB@\relax}\mspace{5mu}}
\newcommand{\MRESTRICTIONB@}[3]{\begingroup
 \setlength\unitlength{\dimexpr0.8\fontcharht#21`A-0.5\MRESTRICTION@thickness{#2}}\raisebox{0.5\dimexpr\MRESTRICTION@thickness{#2}}{\begin{picture}(0.5,1)
 \roundcap\roundjoin
 \linethickness{\MRESTRICTION@thickness{#2}}\polyline(0,1)(0,0)(0.5,0)
 \end{picture}}\endgroup
}

\newcommand{\MRESTRICTIONC}{\mspace{3mu}{\xmathpalette\MRESTRICTIONC@\relax}\mspace{3mu}}
\newcommand{\MRESTRICTIONC@}[3]{\begingroup
 \setlength\unitlength{\dimexpr0.8\fontcharht#21`A-0.5\MRESTRICTION@thickness{#2}}\raisebox{0.5\dimexpr\MRESTRICTION@thickness{#2}}{\begin{picture}(1,1)
 \roundcap\roundjoin
 \linethickness{\MRESTRICTION@thickness{#2}}\polyline(0,1)(0,0)(1,0)
 \end{picture}}\endgroup
}
\makeatother

\NewDocumentCommand{\cardinality}{o}{\IfNoValueT{#1}{\char"0023}
\IfNoValueF{#1}{\char"0023#1}
}
\newcommand*{\littletaller}{\mathchoice{\vphantom{\big|}}{}{}{}}
\NewDocumentCommand{\mres}{}{\MRESTRICTIONC}
\newcommand*{\restr}[2]{\left.\kern-\nulldelimiterspace #1 \littletaller \right|_{#2}}
\NewDocumentCommand{\sommatoria}{o o m}{\IfNoValueT{#1}{\IfNoValueT{#2}{\sum#3}}
\IfNoValueT{#1}{\IfNoValueF{#2}{\sum_{#1}#3}}
\IfNoValueF{#1}{\IfNoValueT{#2}{\sum_{#1}#3}}
\IfNoValueF{#1}{\IfNoValueF{#2}{\sum^{#2}_{#1}#3}}
}

\begin{document}
 \title{Generalized BMO-type seminorms and vector-valued Sobolev functions}
	
	\author[K.~Bessas]{Konstantinos Bessas}
	\address[K.~Bessas]{Dipartimento di Matematica,
 Università di Pavia, Via Adolfo Ferrata 5, 27100 Pavia, Italy}
	\email{konstantinos.bessas@unipv.it}
	
	\author[S.~Guarino Lo Bianco]{Serena Guarino Lo Bianco}
	\address[S.~Guarino Lo Bianco]{Dipartimento di Scienze FIM, Università degli studi di Modena e Reggio Emilia, Via Campi 213/b, 41125 Modena, Italy}
	\email{serena.guarinolobianco@unimore.it}

	\author[R.~Schiattarella]{Roberta Schiattarella}
	\address[R.~Schiattarella]{Dipartimento di Matematica e Applicazioni ”R. Caccioppoli”, Università di Napoli “Federico II”, via Cintia, 80126 Napoli, Italy}
	\email{roberta.schiattarella@unina.it}

    \dedicatory{Dedicated to G.Moscariello on the occasion of her 70\textsuperscript{th} birthday}
	
	\date{\today}
	
	\keywords{BMO-type seminorms, Sobolev spaces, Sobolev functions, Poincaré inequality, Korn inequality, Poincaré-Korn inequality. }
	
	\subjclass[2020]{46E35, 46E40, 26D10, 	30H35}
	
	\thanks{\textit{Acknowledgements}. 
	K.B. has been supported by Fondazione Cariplo, grant n° 2023-0873.\\R.S. has been partially funded by PRIN Project
2022XZSAFN.\\
    The authors are members of “GNAMPA” of Istituto Nazionale di Alta Matematica (INdAM). 
	K.B. is 
    partially supported
	by the INdAM - GNAMPA 2026 Project "Metodi non locali classici e distribuzionali per problemi variazionali",
	codice CUP E53C25002010001.}

	\begin{abstract}
		We establish a pointwise limit theorem for a broad class of pa\-ra\-me\-ter-\-de\-pen\-dent BMO-type seminorms as the parameter tends to zero. By introducing novel BMO-type seminorms, we provide a unified framework that extends several existing results and yields non-distributional characterizations of Sobolev-type spaces, both in the scalar and in the vector-valued setting.
More precisely, for any open set $\Omega\subset \mathbb{R}^n$ and any $p\in (1, \infty)$, we provide a characterization of the Sobolev space $W^{1,p}(\Omega; \mathbb{R}^m)$. In addition, we characterize the space $E^{1,p}(\Omega;\mathbb{R}^n)$ of $L^p$ maps with $p$-integrable distributional symmetric gradient.\\
Finally, for all $p\in [1, \infty)$, we show that these seminorms converge to integral functionals with convex, $p$-homogeneous integrands associated with the distributional gradient and the symmetric gradient.
	\end{abstract}

	\maketitle

	\tableofcontents

\section{Introduction}\label{sec:introduction}
Let $\Omega \subset \mathbb{R}^n$ be an open set. The characterization of Sobolev spaces through integral functionals that do not explicitly involve weak derivatives has been a central theme in analysis and the calculus of variations. Given $u\in L^p(\Omega)$, $p\in [1,+\infty)$, one seeks structural conditions under which
the boundedness of 
suitable non-distributional one-parameter energies
characterizes membership in $W^{1,p}(\Omega)$ and yields, in the limit, the $L^p(\Omega)$ norm of the gradient.\\
A foundational result in this direction is established in
\cite{BBM01-MR3586796} where the classical Sobolev seminorm is encoded in the asymptotical behavior of purely nonlocal difference-quotient energies. For $s\in(0,1)$ define the fractional seminorm
\[
[u]_{W^{s,p}(\Omega)}^p
=
\int_{\Omega}\int_{\Omega}
\frac{|u(x)-u(y)|^p}{|x-y|^{n+sp}}\,\mathrm{d}x\,\mathrm{d}y.
\]
If $1<p<\infty$ and $\Omega$ is bounded with Lipschitz boundary,
then for every $u\in W^{1,p}(\Omega)$
\[
\lim_{s\uparrow 1}
(1-s)[u]_{W^{s,p}(\Omega)}^p
=
K_{n,p}\int_{\Omega} |\nabla u|^p\,\mathrm{d}x,
\]
where $K_{n,p}>0$ depends only on $n$ and $p$.
Conversely, if
\[
\liminf_{s\uparrow 1} (1-s)[u]_{W^{s,p}(\Omega)}^p < +\infty,
\]
then $u\in W^{1,p}(\Omega)$. \\
An alternative formulation, introduced in \cite[Theorem 2]{BBM01-MR3586796}, replaces fractional powers by radial kernels $\rho_\varepsilon$
  concentrating at the origin. In this case one obtains
\begin{equation}\label{fda98y}
    \lim_{\varepsilon \to 0}  \int_\Omega\int_\Omega\frac{|u(y)-u(x)|^p}{|y-x|^p}\rho_\varepsilon(|y-x|)\, \mathrm{d}y\, \mathrm{d}x = K_{p,n}\int_\Omega |\nabla u|^p\, \mathrm{d}x,
\end{equation}
with the convention that the RHS in \eqref{fda98y} is equal to $+\infty$ if $u\notin W^{1,p}(\Omega)$. These results provide a derivative-free characterization of Sobolev spaces based purely on nonlocal quantities.

A conceptually different approach was introduced in \cite{FusMosSbo18-MR3816417}, where $p$-mean oscillation-type functionals inspired by BMO seminorms were considered.  Given a function $u\in L^p_\text{loc}(\Omega)$ (here $\Omega$ is an arbitrary open set) and any $\varepsilon >0$, they define  
$$
\kappa_\varepsilon(u;p;\Omega)\coloneqq\varepsilon^{n-p}\sup_{\mathcal G_\varepsilon} 
 \sum_{Q'\in\mathcal G_\varepsilon}\strokedint_{ Q' } \left| u - \strokedint_{ Q' } u \right|^p\thinspace\mathrm{d} x, 
$$
where the supremum is taken over all families $\mathcal G_\varepsilon$ of disjoint $\varepsilon$-cubes $Q'$ of side length $\varepsilon$ contained in $\Omega$ and arbitrary orientation. They proved that for every $u\in L^p_\text{loc}(\Omega)$,
$$
\nabla u\in L^p(\Omega;\mathbb{R}^n)
\quad\Longleftrightarrow\quad \liminf_{\varepsilon \to 0  }\kappa_\varepsilon(u;p;\Omega)<+\infty.$$
Moreover, they identified the limit
\begin{equation*}
\lim_{\varepsilon\to 0^+} \kappa_\varepsilon(u;p;\Omega) = \gamma(n,p) \| \nabla u\|_{L^p(\Omega)}^p,
\end{equation*}
where $\gamma(n,p)$ is a positive constant that depends only on $n$ and $p$.

Variants of this result were subsequently developed in \cite{FarFusGuaSch20-MR4062330}, \cite{FarGuaSch20-MR4109099, DiFFio20-MR4093787}. In these works two main generalizations are introduced. First, the cubes $Q^\prime$ are replaced by $\varepsilon-$dilation of a bounded connected open set $D$ (called the reference cell). Second, the families of sets $\mathcal G_\varepsilon$ can either be made of sets with arbitrary orientation (isotropic case) or with fixed orientation  (anisotropic case).
When rotations are not allowed, it is  proved in \cite{FarGuaSch20-MR4109099} that by considering  
\[
H_\varepsilon^D(u,p,\Omega)\coloneqq\varepsilon^{n-p}\sup_{\mathcal H_\varepsilon} 
 \sum_{D'\in\mathcal H_\varepsilon}\strokedint_{ D' } \left| u - \strokedint_{ D' } u \right|^p\thinspace\mathrm{d} x, 
\]
where $\mathcal H_\varepsilon$ is any pairwise disjoint family of translations $D^\prime$ of $\varepsilon D$ contained in $\Omega$, there exists a locally Lipschitz continuous $p$-homogeneous function $\psi_p^D: \mathbb R^n\to [0,+\infty)$ such that
\[
\lim_{\varepsilon\to 0^+} H_\varepsilon^D(u,p,\Omega) = \int_{\mathbb R^n} \psi_p^D(\nabla u(x))\, \mathrm{d}x.
\]

In the vector-valued setting, the situation becomes richer.
Given $\Omega\subset \mathbb{R}^n$ open bounded with sufficiently smooth boundary, for vector fields $u: \Omega \to \mathbb R^n$ a non-local limiting integral formula, avoiding the direct manipulation of distributional derivatives, was obtained in the spirit of a representation of the BBM difference quotient for Sobolev functions. 
In particular, in the seminal work \cite{Men12-MR2965673}, the author introduced a nonlocal functional of the form
\[
\Phi_{\varepsilon}^p[u] = \int_\Omega\int_\Omega \frac{|(u(y)-u(x))\cdot ( y-x)|^p}{|y-x|^{2p}}\rho_\varepsilon(y-x)\, \mathrm{d}y\,\mathrm{d}x,
\]
where $\rho_\varepsilon$ are standard radial mollifiers.

A variant of this operator appears in continuum mechanics within the framework of peridynamics, where equations are formulated using integral operators rather than differential ones  (see \cite{MenSpe-MR3299181,DiFSol25-MR4840265, MenDu-MR3424902} and the references therein).
It was proved in \cite[Theorem 2.2]{Men12-MR2965673},  that for $p\in (1,\infty)$ and $u\in L^p(\Omega, \mathbb R^n)$, 
\[
u\in W^{1,p}(\Omega,\mathbb R^n)
\quad\Longleftrightarrow\quad \liminf_{\varepsilon \to 0^+}  \Phi_{\varepsilon}^p[u]<+\infty.
\]
More precisely, 
\[
\lim_{\varepsilon\to 0^+} \Phi_{\varepsilon}^p[u] = \int_\Omega \mathbf{Q}_p(\mathcal Eu(x))\, \mathrm{d}x,
\]
where $$\mathcal Eu:= \frac{\nabla u + \nabla u ^T}{2}
$$ is the symmetric gradient and 
$\mathbf{Q}_p(A)$ is a norm in the space of symmetric matrices defined by
\[
\mathbf{Q}_p(A)= \left((2p+n)\int_{ B_1(0) } |\langle Aw, w\rangle|^p\, \mathrm{d}w\right)^{1/p},
\]
where $B_1(0)\subset\mathbb{R}^n$
is the Euclidean ball centered at zero of radius $1$. The nonlocal energy therefore controls only the symmetric part $\mathcal Eu$ of the gradient and the finiteness of the limiting functional implies $\mathcal Eu\in L^p(\Omega)$. For bounded Lipschitz domains, Korn's inequality yields
$$
W^{1,p}(\Omega; \mathbb R^n)= \left\{ u\in L^p(\Omega; \mathbb R^{n}): \mathcal Eu\in L^p(\Omega; \mathbb R^{n\times n}) \right\},
$$
together with the estimate
\begin{equation}\label{korn-poincare}
\inf_{B\in \mathbb{R}^{n\times n}_{\mathrm{skew}}, b\in \mathbb R^n} \| u-(Bx+b)\|_{W^{1,p}(\Omega; \mathbb R^n)} \leq C(\Omega) \|\mathcal Eu\|_{L^p(\Omega; \mathbb{R}^{n\times n})},
\end{equation} where $\mathbb{R}^{n\times n}_{\mathrm{skew}}$ is the space of real valued skew-symmetric $ n \times n $ matrices (see \cite{DurMus04-MR2108898, Men12-MR2965673}).

This identifies a structural principle: a nonlocal energy may encode only a prescribed linear component of the gradient, while the passage to full Sobolev regularity is enforced by a Korn-type inequality. 
This strategy is possible due to the regularity assumptions on $\Omega$. 

In our work, we do not require any regularity assumptions on $\Omega$ considering an arbitrary open set (in the spirit of \cite{FusMosSbo18-MR3816417,FarGuaSch20-MR4109099,DiFFio20-MR4093787}) and we introduce a broad class of one-parameter families of BMO-type seminorms originated by suitable functionals, which we call \emph{$p$-core functionals} (see \cref{def:core_function}). 
Through our definition, we can recover the classical scalar $p$-mean oscillation functionals available in the literature and introduce novel ones, defined also for vector-valued functions.

The goal of this paper is to present a general and unifying theorem ensuring the existence of
the pointwise limit of these one-parameter families as the parameter goes to zero.
We will also prove that, when this limit functional is evaluated at a generic function $u$, it admits an integral representation determined by the value that the limit functional itself assumes on linear maps.

More precisely, given a bounded open set $D\subset\mathbb{R}^n$ and a group $\Gamma$ of affine transformations on $\mathbb{R}^n$, a $p$-core functional $\alpha_p$ 
is a positive functional defined on pairs of the form
$(u,D')$, where $u\in L^p_\text{loc}(\mathbb{R}^n;\mathbb{R}^m)$ and $D'\subset\mathbb{R}^n$ is a transformation of $D$ according to $\Gamma$, satisfying certain axioms. These include $p$-homogeneity and convexity in the first variable and the validity of the following crucial inequality:
\begin{equation}\label{poinc}
\exists c_{1} >0
 \text{ s.t. }
 \alpha_{ p}\tonde*{ u , D } 
 \leq
  c_{1} \int_{ D } \abs*{ \nabla u}^{ p }_{2} \thinspace\mathrm{d} x 
 \text{ }
 \forall u \in W^{1,p}\tonde*{D;\mathbb{R}^{m}}.
\end{equation}
We observe that in the $p$-mean oscillation case, assuming also $D$ Lipschitz and connected, \eqref{poinc} corresponds to the Poincaré inequality.

In defining BMO-type seminorms originated by $p$-core functionals,
we introduce a further level of generality compared to \cite{FarFusGuaSch20-MR4062330,FarGuaSch20-MR4109099, DiFFio20-MR4093787} by allowing the presence of an \enquote{intermediate rotation} group $ \mathcal{G} $ between $\set*{ \operatorname{id}_{\mathbb{R}^{n}}}$ and $ \mathrm{SO}\tonde*{n} $.
More precisely, given an open set $ \Omega \subseteq \mathbb{R}^{n} $ (which we call the \emph{ambient space}) and a subgroup $ \mathcal{G} \subset \mathrm{SO}\tonde*{n}$, we denote by $ \mathcal{G}^{ D}_{\varepsilon}\tonde*{\Omega} $  the class of all the subsets of $\Omega$ obtained by translating, $\varepsilon$-dilating and rotating, according to the rotations in $\mathcal{G}$, the reference set $D$, and by $\mathcal G^D\tonde*{\Omega}=\bigcup_{ \varepsilon>0} \mathcal{G}^{ D}_{\varepsilon}\tonde*{\Omega}$ (see \cref{def:Gamma_G}). For every $ u \in L^{p}_{\mathrm{loc}}\tonde*{\mathbb{R}^{n};\mathbb{R}^{m}} $ we   define the following general \emph{BMO-type seminorm}
 \begin{equation*}
 G^{\alpha_{p}}_{ \varepsilon }\tonde*{ u,\Omega} 
 \coloneqq
 \varepsilon^{ n-p } \sup\limits_{\mathcal{G}_{\varepsilon}} \sum_{ D'\in\mathcal{G}_{\varepsilon}} \alpha_{ p}\tonde*{ u , D' } ,
 \end{equation*}
 where each $ \mathcal{G}_{\varepsilon} $ is a family made of pairwise disjoint elements of $ \mathcal{G}^{ D}_{\varepsilon}\tonde*{\Omega} $ 
 and the supremum is taken with respect to all of these families $ \mathcal{G}_{\varepsilon} $.
Our main result is the following.

\begin{theorem}[][thm:Main_BMO]
 Let $ \Omega \subseteq \mathbb{R}^{n} $ be an open set, $ m \geq1$ a natural number, $ p \in \left[1,\infty\right) $,
 $ \mathcal{G} $ a subgroup of $ \mathrm{SO}\tonde*{n} $ and $ D \subseteq \mathbb{R}^{n} $ be a bounded open set.
Let $ \alpha_{ p} $ be a $p$-core functional
 with domain
 $L^{p}_{\mathrm{loc}}\tonde*{\mathbb{R}^{n};\mathbb{R}^{m}} \times \mathcal{G}^{ D}\tonde*{\mathbb{R}^{n}} $ and $ u \in L^{p}_{\mathrm{loc}}\tonde*{\mathbb{R}^{n};\mathbb{R}^{m}} $.

 Then, if $ \nabla \restr{ u }{ \Omega } \in L^{p}\tonde*{\Omega;\mathbb{R}^{m\times n}} $,
 \begin{equation}\label{eq:tesi_Main_BMO}
 \lim\limits_{\varepsilon\to 0^{+}} G^{ \alpha_{ p}}_{ \varepsilon }\tonde*{ u,\Omega } = \int_{ \Omega } \psi_{\alpha_{p}}\tonde*{ \nabla u } \thinspace\mathrm{d} x ,
 \end{equation}
 where $\psi_{\alpha_{p}}$ is a convex and $p$-homogeneous function defined by
 \begin{equation*}
 \psi_{\alpha_{p}}\tonde*{A} \coloneqq
 \lim\limits_{\varepsilon\to 0^+}G^{ \alpha_{ p}}_{\varepsilon}\tonde*{ l^{ A} , \tonde*{-\frac{1}{2},\frac{1}{2}}^n } 
 \end{equation*} 
for every $ A \in \mathbb{R}^{m\times n}$ and
  $ l^{ A}\tonde*{ x } \coloneqq A x $ for every $ x \in \mathbb{R}^{n} $.

 Moreover, if $\restr{ u }{ \Omega }\in W^{1,p}_{\mathrm{loc}}\tonde*{\Omega;\mathbb{R}^{m}} $ and $ \psi_{\alpha_{p}} \not\equiv0$,
 \eqref{eq:tesi_Main_BMO} still holds.
\end{theorem}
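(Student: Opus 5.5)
The proof follows the scheme of \cite{FusMosSbo18-MR3816417,FarGuaSch20-MR4109099}, adapted to the abstract axioms of a $p$-core functional. We first settle the affine case. Fix $A\in\mathbb{R}^{m\times n}$ and write $f_A(E,\varepsilon)\coloneqq\varepsilon^{n-p}\sup_{\mathcal{G}_\varepsilon}\sum_{D'}\alpha_p(l^A,D')$, with families of sets contained in $E$. By translation invariance and the scaling built into the axioms (affine maps composed with $\Gamma$ transform $\alpha_p(l^A,\cdot)$ by the factor $\varepsilon^{n}\cdot\varepsilon^{p}\cdot\varepsilon^{-n}$ in the mean-oscillation normalisation), the quantity $f_A(Q_r,\varepsilon)$ on a cube $Q_r$ depends only on $r/\varepsilon$. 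It is superadditive with respect to disjoint unions of cubes, and \eqref{poinc} gives $f_A(Q_r,\varepsilon)\le c_1|A|^p r^n$. A subadditive-type argument in the spirit of Fekete then shows that $\lim_{\varepsilon\to0}\varepsilon^{n}\cdot(\text{count})$ exists. Concretely, we tile a large cube by small ones, compare, and use that the boundary layer has vanishing measure. This produces $\psi_{\alpha_p}(A)$ and $\lim_\varepsilon f_A(E,\varepsilon)=\psi_{\alpha_p}(A)|E|$ for every open $E$ with $|\partial E|=0$. Convexity and $p$-homogeneity of $\psi_{\alpha_p}$ are inherited from the corresponding axioms on $\alpha_p$. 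Since a supremum of sums of convex functionals is convex, they pass to the limit.

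Next we pass to general $u$ with $\nabla u\in L^p(\Omega)$. For the upper bound, \eqref{poinc} rescaled gives $\alpha_p(u,D')\le c_1\varepsilon^{p-n}\int_{D'}|\nabla u|^p$ for $D'\subset\Omega$, hence $G_\varepsilon(u,\Omega)\le c_1\|\nabla u\|^p_{L^p(\Omega)}$. Combined with convexity, this yields a continuity estimate
\[
G_\varepsilon(u,\Omega)^{1/p}\le G_\varepsilon(v,\Omega)^{1/p}+c_1^{1/p}\|\nabla(u-v)\|_{L^p(\Omega)},
\]
using that $\alpha_p(\cdot,D')^{1/p}$ is a seminorm, which follows from convexity and homogeneity. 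We therefore approximate $\nabla u$ in $L^p$ by piecewise-affine functions on a fine grid of cubes. On each cube $Q_i$, the contribution of sets $D'$ lying inside $Q_i$ is handled by the affine case. The sets $D'$ straddling cube boundaries lie in an $\varepsilon\operatorname{diam}D$-neighbourhood of the grid, so by the Poincaré-type bound their contribution is at most $c_1\int_{\text{layer}}|\nabla u|^p\to0$. For the lower bound we take, in each $Q_i\Subset\Omega$, a near-optimal family for $l^{A_i}$ and use the same continuity estimate. Together these give $\lim G_\varepsilon=\sum_i\psi(A_i)|Q_i|+o(1)$, and hence \eqref{eq:tesi_Main_BMO} once we note that $\psi$ is locally Lipschitz, being convex and finite.

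The main obstacle is the piecewise-affine approximation: $u$ itself is not affine on the cubes, only its gradient is approximated. We handle this by working with a locally Lipschitz smooth approximation and using, cube by cube, $\alpha_p(u,D')^{1/p}\le\alpha_p(l^{A_i},D')^{1/p}+\alpha_p(u-l^{A_i},D')^{1/p}$, bounding the last term by \eqref{poinc}. This requires the affine invariance axiom, namely that $\alpha_p$ vanishes on constants or translations, so that $u-l^{A_i}-c$ is what gets estimated. An exhaustion of $\Omega$ by compact cube unions handles unbounded and irregular $\Omega$, since the sup is monotone in the domain.

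For the final claim, where only $u\in W^{1,p}_{\mathrm{loc}}$ is assumed and $\psi\not\equiv0$, we must show $G_\varepsilon\to+\infty$ when $\int\psi(\nabla u)=\infty$. By the lower bound on every compact $K\Subset\Omega$, $\liminf G_\varepsilon(u,\Omega)\ge\int_K\psi(\nabla u)$. Letting $K\uparrow\Omega$ gives $\liminf G_\varepsilon\ge\int_\Omega\psi(\nabla u)$. If that integral is infinite we are done. If it is finite, we must first deduce $\nabla u\in L^p$. This is where $\psi\not\equiv0$ enters, and it is delicate when $\psi$ is degenerate, since convexity and $p$-homogeneity give $\psi\ge c|PA|^p$ only on a subspace. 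The plan is therefore to apply the lower bound directly. That bound holds without global integrability, and the upper bound is only needed when $\int\psi(\nabla u)<\infty$. There we would use the local version of the continuity estimate on compact sets plus a boundary term, which is what I expect to require the most care.
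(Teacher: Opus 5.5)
\emph{Main case} ($\nabla u|_\Omega\in L^p(\Omega;\mathbb{R}^{m\times n})$): your argument is correct and takes a different route from the paper's.

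The paper proceeds in three stages. It first treats bounded $\Omega$ and $u\in C^1(\overline\Omega)$: it covers the superlevel set $U_t=\{|\nabla u|_2>t\}$ by cubes on which $\nabla u$ oscillates less than $\sigma$ (\cref{res:cover_cubes}), freezes the gradient at the centres via \eqref{alpha_p:utile_2} and \eqref{alpha_p:poincare_+}, and disposes of the rest with \cref{res:tip_BMO_bound}. It then reaches $\nabla u\in L^p$ by density. Finally it treats arbitrary $\Omega$ through the inner regularity and $\sigma$-subadditivity of $G^{\alpha_p}_+(u,\cdot)$, quoted from \cite{FarFusGuaSch20-MR4062330}.

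You instead approximate $\nabla u$ directly by a field that is constant on grid cubes. On each $D'\subset Q_i$ you compare $u$ with $l^{A_i}$ through Minkowski's inequality in $\ell^p$ (legitimate, since $\alpha_p(\cdot,D')^{1/p}$ is a seminorm by \cref{res:null_cx_p_hom}) together with \eqref{alpha_p:poincare_+} applied to $u-l^{A_i}$. Sets crossing grid faces you bound by $c_1\int|\nabla u|_2^p$ over an $\varepsilon\operatorname{diam}(D)$-neighbourhood of the faces. This removes the $C^1$ step, the covering lemma and the density step, and it proves by hand the layer estimate that the paper imports as subadditivity. Your extra smooth approximation is unnecessary, since the gradient bound applies to $u-l^{A_i}$ as it stands.

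Two points need fixing:
\begin{enumerate}
\item Monotonicity in the domain only gives the lower bound. For the upper bound on unbounded or irregular $\Omega$, you must send the sets lying outside a finite union $K\Subset\Omega$ of grid cubes to the crude bound $c_1\int_{\Omega\setminus K}|\nabla u|_2^p\,\mathrm{d}x$.
\item The scaling is $G^{\alpha_p}_\varepsilon(l^A,Q(x;r))=r^nG^{\alpha_p}_{\varepsilon/r}(l^A,Q)$, not a function of $r/\varepsilon$ alone. The tiling step is $G^{\alpha_p}_\varepsilon(l^A,Q)\ge\lfloor\varepsilon_0/\varepsilon\rfloor^n(\varepsilon/\varepsilon_0)^nG^{\alpha_p}_{\varepsilon_0}(l^A,Q)$. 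This yields $\liminf_{\varepsilon\to0^+}G^{\alpha_p}_\varepsilon(l^A,Q)\ge\sup_{\varepsilon_0>0}G^{\alpha_p}_{\varepsilon_0}(l^A,Q)$; no counting of sets is involved.
\end{enumerate}

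\emph{Final claim} ($u|_\Omega\in W^{1,p}_{\mathrm{loc}}$, $\psi_{\alpha_p}\not\equiv0$): the genuine gap is the one you flag yourself. When $\nabla u|_\Omega\notin L^p$ but $\int_\Omega\psi_{\alpha_p}(\nabla u)\,\mathrm{d}x<\infty$, you have no upper bound.

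The paper's Step 4 rules this case out by claiming $\int_\Omega\psi_{\alpha_p}(\nabla u)\,\mathrm{d}x\ge C\int_\Omega|\nabla u|_2^p\,\mathrm{d}x$ with $C=\min\{\psi_{\alpha_p}(A):A\in\mathcal P,\ |A|_2=1\}$. But $\nabla u/|\nabla u|_2$ need not lie in $\mathcal P$, and \cref{res:psi_alphap_Pspace} only gives $\psi_{\alpha_p}(B)\ge C|\pi_{\mathcal P}(B)|_2^p$. So that step is valid only when $\mathcal N_{\psi_{\alpha_p}}=\{0\}$.

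No argument can close the case, because the assertion is false there. Here is a counterexample.
\begin{itemize}
\item Take $m=2$, $D=Q$, $\mathcal G=\{\mathrm{id}_{\mathbb{R}^n}\}$ and $\alpha_p(u,D')\coloneqq\beta(u_1,D')+\gamma(u_2,D')$, with $\beta$ the scalar functional of \cref{ex:fun_meno_media} and $\gamma$ the scalar one of \cref{ex:counterex_psi_null}.
\item All axioms pass to the sum; $\gamma\le\beta$ gives \eqref{alpha_p:poincare_type_bound}. Since $\gamma$ vanishes on linear maps, $\psi_{\alpha_p}(A)=\psi_\beta(A_1)$, with $A_1$ the first row. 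By \cref{res:fun_meno_media}, $\psi_\beta>0$ off $0$, so $\psi_{\alpha_p}\not\equiv0$.
\item Let $\Omega=(0,1)^n$, $\delta\in(n-p,n)\setminus\{0\}$, $g(x)\coloneqq|x|_2^{-\delta/p}$, and set $u\coloneqq g\,e_2$ on $\Omega$, $u\coloneqq0$ elsewhere. Then $u|_\Omega\in C^\infty(\Omega)$, $\nabla u|_\Omega\notin L^p$, and $\int_\Omega\psi_{\alpha_p}(\nabla u)\,\mathrm{d}x=0$.
\item Test with the single set $(0,\varepsilon)^n=\varepsilon Q+\frac{\varepsilon}{2}(1,\dots,1)$ and scale as in \cref{ex:counterex_psi_null}. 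This gives $G^{\alpha_p}_\varepsilon(u,\Omega)\ge\varepsilon^{n-p-\delta}\gamma(g,(0,1)^n)\to+\infty$, because $\gamma(g,(0,1)^n)>0$ ($g$ is not affine).
\end{itemize}

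So the last claim requires $\psi_{\alpha_p}>0$ on $\mathbb{R}^{m\times n}\setminus\{0\}$. It also survives, for instance, for a strong $p$-core functional with $\mathcal G=\{\mathrm{id}_{\mathbb{R}^n}\}$: there finiteness of $\int_\Omega\psi_{\alpha_p}(\nabla u)$ forces $\pi_{\mathcal P}(\nabla u)\in L^p$, and \cref{thm:Main_BMO_WL} applies. Under $\mathcal N_{\psi_{\alpha_p}}=\{0\}$ your reduction is complete and coincides with the paper's Step 4. Indeed, $\psi_{\alpha_p}\ge C|\cdot|_2^p$ forces $\int_\Omega\psi_{\alpha_p}(\nabla u)\,\mathrm{d}x=+\infty$, and the lower bound on $\Omega'\Subset\Omega$ plus exhaustion gives $G^{\alpha_p}_\varepsilon(u,\Omega)\to+\infty$.
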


In order to prove Theorem \ref{thm:Main_BMO}  we follow 
the strategy introduced in \cite[Theorem 4.1]{FarFusGuaSch20-MR4062330} and \cite[Theorem 1.2]{FarGuaSch20-MR4109099}.
We highlight that in the cases considered in the literature, the condition $ \psi_{\alpha_{p}} \not\equiv0$ is always satisfied (actually, $\psi_{\alpha_{p}}>0$ on $\mathbb{R}^{m\times n}\setminus\set*{0}$ ).
However, we remark that in general when $\restr{ u }{ \Omega }\in W^{1,p}_{\mathrm{loc}}\tonde*{\Omega;\mathbb{R}^{m}}$ the condition $\psi_{\alpha_p}\not\equiv 0$ is necessary.
Indeed,
 if $\restr{ u }{ \Omega }\in W^{1,p}_{\mathrm{loc}}\tonde*{\Omega;\mathbb{R}^{m}} $ with $ \nabla \restr{ u }{ \Omega } \notin L^{p}\tonde*{\Omega;\mathbb{R}^{m\times n}} $
 and $ \psi_{\alpha_{p}} \equiv0$ then \eqref{eq:tesi_Main_BMO} may or may not hold (see \cref{ex:trivial} and \cref{ex:counterex_psi_null}).

In \cref{thm:Main_BMO}, under suitable assumptions on the $p$-core functional, $\psi_{\alpha_{p}}$ can be written more explicitly as in \cref{res:formula_psi}. In \cref{sec:applications} we provide several examples for which  $\psi_{\alpha_p}$ can be computed explicitly. 

A further structural property of the energy arises by considering the decomposition of the matrix space  $\mathbb{R}^{m\times n}=\mathcal{P} \oplus \mathcal{N}_{\psi_{\alpha_{p}}} $, where $\mathcal N_{\psi_{\alpha_p}}=\{A\in \mathbb{R}^{m\times n} \,:\, \psi_{\alpha_p}(A)=0\}$ and $\mathcal{P}$ is a linear subspace of $\mathbb{R}^{m\times n}$
 (see \cref{res:psi_alphap_Pspace}).
Let $\pi_P:\mathbb{R}^{m\times n} \to \mathcal{P}$ denote the projection onto $\mathcal{P}$ associated to the direct sum decomposition. Then, for every  $ A \in \mathbb{R}^{m\times n} $, it holds
 \begin{equation*}
  \psi_{\alpha_{p}}\tonde*{ \pi_{\mathcal{P}}\tonde*{ A }}=\psi_{\alpha_{p}}\tonde*{ A }.
 \end{equation*}
 Moreover, the norm induced by  $\psi_{\alpha_{p}}$ is equivalent to the $L^p$-norm of  $\pi_{\mathcal P}(\nabla u)$. More precisely, there exist constants $c, C>0$ depending on $\alpha_p$ such that
 \[
 c\int_\Omega |\pi_{\mathcal P}(\nabla u)|_2^p\, \mathrm{d}x \leq \int_\Omega \psi_{\alpha_p}(\pi_{\mathcal P}(\nabla u))\, \mathrm{d}x\leq C\int_\Omega |\pi_{\mathcal P}(\nabla u)|_2^p\, \mathrm{d}x.
 \]
 If $\psi_{\alpha_{p}}>0$ on $ \mathbb{R}^{m\times n} \setminus\set*{0}$, then $\mathcal{N}_{\psi_{\alpha_p}}=\{0\}$ and the above inequality yields the equivalence
 \begin{equation*}
 \int_\Omega \psi_{\alpha_p}(\pi_{\mathcal P}(\nabla u))\, \mathrm{d}x \approx \int_\Omega |\nabla u|_2^p\, \mathrm{d}x.
 \end{equation*}
 In this case, assuming also a lower semicontinuity property on $\alpha_p$ we obtain the characterization
  \begin{equation}\label{intro:charat}
  \liminf_{\varepsilon\to 0^+} G^{ \alpha_{ p}}_\varepsilon \tonde*{ u,\Omega }<+\infty \quad\Longleftrightarrow\quad \nabla \restr{ u }{ \Omega } \in L^{p}\tonde*{\Omega;\mathbb{R}^{m\times n}}
 \end{equation}
 (see \cref{res:cor_Main_BMO2}). Examples of this situation are discussed in  \cref{ex:fun_meno_media} and \cref{ex:integ_doppio} (see also \cref{res:W1p_char}), where the functional $\alpha_p$ corresponds respectively to the $p$-mean oscillation functional 
 \[
 \alpha_p(u,D^\prime)= \strokedint_{D^\prime} \left|u(x)-\strokedint_{ D' } u\right|_2^p\, \mathrm{d}x,
 \]
 and to the functional
$$\alpha_p(u,D^\prime)=\strokedint_{D^\prime}\strokedint_{D^\prime} \left| u(x)-u(y)\right|_2^p\, \mathrm{d}x\, \mathrm{d}y.
 $$

 However, in general, the equivalence in \cref{intro:charat} does not hold. Nevertheless, thanks to \cref{res:cor_Main_BMO}, assuming a lower semicontinuity property on $\alpha_p$ and denoting by $u_\sigma=\rho_\sigma*u$ the standard mollification of $u$, it is still possible to obtain the estimate 
\begin{equation}\label{diseq}
 \begin{split}
  c\limsup\limits_{\sigma\to 0^{+}} \int_{ \tilde{\Omega}} \abs*{ \pi_{\mathcal{P}}\tonde*{ \nabla u_{\sigma}}}_{2}^{p} \thinspace\mathrm{d} x 
  &\leq
   \lim_{\varepsilon \to 0^+}G_{\varepsilon}^{ \alpha_{ p}}\tonde*{ u_\sigma,\tilde\Omega } 
 \\&\leq
 \liminf_{\varepsilon\to 0^+} G^{ \alpha_{ p}}_{\varepsilon}\tonde*{ u,\Omega } 
  \leq 
  C \int_\Omega |\nabla u|^p_2\, \mathrm{d}x,
  \end{split}
 \end{equation}
 where $ \tilde{\Omega}\subset\mathbb{R}^n$ is an open set such that $ \tilde{\Omega} \Subset \Omega $.
 Assuming $\liminf_{\varepsilon\to 0^+}G^{ \alpha_{ p}}_\varepsilon\tonde*{ u,\Omega }<+\infty$, since $ u_{\sigma} \to u $ in $ L^{p}_{\mathrm{loc}}\tonde*{\mathbb{R}^{n};\mathbb{R}^{m}} $ as $\sigma\to 0^{+} $, one can only conclude that 
 $\pi_{\mathcal{P}}\tonde*{\nabla \restr{ u_{\sigma}}{\Omega}} \to D_{\pi_{\mathcal{P}}}\tonde*{\restr{u}{\Omega }}$  as $\sigma\to 0^{+}$ in the sense of distributions (here $D_{\pi_{\mathcal{P}}}\tonde*{\restr{u}{\Omega}}$ is the distribution associated to the projection of the gradient according to $\pi_\mathcal{P}$, see  \cref{sec:notation} for the precise definition).
 This leads to the implication
  \begin{equation}\label{unaimpl}
  \liminf_{\varepsilon\to 0^+}G^{ \alpha_{ p}}_\varepsilon\tonde*{ u,\Omega }<+\infty
 \quad\Longrightarrow\quad
  D_{ \pi_{\mathcal{P}}}\tonde*{\restr{ u }{ \Omega }} \in L^{p}\tonde*{\Omega;\mathbb{R}^{m\times n}}. 
  \end{equation}
  The converse implication does not hold in general, as shown in \cref{ex:counterex_psi_null},
  \[
  \alpha_p(u,D^\prime)=  \inf\limits_{ A \in \mathbb{R}^{m\times n}} 
  \strokedint_{ D'} \abs*{ u\tonde*{ x }-A \tonde*{ x-\operatorname{bar}\tonde*{ D' }}-\strokedint_{ D' } u
  }^{ p }_{2} \thinspace\mathrm{d} x ,
  \]
  where $\operatorname{bar}\tonde*{ D' }$ is the barycenter of $D'$. 
  Indeed, in this case $\lim_{\varepsilon\to 0^+}G^{ \alpha_{ p}}_\varepsilon\tonde*{ u,\Omega }=+\infty$ but $\psi_{\alpha_p}\equiv 0$ implies automatically $D_{\pi_\mathcal{P}}(u)=0\in L^p(\Omega,\mathbb{R}^{m\times n})$.

 To recover the reverse implication in \eqref{unaimpl}, we introduce the concept of \emph{strong} $p$-core functional, i.e. a $p$-core functional satisfying a  stronger version of \eqref{poinc}. More precisely, we require that 
  \[
  \exists  c_{2} >0
 \text{ s.t. }
 \alpha_{ p}\tonde*{ u , D } 
 \leq
 c_{2} \int_{ D }\abs*{D_{\pi_{\mathcal{P}}} u }^{ p }_{2} \thinspace\mathrm{d} x 
 \text{ }
 \forall u \in W^{\pi_{\mathcal{P}},p}\tonde*{D;\mathbb{R}^{m}},
  \]
where $W^{\pi_{\mathcal{P}},p}\tonde*{D;\mathbb{R}^{m}}=\set*{ u \in L^{p}\tonde*{D;\mathbb{R}^{m}} : D_{\pi_{\mathcal{P}}} u \in L^{p}\tonde*{D;\mathbb{R}^{m\times n}}}$.
  Under this assumption, the upper bound in the previous estimate \eqref{diseq} improves to 
  \[
  \liminf_{\varepsilon\to 0^+}G^{ \alpha_{ p}}_{\varepsilon}\tonde*{ u,\Omega } 
  \leq 
  c_2 \int_\Omega |D_{\pi_\mathcal{P}} u|^p_2\, \mathrm{d}x,
  \]
  which implies that 
  \begin{equation}\label{dueimpl}
      D_{ \pi_{\mathcal{P}}}\tonde*{\restr{ u }{ \Omega }} \in L^{p}\tonde*{\Omega;\mathbb{R}^{m\times n}} \qquad \Longrightarrow\qquad \liminf_{\varepsilon\to 0^+}G^{ \alpha_{ p}}_\varepsilon\tonde*{ u,\Omega }<+\infty
  \end{equation}
  (see \cref{res:cor_Main_BMO2_WL}).

  In this framework we obtain the following analogue of \cref{thm:Main_BMO}. 
 
\begin{theorem}[][thm:Main_BMO_WL]
 Let $ \Omega \subseteq \mathbb{R}^{n} $ be an open set, $ m \geq1$ a natural number, $ p \in \left[1,\infty\right) $,
 $ \mathcal{G} =\set*{ \operatorname{id}_{\mathbb{R}^{n}}}$ and $ D \subseteq \mathbb{R}^{n} $ be a bounded open set.
 Let $ \alpha_{ p} $ be a strong $p$-core functional
 with domain
 $ L^{p}_{\mathrm{loc}}\tonde*{\mathbb{R}^{n};\mathbb{R}^{m}} \times \mathcal{G}^{ D}\tonde*{\mathbb{R}^{n}} $ and $ u \in L^{p}_{\mathrm{loc}}\tonde*{\mathbb{R}^{n};\mathbb{R}^{m}} $.

 Then, if $ D_{ \pi_{\mathcal{P}}}\tonde*{\restr{ u }{ \Omega }} \in L^{p}\tonde*{\Omega;\mathbb{R}^{m\times n}} $,
 \begin{equation}\label{eq:tesi_Main_BMO_WL}
 \lim\limits_{\varepsilon\to 0^{+}} G^{ \alpha_{ p}}_{ \varepsilon }\tonde*{ u,\Omega } = \int_{ \Omega } \psi_{\alpha_{p}}\tonde*{ D_{ \pi_{\mathcal{P}}}\tonde*{\restr{ u }{ \Omega }}} \thinspace\mathrm{d} x .
 \end{equation}

 Moreover, if $\restr{ u }{ \Omega }\in W^{\pi_{\mathcal{P}},p}_{\mathrm{loc}}\tonde*{\Omega;\mathbb{R}^{m}} $ and $ \psi_{\alpha_{p}} \not\equiv0$,
 \eqref{eq:tesi_Main_BMO_WL} still holds.
\end{theorem}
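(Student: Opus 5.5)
The plan is to repeat the proof of \cref{thm:Main_BMO}, which follows the scheme of \cite[Theorem 4.1]{FarFusGuaSch20-MR4062330} and \cite[Theorem 1.2]{FarGuaSch20-MR4109099}. Wherever that proof used the full gradient $\nabla u$ and the bound \eqref{poinc}, we use instead $D_{\pi_{\mathcal{P}}}u$ and the strong bound with constant $c_2$. The hypothesis $\mathcal{G}=\set*{\operatorname{id}_{\mathbb{R}^n}}$ is what makes this work: translations and dilations commute with the linear operator $D_{\pi_{\mathcal{P}}}$. Rotations would not preserve the decomposition $\mathcal{P}\oplus\mathcal{N}_{\psi_{\alpha_p}}$.

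\textbf{Step 1 (upper bound).} By translation invariance and homogeneity under dilation, the strong inequality on $D$ transfers to every $D'=x_0+\varepsilon D$ as $\alpha_p(u,D')\le c_2\,\varepsilon^{p-n}\int_{D'}|D_{\pi_{\mathcal{P}}}u|_2^p$. Summing over a disjoint family gives $G^{\alpha_p}_\varepsilon(u,\Omega)\le c_2\int_\Omega|D_{\pi_{\mathcal{P}}}u|_2^p$. The right-hand side depends only on $D_{\pi_{\mathcal{P}}}u$, so the map $u\mapsto G_\varepsilon^{\alpha_p}(u,\Omega)^{1/p}$ is Lipschitz, uniformly in $\varepsilon$, with respect to the $L^p$ norm of $D_{\pi_{\mathcal{P}}}u$. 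This uses convexity and $p$-homogeneity of $\alpha_p$ to get a triangle-type inequality $\alpha_p(u+v,D')^{1/p}\le \alpha_p(u,D')^{1/p}+\alpha_p(v,D')^{1/p}$, and Minkowski on the sums.

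\textbf{Step 2 (piecewise affine and local argument).} Take $u$ with $D_{\pi_{\mathcal{P}}}u\in L^p(\Omega)$. Mollify, setting $u_\sigma=\rho_\sigma*u$ on $\tilde\Omega\Subset\Omega$. The operator $D_{\pi_{\mathcal{P}}}$ is a constant-coefficient linear differential operator, so it commutes with convolution, and $D_{\pi_{\mathcal{P}}}u_\sigma\to D_{\pi_{\mathcal{P}}}u$ in $L^p_{\mathrm{loc}}$.

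Then cover $\tilde\Omega$ by small cubes $Q_i$. On each cube, compare $u_\sigma$ with an affine map $l^{A_i}$, where $A_i$ is chosen so that $\pi_{\mathcal{P}}(A_i)$ approximates the average of $D_{\pi_{\mathcal{P}}}u_\sigma$. The fact $\psi_{\alpha_p}(A)=\psi_{\alpha_p}(\pi_{\mathcal{P}}A)$ (\cref{res:psi_alphap_Pspace}) lets us work only with the $\mathcal{P}$-component. The error $u_\sigma-l^{A_i}$ has small $D_{\pi_{\mathcal{P}}}$ in $L^p$, so Step 1 controls it.

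Additivity of $\lim G_\varepsilon$ over disjoint cubes holds up to boundary layers. Those layers are negligible because Step 1 bounds contributions from sets near $\partial Q_i$ by the integral of $|D_{\pi_{\mathcal{P}}}u|^p$ over a thin neighbourhood. Combined with the definition of $\psi_{\alpha_p}$ and its scaling and translation invariance on cubes, this yields the liminf and limsup bounds. Letting the cubes shrink, $\sigma\to0$ and $\tilde\Omega\uparrow\Omega$ gives \eqref{eq:tesi_Main_BMO_WL}. Continuity of $\psi_{\alpha_p}$, which follows from convexity together with the $p$-homogeneous bound $\psi_{\alpha_p}(A)\le c_2|\pi_{\mathcal{P}}A|^p$, gives the convergence of the integrals.

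\textbf{Step 3 (the local case).} Suppose $u\in W^{\pi_{\mathcal{P}},p}_{\mathrm{loc}}$ but $D_{\pi_{\mathcal{P}}}u\notin L^p(\Omega)$, and $\psi_{\alpha_p}\not\equiv0$. The right-hand side is then $+\infty$ by the lower bound $c\int|\pi_{\mathcal{P}}(\cdot)|^p$ from \cref{res:psi_alphap_Pspace}. For the left-hand side, apply the already proved case on each $\tilde\Omega\Subset\Omega$ together with monotonicity $G_\varepsilon(u,\tilde\Omega)\le G_\varepsilon(u,\Omega)$. This gives $\liminf_\varepsilon G_\varepsilon(u,\Omega)\ge\int_{\tilde\Omega}\psi_{\alpha_p}(D_{\pi_{\mathcal{P}}}u)$, which tends to $+\infty$ as $\tilde\Omega\uparrow\Omega$.

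The main obstacle is Step 2, specifically the liminf direction near the boundary of $\Omega$. One must check that the approximation argument of \cref{thm:Main_BMO} never uses full-gradient control, for instance through a Poincaré inequality on $u$ itself. Every such use has to be replaced by the strong core inequality applied to differences $u-l^{A}$. I would first verify that the subadditivity and localization lemmas behind \cref{thm:Main_BMO} rely only on a bound of the form $\alpha_p\le C\int|Tu|^p$ for a linear operator $T$ commuting with translations and dilations.
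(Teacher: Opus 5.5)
Your proposal is correct and follows essentially the paper's route. The shared steps are: approximate $u$ on $\Omega'\Subset\Omega$ by smooth maps in the $W^{\pi_{\mathcal{P}},p}$ sense; control $\alpha_p(u-v_\sigma,D')$ with the rescaled strong bound \eqref{alpha_p:poincare_+_Pspace}; pass to $\Omega'\uparrow\Omega$ through the inner regularity of $G^{\alpha_p}_+$, which, as you suspected, only needs \eqref{eq:utile}; and handle the local case exactly as in Step 4 of \cref{thm:Main_BMO}.

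One difference is that the paper does not redo the cube covering for the approximants. It applies \cref{thm:Main_BMO} directly to the smooth $v_\sigma$ and uses $\psi_{\alpha_p}(\nabla v_\sigma)=\psi_{\alpha_p}(\pi_{\mathcal{P}}\nabla v_\sigma)=\psi_{\alpha_p}(D_{\pi_{\mathcal{P}}}v_\sigma)$. Also, the delicate direction is the limsup, which needs this reinforced localization; the liminf only uses the elementary monotonicity $G^{\alpha_p}_-(u,\Omega)\geq G^{\alpha_p}_-(u,\Omega')$, so it is not the main obstacle you identify.
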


We remark that in \cref{thm:Main_BMO_WL} we assume that $ \mathcal{G} =\set*{ \operatorname{id}_{\mathbb{R}^{n}}}$ because, a priori, the differential operator $ D_{ \pi_{\mathcal{P}}}$ imposes more rigidity conditions than the canonical distributional gradient
$\nabla$.

In \cref{sec:applications} we present several examples of $p$-core and strong $p$-core functionals.   We observe that when $\mathcal{N}_{\psi_{\alpha_p}}=\{0\}$ the two notions coincide, while if $\mathcal{N}_{\psi_{\alpha_p}}=\mathbb{R}^{m\times n}$ no strong $p$-core functional exists (except the trivial case of $\alpha_p\equiv0$ in \cref{ex:trivial}). Intermediate situations are illustrated in \cref{ex:inf_antisym}, where
\[
\alpha_{ p}\tonde*{ u , D' } 
  \coloneqq
  \inf\limits_{ A \in \mathbb{R}^{n\times n}_{\mathrm{skew}}} 
  \strokedint_{ D'} \abs*{ u\tonde*{ x }-A \tonde*{ x-\operatorname{bar}\tonde*{ D' } }-\strokedint_{ D' } u}^{ p }_{2} \thinspace\mathrm{d} x,
\]
and one finds that
\[
\mathcal{N}_{\psi_{\alpha_p}}=\ \mathbb{R}^{n\times n}_{\mathrm{skew}}, \qquad D_{\pi_{\mathcal{P}}}( \restr{ u }{ \Omega })=\mathcal{E}u.
\]
As a consequence in this case thanks to \eqref{unaimpl} and \eqref{dueimpl} we are able to obtain the following  (cfr. \cref{res:WL_antisym}):
if $ p \in \left(1,\infty\right) $, for every $ u \in L^{p}\tonde*{\Omega;\mathbb{R}^{n}} $,
 \begin{equation*} 
 \liminf\limits_{\varepsilon\to 0^{+}} G^{\alpha_{p}}_{ \varepsilon }\tonde*{ u,\Omega } <+\infty\iff u \in E^{1,p}\tonde*{\Omega;\mathbb{R}^{n}},
 \end{equation*}
where the space $E^{1,p}(\Omega;\mathbb{R}^n)\coloneqq\set*{ u \in L^{p}\tonde*{\Omega;\mathbb{R}^{n}} : \mathcal{E} u \in L^{p}\tonde*{\Omega;\mathbb{R}^{n\times n}}}$.  

Finally, as a consequence of \cref{thm:Main_BMO} and \cref{thm:Main_BMO_WL}, the detour in the characterization of Sobolev spaces leads us to obtain a version of the so-called "Constancy Theorem" in the spirit of  \cite{Bre02-MR1942116}. This class of results is presented as open problem in \cite{Bre02-MR1942116} and it is instrumental in related topics  such as the degree theory for classes of discontinuous maps. 

In our abstract setting for a connected open set $\Omega$, we obtain for $p\in(1,\infty)$ (see \cref{res:constancy}) that assuming a lower semicontinuity property on a strong $p$-core functional $\alpha_p$ then
\[
\liminf_{\varepsilon\to 0^+}G_\varepsilon^{ \alpha_{ p}}\tonde*{ u,\Omega } = 0 \quad\Longrightarrow\quad D_{\pi_\mathcal{P}}(\restr{ u }{ \Omega })=0.
\]
Thus, if $\psi_{\alpha_p}>0$ in $\mathbb{R}^{m\times n}$, then $u$ is constant a.e. on $\Omega$. On the other hand if we consider the $p$-core functional {} in \cref{ex:inf_antisym} for $ p \in \left(1,\infty\right) $ we obtain that if $ u \in L^{p}_{\mathrm{loc}}\tonde*{\mathbb{R}^{n};\mathbb{R}^{n}} $ satisfies $\liminf_{\varepsilon\to 0^+} G^{ \alpha_{ p}}_{\varepsilon}\tonde*{ u,\Omega }=0$,
then $\restr{ u }{ \Omega }\in E^{1,p}\tonde*{\Omega;\mathbb{R}^{n}}$ and $ \mathcal{E}\tonde*{\restr{ u }{ \Omega }}=0$. Then, by the Korn-Poincaré inequality \eqref{korn-poincare}
we find that $u(x)$ identify a rigid motion, i.e. $ u\tonde*{ x }=A x+h $ for every $ x \in \mathbb{R}^{n} $, where $ A \in \mathbb{R}^{n\times n}_{\mathrm{skew}} $ and $ h \in \mathbb{R}^{n} $.

\subsection{Plan of the paper}
The plan of the paper is the following.

In \cref{sec:notation} we set the notation and we collect preliminary results.

In \cref{sec:genaralBMO} 
we introduce the notion of $p$-core functional (cfr. \cref{def:core_function}) and
define the associated 
one-parameter family of generalized BMO-type seminorms
(see \cref{def:tipo_BMO}).
After establishing the main properties of these 
generalized BMO-type seminorms,
for any $p$-core functional $\alpha_p$
we define the $p$-homogeneous and convex integrand
$\psi_{\alpha_p}$
(see \cref{def:psi_alphap}) and we study its main properties.
Then, we prove \cref{thm:Main_BMO} and,
after introducing the concept of strong $p$-core functional (cfr. \cref{def:strong_core_function}), \cref{thm:Main_BMO_WL}.

In \cref{sec:characterizations},
we discuss some consequences of \cref{thm:Main_BMO} and \cref{thm:Main_BMO_WL}.
We first prove \cref{res:cor_Main_BMO2}, involving $p$-core functionals, highlighting that the implication
in its thesis cannot be reversed in general
(cfr. \cref{res:optimality}).
Therefore, after reinforcing its hypotheses,
we then prove \cref{res:cor_Main_BMO2_WL}, the characterization result involving strong $p$-core functionals.
We finally deduce the constancy theorem \cref{res:constancy}.

In \cref{sec:applications},
we present several examples 
of (strong) $p$-core functionals
to which we apply the results proved in \cref{sec:genaralBMO} and \cref{sec:characterizations}.
In particular, we show how these examples can be used to characterize through a non-distributional approach the spaces $W^{1,p}(\Omega;\mathbb{R}^m)$ and 
$E^{1,p}(\Omega;\mathbb{R}^n)$ (see \cref{res:W1p_char} and \cref{res:WL_antisym}).

\section{Notation and preliminary results}\label{sec:notation}

We fix some notation. We denote by $\cardinality[ S ]$ the cardinality of a set $ S $. We denote by $ \abs*{\cdot}_{2} $ the Euclidean norm of $ \mathbb{R}^{n} $ and if $ x , y \in \mathbb{R}^{n} $ their standard scalar product
is denoted by $ x \cdot y $. Given $ x \in \mathbb{R}^{n} $ and $ r >0$, we let $ B_{r}\tonde*{x} $ be the Euclidean ball of radius $ r $ centered at $x$,  $ Q \coloneqq \left(-\frac{1}{2},\frac{1}{2}\right) ^ n $ and $ Q\tonde*{x;r} \coloneqq r Q + x $.
If $ \Omega \subseteq \mathbb{R}^{n} $ is an open set, we denote by $ \mathcal{A}_{\Omega} $
the family of all the open subsets of $ \Omega $.

Let $ \mathrm{Aff}\tonde*{\mathbb{R}^{n}} $ be the affine group of the Euclidean space $ \mathbb{R}^{n} $, i.e. the
set of all invertible affine transformations from $ \mathbb{R}^{n} $ into itself.

We denote by $ \mathbb{R}^{m\times n} $ the space of real valued $ m \times n $ matrices, by $ \mathbb{R}^{n\times n}_{\mathrm{skew}} $,  the space of real valued skew-symmetric $ n \times n $ matrices, by $ \mathrm{O}\tonde*{n} \subset \mathbb{R}^{n\times n} $ the group of all orthogonal matrices
and by $ \mathrm{SO}\tonde*{n} \subset \mathrm{O}\tonde*{n} $ the group of all orthogonal matrices with determinant equal to $1$.

If $ \mathcal{V}_{1} , \mathcal{V}_{2} $ are linear subspaces of $ \mathbb{R}^{m\times n} $, we write $ \mathbb{R}^{m\times n} = \mathcal{V}_{1} \oplus \mathcal{V}_{2} $
if $ \mathbb{R}^{m\times n} = \mathcal{V}_{1} + \mathcal{V}_{2} $ and $ \mathcal{V}_{1} \cap \mathcal{V}_{2} =\set*{0}$.

For $ A \in \mathbb{R}^{m\times n} $ we put,
\begin{equation*}
 \abs*{ A }_{2} 
 \coloneqq
 \sup\limits_{ } \set*{ \abs*{ A x }_{2} : x \in \mathbb{R}^{n} , \abs*{ x }_{2} \leq1}.
\end{equation*}
Given $ A \in \mathbb{R}^{m\times n} $, we denote the linear function associated to $ A $
by $ l^{ A}\tonde*{ x } \coloneqq A x $ for every $ x \in \mathbb{R}^{n} $.

It easy to verify that for $ A , B \in \mathbb{R}^{m\times n} $ and $ p \in \left[1,\infty\right) $ it holds,
\begin{equation}\label{eq:differenze_p}
 \abs*{ \abs*{ A }^{ p }_{2} - \abs*{ B }^{ p }_{2}}\leq p \tonde*{ \abs*{ A }_{2} + \abs*{ B }_{2}}^{ p -1} \abs*{ A - B }_{2} ,
\end{equation}
and for $ A , B \in \mathbb{R}^{m\times n} \setminus\set*{0}$ it holds,
\begin{equation}\label{eq:diff_normalizzati}
 \abs*{ \frac{ A }{ \abs*{ A }_{2}}-\frac{ B }{ \abs*{ B }_{2}}}_{2} 
 \leq2
 \frac{
 \abs*{ A - B }_{2} 
 }{
 \abs*{ A }_{2} 
 }.
\end{equation}

We denote by $ \mathcal{L}^{n} $ the Lebesgue measure on $ \mathbb{R}^{n} $
and for a (Lebesgue) measurable set $ S \subseteq \mathbb{R}^{n} $
we put $ \abs*{ S } \coloneqq \mathcal{L}^{n}\tonde*{ S } $.
For a measurable set $ S \subseteq \mathbb{R}^{n} $ of finite and strictly positive measure
and an integrable function $ u : S \to \mathbb{R}^{d} $ we put,
\begin{equation*}
 u_{ S } 
 =
 \strokedint_{ S } u \thinspace\mathrm{d} x 
 \coloneqq
 \frac{1}{ \abs*{ S }} \int_{ S } u \thinspace\mathrm{d} x .
\end{equation*}

If $ S $ is a measurable set of strictly positive and finite measure,
we define its \emph{barycenter} by,
\begin{equation*}
 \operatorname{bar}\tonde*{ S } \coloneqq \strokedint_{ S } x \thinspace\mathrm{d} x .
\end{equation*}

Throughout the paper we will always denote by $ \graffe*{ \rho _{ \sigma }}_{ \sigma >0} \subset C^{\infty}_{c}\tonde*{\mathbb{R}^{n}} $
a family of standard radial mollifiers, i.e.,
\begin{equation*}
 \rho _{ \sigma }\tonde*{ x } \coloneqq \sigma ^{- n } \rho \tonde*{ x / \sigma } \text{ }\forall x \in \mathbb{R}^{n} ,
\end{equation*}
where $ \rho \in C^{\infty}_{c}\tonde*{\mathbb{R}^{n}} $ is a standard radial mollifier with $ \operatorname{supp}\tonde*{ \rho } \subset B_{1}\tonde*{0} $.

The following lemma is a useful Jensen-type inequality.
\begin{lemma}[][res:jensen_alpha_p]
 Let $ p \in \left[1,\infty\right) $, $ \sigma >0$, $ u \in L^{p}\tonde*{\mathbb{R}^{n};\mathbb{R}^{m}} $ and $ f : L^{p}\tonde*{\mathbb{R}^{n};\mathbb{R}^{m}} \to \left[0,+\infty\right) $
 be a proper convex and lower semicontinuous function.
 Then,
 \begin{equation}\label{eq:jensen_thesis}
 f\tonde*{ u_{ \sigma }} \leq \int_{ B_{1}\tonde*{0}} f\tonde*{ u\tonde*{\cdot- \sigma y }} \rho \tonde*{ y } \thinspace\mathrm{d} y ,
 \end{equation}
 where $ u_{ \sigma } \coloneqq \rho _{ \sigma } \ast u $.
\end{lemma}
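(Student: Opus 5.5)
The plan is to read $u_\sigma$ as a Bochner integral in $L^p(\mathbb{R}^n;\mathbb{R}^m)$ of the translates $\tau_{\sigma y}u \coloneqq u(\cdot-\sigma y)$ against the probability measure $\mu \coloneqq \rho(y)\,\mathrm{d}y$ on $B_1(0)$, and then to apply Jensen's inequality for convex lower semicontinuous functionals on a Banach space. First, the map $y\mapsto \tau_{\sigma y}u$ is continuous from $B_1(0)$ into $L^p$, by continuity of translations in $L^p$ for $p<\infty$. It is also bounded, since $\|\tau_{\sigma y}u\|_{p}=\|u\|_p$. Hence it is Bochner integrable with respect to $\mu$. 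The Bochner integral $\int_{B_1(0)}\tau_{\sigma y}u\,\rho(y)\,\mathrm{d}y$ coincides with $\rho_\sigma\ast u$: pairing against any $\varphi\in L^{p'}$ and using Fubini gives $\int\varphi\,\rho_\sigma\ast u$ after the change of variables $z=\sigma y$. Since $y\mapsto f(\tau_{\sigma y}u)$ is lower semicontinuous, hence Borel measurable, and nonnegative, the right-hand side of \eqref{eq:jensen_thesis} is well defined in $[0,+\infty]$.

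Next I invoke the Hahn--Banach characterization of proper convex lower semicontinuous functions. Namely,
\[
f(v)=\sup\{\,\ell(v)+c : \ell\in (L^p)^*,\ c\in\mathbb{R},\ \ell+c\le f\,\}.
\]
This holds because the epigraph of $f$ is closed and convex in $L^p\times\mathbb{R}$, so it is the intersection of the closed half-spaces containing it. Fix any such affine minorant $\ell+c$. By linearity and continuity of $\ell$, it commutes with the Bochner integral, and $\mu$ is a probability measure. Therefore
\[
\ell(u_\sigma)+c=\int_{B_1(0)}\bigl(\ell(\tau_{\sigma y}u)+c\bigr)\rho(y)\,\mathrm{d}y\le \int_{B_1(0)} f(\tau_{\sigma y}u)\,\rho(y)\,\mathrm{d}y .
\]
Taking the supremum over all affine minorants gives \eqref{eq:jensen_thesis}.

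The main technical point is justifying that the convolution equals the Bochner integral, together with the affine-minorant representation. Both are standard. The first uses continuity of translations in $L^p$ and a duality check. The second is Fenchel--Moreau, equivalently Hahn--Banach separation of a point strictly below the epigraph. Properness is needed for the second, and since $f\ge 0$ is finite-valued it holds trivially. Alternatively, one could avoid the minorant representation by approximating the Bochner integral by Riemann-type convex combinations $\sum_i \mu(E_i)\tau_{\sigma y_i}u$, using finite convexity and lower semicontinuity, but the minorant route is cleaner.
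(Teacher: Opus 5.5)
Your proof is correct and follows the same route as the paper. Both write $u_\sigma$ as the vector-valued integral of the translates $u(\cdot-\sigma y)$ against $\rho(y)\,\mathrm{d}y$, identify it with $\rho_\sigma\ast u$ by pairing with $L^{p'}$ and using Fubini, and then apply Jensen's inequality in $L^p$. The differences are minor: you use the Bochner integral instead of the Pettis integral, and you prove Jensen directly from the affine-minorant (Hahn--Banach) representation of $f$, whereas the paper cites an abstract Jensen inequality for Pettis integrals in evenly convex spaces, so your argument is slightly more self-contained.
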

\begin{proof}
 Let $ \sigma >0$ and $ u_{ \sigma } \coloneqq \rho _{ \sigma } \ast u \in L^{p}\tonde*{\mathbb{R}^{n};\mathbb{R}^{m}} $.
 We consider the probability space $\tonde*{ \mathbb{R}^{n} , \mathcal{A} , \mu }$,
 where $ \mu \coloneqq \rho \mathcal{L}^{n} = \rho \mathcal{L}^{n} \mres B_{1}\tonde*{0} $ and $ \mathcal{A} $ is the $ \sigma $-algebra
 of Lebesgue measurable sets of $ \mathbb{R}^{n} $.

 We also define $ X \coloneqq L^{p}\tonde*{\mathbb{R}^{n};\mathbb{R}^{m}} $ and consider the measurable space 
 $\tonde*{ X , \mathcal{B}}$, where $ \mathcal{B} $ is the 
 $ \sigma $-algebra of Borel sets of $ X $.
 We recall that $ X $, being a Banach space, is \emph{evenly convex}
 (see the definition for instance in \cite[Sec. 2]{Ves17-MR3742489}).
 We define,
 \begin{align*}
 G : \mathbb{R}^{n} &\longrightarrow X 
 \\
 y &\mapsto u\tonde*{.- \sigma y } .
 \end{align*}
 Moreover, thanks to the $ L^{p} $-continuity of translations, $ G $ is continuous.
 This then implies that $ G :\tonde*{ \mathbb{R}^{n} , \mathcal{A}}\to\tonde*{ X , \mathcal{B}}$
 is measurable.
 We consider the following \emph{Pettis integral}
 (see the definition for instance in \cite[Sec. 2]{Ves17-MR3742489}),
 \begin{equation}\label{pettis}
 \int_{ \mathbb{R}^{n}} G \thinspace\mathrm{d} \mu \in X .
 \end{equation}
 In particular \eqref{pettis} is well-defined, since for every $ \phi \in X^{\prime} \simeq L^{ p^{\prime}}\tonde*{\mathbb{R}^{n};\mathbb{R}^{m}} $
 the map $ \phi \circ G :\tonde*{ \mathbb{R}^{n} , \mathcal{A}}\to \mathbb{R} $ is Lebesgue $\mu$-integrable
 and there exists $ v \in X $ such that
 \begin{equation*}
 \int_{ \mathbb{R}^{n}} \phi \circ G \thinspace\mathrm{d} \mu = \phi \tonde*{ v } \text{ }\forall\phi\in X^{\prime}.
 \end{equation*}
 Indeed, fixed $ \phi \in X^{\prime} $ (and letting $ \tilde{ \phi } \in L^{ p^{\prime}}\tonde*{\mathbb{R}^{n};\mathbb{R}^{m}} $ be the function representing $ \phi $)
 for every $ y \in \mathbb{R}^{n} $,
 \begin{align*}
 \tonde*{ \phi \circ G }\tonde*{ y }
 =
 \int_{ \mathbb{R}^{n}} \tilde{ \phi } \cdot G\tonde*{ y } \thinspace\mathrm{d} \mathcal{L}^{n} 
 =
 \int_{ \mathbb{R}^{n}} \tilde{ \phi }\tonde*{ x } \cdot u\tonde*{ x - \sigma y } \thinspace\mathrm{d} x .
 \end{align*}
 Moreover, applying Tonelli's theorem
 \begin{align*}
 \int_{ \mathbb{R}^{n}} \tonde*{ \phi \circ G }\tonde*{ y } \thinspace\mathrm{d} \mu\tonde*{ y } 
 =
 \int_{ \mathbb{R}^{n}} \int_{ \mathbb{R}^{n}} \tilde{ \phi }\tonde*{ x } \cdot u\tonde*{ x - \sigma y } \rho \tonde*{ y } \thinspace\mathrm{d} x \mathrm{d} y 
 =
 \int_{ \mathbb{R}^{n}} \tilde{ \phi }\tonde*{ x } \cdot u_{ \sigma }\tonde*{ x } \thinspace\mathrm{d} x .
 \end{align*}
 This implies that 
 \begin{equation}\label{pettis2}
 \int_{ \mathbb{R}^{n}} G \thinspace\mathrm{d} \mu = u_{ \sigma } \in X .
 \end{equation}

 By Jensen's inequality (\cite[Thm. 3]{Ves17-MR3742489}), we get that
 \begin{equation}\label{jensen}
 f\tonde*{ \int_{ \mathbb{R}^{n}} G \thinspace\mathrm{d} \mu } \leq \int_{ \mathbb{R}^{n}} f \circ G \thinspace\mathrm{d} \mu .
 \end{equation}
 Combining \eqref{pettis2} and \eqref{jensen} we obtain \eqref{eq:jensen_thesis}.
\end{proof}

\begin{lemma}[][res:null_cx_p_hom]
Let $ X $ be a real vector space, $ p\geq1$ and $ f : X \to \left[0,+\infty\right) $
be a convex and $ p $-homogeneous function:
\begin{equation}\label{dsa980h}
 f\tonde*{ t u } =\abs{ t }^{ p } f\tonde*{ u } 
 \text{ }
 \forall u \in X ,\forall t \in \mathbb{R} .
 \end{equation} 
Then, the set
 \begin{equation*}
 \mathcal{N}_{f} \coloneqq\set*{ u \in X : f\tonde*{ u } =0},
 \end{equation*}
 is a linear subspace of $ X $. 
Moreover,
 \begin{equation}\label{f_tran_null}
 f\tonde*{ u } = f\tonde*{ u + v } ,
 \end{equation}
 for every $ u \in X $ and $ v \in \mathcal{N}_{f} $. 
 
 Finally, $f^{1/p}$ is a seminorm on $X$.
 Thus, for every $W$ linear subspace of $X$, 
 the restriction of $f^{1/p}$ to $W$ is a norm
 whenever $W\cap\mathcal{N}_{f}=\set*{0}$.
\end{lemma}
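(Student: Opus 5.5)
The natural route is to show first that $g\coloneqq f^{1/p}$ is a seminorm on $X$, and then read off all the other claims from that fact. Nonnegativity is given. Absolute homogeneity of degree one follows at once from \eqref{dsa980h}: for every $t\in\mathbb{R}$ we have $g(tu)=(\abs{t}^p f(u))^{1/p}=\abs{t}\,g(u)$, and in particular $g(0)=0$.

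The main step, and the only real difficulty, is the triangle inequality. Here I would use the standard sublevel-set (Minkowski gauge) argument. Fix $u,v\in X$ and put $a=g(u)$, $b=g(v)$.

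If $a,b>0$, set $\lambda=a/(a+b)\in(0,1)$. Homogeneity gives $f(u/a)=1$ and $f(v/b)=1$, so convexity yields
\begin{equation*}
 f\tonde*{\frac{u+v}{a+b}}=f\tonde*{\lambda\frac{u}{a}+(1-\lambda)\frac{v}{b}}\leq \lambda f\tonde*{\frac{u}{a}}+(1-\lambda)f\tonde*{\frac{v}{b}}=1.
\end{equation*}
Applying homogeneity once more, $f(u+v)\leq (a+b)^p$, that is, $g(u+v)\leq a+b$.

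The degenerate cases need separate care. Suppose $a=0$ and $b>0$. For $\eta>0$ we have $f(u/\eta)=0$ and $f(v/b)=1$. Take $\lambda=\eta/(\eta+b)$; convexity gives
\begin{equation*}
 f\tonde*{\frac{u+v}{\eta+b}}\leq\lambda\cdot 0+(1-\lambda)\cdot 1\leq 1,
\end{equation*}
so $g(u+v)\leq\eta+b$. Letting $\eta\to0$ gives $g(u+v)\leq b$. If $a=b=0$, the same computation with $\eta$ in both slots gives $g(u+v)\leq 2\eta$ for every $\eta>0$, hence $g(u+v)=0$. Equivalently, one can handle all cases at once by running the first argument with $a+\eta$ and $b+\eta$ in place of $a$ and $b$, since $f(u/(a+\eta))\leq1$ and $f(v/(b+\eta))\leq1$, and then letting $\eta\to0$. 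This completes the proof that $g$ is a seminorm.

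The remaining claims follow from seminorm properties. First, $\mathcal{N}_f=\set{g=0}$ is the kernel of a seminorm, so it is a linear subspace: it contains $0$, it is closed under scalar multiples by homogeneity, and it is closed under sums by the triangle inequality.

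Next, for $v\in\mathcal{N}_f$ the triangle inequality gives $g(u+v)\leq g(u)+g(v)=g(u)$. Applying the same bound to $u+v$ and $-v\in\mathcal{N}_f$ gives $g(u)\leq g(u+v)$. Hence $g(u+v)=g(u)$, and raising to the power $p$ yields \eqref{f_tran_null}.

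Finally, let $W$ be a linear subspace with $W\cap\mathcal{N}_f=\set{0}$. The restriction of $g$ to $W$ is a seminorm, and its only zero in $W$ is $0$. Therefore it is a norm on $W$.
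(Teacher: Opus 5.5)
Your proof is correct, but it runs in the opposite direction from the paper's. The paper proves the three claims separately and in this order.
\begin{itemize}
\item It shows that $\mathcal{N}_f$ is a subspace by writing $f(tu+sv)=(\abs{t}+\abs{s})^p f(\cdots)$ and applying convexity.
\item It obtains $f(u)\le f(u+v)$ for $v\in\mathcal{N}_f$ from the splitting $u=t\frac{u+v}{t}+(1-t)\frac{-v}{1-t}$. This gives $f(u)\le t^{1-p}f(u+v)$, and then it lets $t\to1$.
\item Only at the end does it get the seminorm property. It identifies $f^{1/p}$ with the Minkowski functional of $\set{f<1}$ and invokes Brezis's gauge lemma.
\end{itemize}

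You instead prove the triangle inequality for $f^{1/p}$ directly. You rescale $u$ and $v$ into the level set $\set{f\le1}$ and use convexity, with the degenerate cases $f(u)=0$ or $f(v)=0$ handled by the $\eta$-perturbation. This is exactly the proof of the gauge lemma specialized to $f$. You then deduce the subspace property and the translation invariance as generic facts about kernels of seminorms.

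Your route is more economical: one inequality does all the work, and the paper's first two computations become redundant once the seminorm property is available. It is also self-contained and purely algebraic. The paper, by contrast, cites a lemma stated for open convex neighbourhoods of $0$ in a normed space, which must tacitly be transferred to an arbitrary real vector space. That transfer is harmless, since $\set{f<1}$ is convex, balanced and absorbing. The paper's approach, in turn, proves the first two claims by elementary convexity estimates that never take $p$-th roots.
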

\begin{proof}
 We observe that \cref{dsa980h} implies that $ f $ is an even function and that $f(0)=0$.
 We prove that $ \mathcal{N}_{f} $ is a linear subspace of $ X $.
Let $ t , s \in \mathbb{R} $ and $ u , v \in \mathcal{N}_{f} $. Then, by the $ p $-homogeneity and the convexity of $ f $
we deduce,
\begin{align*}
 f\tonde*{ t u + s v } 
 &=
 \tonde*{\abs*{ t }+\abs*{ s }}^ p f\tonde*{\frac{\abs*{ t }}{\abs*{ t }+\abs*{ s }} \operatorname{sign}\tonde*{ t } u +\frac{\abs*{ s }}{\abs*{ t }+\abs*{ s }} \operatorname{sign}\tonde*{ s } v } 
 \\
 &\leq
 \tonde*{\abs*{ t }+\abs*{ s }}^ p 
 \tonde*{
 \frac{\abs*{ t }}{\abs*{ t }+\abs*{ s }} f\tonde*{ u } 
 +
 \frac{\abs*{ s }}{\abs*{ t }+\abs*{ s }} f\tonde*{ v } 
 }=0,
\end{align*}
if $ s \neq0$.
If $ s =0$, $ f\tonde*{ t u + s v } =\abs*{ t }^ p f\tonde*{ u } =0$.

Finally, we prove \eqref{f_tran_null}.
Let $ u \in X $ and $ v \in \mathcal{N}_{f} $ and $ t \in \left(0,1\right) $.
\begin{align*}
 f\tonde*{ u } 
 &=
 f\tonde*{ t \frac{ u + v }{ t }+(1- t )\frac{- v }{(1- t )}} 
 \\
 &\leq
 t f\tonde*{\frac{ u + v }{ t }} 
 +
 (1- t ) f\tonde*{\frac{- v }{(1- t )}} 
 =
 t ^{1- p } f\tonde*{ u + v } .
\end{align*}
Letting $ t \to 1$ we deduce that 
\begin{equation}\label{fds0}
 f\tonde*{ u } \leq f\tonde*{ u + v } 
 \text{ }
 \forall u \in X \forall v \in \mathcal{N}_{f} .
\end{equation}

Then, we let $ u \in X $ and $ v \in \mathcal{N}_{f} $
and applying $\eqref{fds0}$, we observe that $ f\tonde*{ u + v } \leq f\tonde*{ u + v +\tonde*{- v }} = f\tonde*{ u } $,
since $- v \in \mathcal{N}_{f} $. This together with \eqref{fds0} proves \eqref{f_tran_null}.

Finally, we define $C\coloneqq\set*{x\in X: f(x)<1}$ and observe that $tC=\set*{x\in X: f(x)<t^p}$ for every $t>0$, by the $p$-homogeneity of $f$.
Taking into account this information, we compute the \emph{Minkowski functional of $C$} (cfr. \cite{Bre11-MR2759829}) and obtain that it coincides with $f^{1/p}$. Therefore, by \cite[Lem 1.2]{Bre11-MR2759829} we deduce that $f^{1/p}$ is a seminorm. 

\end{proof}

The following lemma is a crucial tool in the proof of our main theorem (cfr. \cite[Lemma 3.1]{FarGuaSch20-MR4109099}).
\begin{lemma}[][res:cover_cubes]
 Let $ \Omega \subset \mathbb{R}^{n} $ be a bounded open set, $ u \in C^{1}\tonde*{\overline{\Omega};\mathbb{R}^{m}} $
 and for every $ t >0$ let $ U_{t} \coloneqq\set*{ x \in \Omega : \abs*{ \nabla u\tonde*{ x }}_{2} > t }$.

 Let $ t >0$ and $ \sigma >0$.
 Then, there exist $ r > 0$ and
 a ﬁnite family of pairwise disjoint open cubes $ \graffe*{ Q\tonde*{x_{i};r}}_{1\leq i \leq l } $ 
 contained
 in $ U_{t} $, such that,
 \begin{enumerate}[label=\alph*)]
 \item\label{item:cover_cubes_a} $ \abs*{ U_{t} \setminus \bigcup^{ l }_{ i =1} Q\tonde*{x_{i};r}} < \sigma $,
 \item\label{item:cover_cubes_b} $ \abs*{ \nabla u\tonde*{ x } - \nabla u\tonde*{ y }}_{2} < \sigma \qquad \forall\, x , y \in Q\tonde*{x_{i};r} \,\forall1\leq i \leq l $.
 \end{enumerate}
 Moreover, if $ \abs*{ \partial U_{t}} =0$ there exists $ W_{t, \sigma } \subset \Omega $ open s.t.
 $ \Omega \cap \tonde*{ \overline{ U_{t}} \setminus \bigcup^{ l }_{ i =1} Q\tonde*{x_{i};r}}\subset W_{t, \sigma } $
 and $ \abs*{ W_{t, \sigma }} < \sigma $.
\end{lemma}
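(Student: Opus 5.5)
The plan combines uniform continuity of $\nabla u$ with inner regularity of Lebesgue measure applied to the open set $U_t$. Since $u\in C^1(\overline{\Omega};\mathbb{R}^m)$ and $\Omega$ is bounded, $\overline{\Omega}$ is compact. Hence $\nabla u$ is uniformly continuous on $\overline\Omega$, and there is $\delta>0$ with $\abs*{\nabla u(x)-\nabla u(y)}_2<\sigma$ whenever $x,y\in\overline\Omega$ and $\abs{x-y}_2<\delta$. Choosing $r$ with $\sqrt n\, r<\delta$, any cube $Q(x_i;r)\subset U_t$ has diameter $\sqrt n\, r<\delta$, so \ref{item:cover_cubes_b} holds automatically. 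Thus the only task is to realize \ref{item:cover_cubes_a} with cubes of one common small side $r$.

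For \ref{item:cover_cubes_a}, note that $U_t$ is open in $\Omega$, since $\nabla u$ is continuous, and has finite measure. Fix the dyadic-type grid $\set*{Q(r k;r): k\in\mathbb{Z}^n}$ of open cubes of side $r$, which are pairwise disjoint. Let $\mathcal{F}_r$ be the finite family of grid cubes whose closure lies in $U_t$. Every point $x\in U_t$ has $\operatorname{dist}(x,\partial U_t)>0$, so for $r$ small enough ($\sqrt n\, r<\operatorname{dist}(x,\mathbb{R}^n\setminus U_t)$) the closed grid cube containing $x$ lies in $U_t$. The indicator of $U_t\setminus\bigcup\mathcal F_r$, off the null set of grid faces, therefore tends to $0$ pointwise as $r\to0$ along $r=2^{-j}$. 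By dominated convergence, since $\abs{U_t}<\infty$, we get $\abs*{U_t\setminus\bigcup\mathcal F_r}\to0$. It suffices to take $r=2^{-j}$ small enough for both \ref{item:cover_cubes_a} and \ref{item:cover_cubes_b}. Finiteness of $\mathcal{F}_r$ follows from boundedness of $\Omega$.

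For the last claim, assume $\abs{\partial U_t}=0$. The set $\Omega\cap(\overline{U_t}\setminus\bigcup_i Q(x_i;r))$ is contained in
\[
\bigl(U_t\setminus \textstyle\bigcup_i Q(x_i;r)\bigr)\cup(\Omega\cap\partial U_t),
\]
and this set has measure $<\sigma$ by \ref{item:cover_cubes_a}. Moreover, $K\coloneqq\overline{U_t}\setminus\bigcup_i Q(x_i;r)$ is closed. So $\Omega\cap K$ is a measurable (relatively closed) subset of $\Omega$ with measure $<\sigma$. By outer regularity of Lebesgue measure there is an open $W\supset\Omega\cap K$ with $\abs{W}<\sigma$, after first applying \ref{item:cover_cubes_a} with a slightly smaller tolerance, say $\sigma/2$, to leave room. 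Setting $W_{t,\sigma}\coloneqq W\cap\Omega$ gives an open subset of $\Omega$ with the required properties.

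No step is a serious obstacle. The only point requiring care is to use a single side length $r$ for all cubes, which the fixed-grid argument provides. One must also make sure the strict inequalities survive; this is handled by applying the construction with $\sigma/2$.
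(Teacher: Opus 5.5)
Your proof is correct. Part b) and the final claim follow the paper: uniform continuity of $\nabla u$ on $\overline{\Omega}$, then $|\partial U_t|=0$ plus outer regularity. For part a), however, you take a different and more elementary route.

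The paper decomposes $U_t$, up to a null set, into countably many disjoint open dyadic cubes of side less than $\delta$. It then picks a compact $K\subset U_t$ with $|U_t\setminus K|<\sigma/2$, extracts finitely many cubes covering $K$, and subdivides them dyadically down to the smallest side length to get a common $r$.

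You instead fix one uniform grid of side $r$ with $\sqrt{n}\,r<\delta$ and keep the cells whose closures lie in $U_t$. Dominated convergence, applied off the countable union of grid hyperplanes, then shows that the uncovered part of $U_t$ has measure tending to $0$ as $r\to0$. This gives a common side length directly, with no decomposition lemma, compactness, or subdivision step.

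It also avoids a small delicacy in the paper's version. Open cubes that cover $U_t$ only up to a null set need not cover the compact set $K$. There, the finite subfamily is more safely obtained by truncating the convergent series $\sum_k|Q_k|=|U_t|$. The paper's route is the standard Whitney-type argument inherited from the literature it cites; the dyadic structure it produces is not needed later.

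Your passage to $\sigma/2$ is harmless but unnecessary. The set $E=\Omega\cap(\overline{U_t}\setminus\bigcup_i Q(x_i;r))$ already satisfies $|E|<\sigma$ strictly. Outer regularity therefore directly gives an open $W\supset E$ with $|W|<\sigma$.
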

\begin{proof}
 Let $ t >0,\, \sigma >0$.
 Since $ \nabla u $ is uniformly continuous in $ \overline{ \Omega } $,
 there exists $ \delta >0$ s.t. $ \abs*{ \nabla u\tonde*{ x } - \nabla u\tonde*{ y }}_{2} < \sigma $
 for every cube $ Q^{ \delta } \subset \overline{\Omega} $ of side length $ \delta $
 and for every $ x , y \in Q^{ \delta } $. 
 Since $ U_{t} $ is open, there exists a sequence
 of pairwise disjoint open dyadic cubes $ \graffe*{Q_{k}}_{k\in\mathbb{N}} $ with side length less than $ \delta $, 
 such that $ U_{t} = \bigcup_{ k \in \mathbb{N}} Q_{ k } $ up to a set of zero Lebesgue measure.
 Moreover, by the inner regularity of the Lebesgue measure,
 we can find a compact set $ K_{t, \sigma } \subset U_{t} $
 s.t. $ \abs*{ U_{t} \setminus K_{t, \sigma }} < \sigma /2$.
 Thanks to the compactness of $ K_{t, \sigma } $ from $ \graffe*{Q_{k}}_{k\in\mathbb{N}} $ we can extract a finite subfamily 
 $ \graffe*{ Q_{ k }}_{1\leq k \leq l_{0}} $ such that
 $ K_{t, \sigma } \subset \bigcup^{ l_{0}}_{ k =1} Q_{ k } $.
 We define $ r >0$ as the minimum side length
 among all the cubes in $ \graffe*{ Q_{ k }}_{1\leq k \leq l_{0}} $.
 For every $1\leq k \leq l_{0} $ we subdivide in a dyadic way $ Q_{ k } $
 with respect to the length $ r $:
 we observe that $ Q_{ k } = \bigcup_{ i=1 }^{j_k} Q^{i}_{k} $ (up to a set of zero Lebesgue measure),
 where $ \graffe*{ Q^{i}_{k}}_{1\leq i \leq j_{k}} $ is a finite family of pairwise disjoint open dyadic cubes of side length $ r $.
 $ \graffe*{ Q^{i}_{k}}_{1\leq k \leq l_{0} ,1\leq i \leq j_{k}} $ is (up to relabeling the indexes)
 the desired family $ \graffe*{ Q\tonde*{x_{i};r}}_{1\leq i \leq l } $, which guarantees
 that \ref{item:cover_cubes_a} and \ref{item:cover_cubes_b} hold.

 Finally, if $ \abs*{ \partial U_{t}} =0$, then
 $ \abs*{ \overline{ U_{t}} \setminus \bigcup_{1\leq i \leq l } Q\tonde*{x_{i};r}} = \abs*{ U_{t} \setminus \bigcup_{1\leq i \leq l } Q\tonde*{x_{i};r}} < \sigma /2$.
 Therefore, by the outer regularity of the Lebesgue measure,
 we can find an open set $ O_{t, \sigma } $ s.t. $ \overline{ U_{t}} \setminus \bigcup_{1\leq i \leq l } Q\tonde*{x_{i};r} \subset O_{t, \sigma } $
 and $ \abs*{ O_{t, \sigma }} < \sigma $.
 
 We can then define 
 $ W_{t, \sigma } \coloneqq \Omega 
 \cap 
 O_{t, \sigma } 
 $ to conclude the proof.
\end{proof}

For $ \Omega \subseteq \mathbb{R}^{n} $ open, $ p \in \left[1,\infty\right] $ and $ u \in W^{1,p}_{\mathrm{loc}}\tonde*{\Omega;\mathbb{R}^{n}} $ we denote by $ \nabla u $ 
the weak Jacobian matrix of $ u $ and by $ \mathcal{E}u $ the symmetric part of $ \nabla u $, also referred
to as the \emph{symmetric gradient} of $ u $:
\begin{equation*}
  \mathcal{E}u \coloneqq\frac{ \nabla u + \nabla u ^{T}}{2}.
\end{equation*}

Throughout this work we will need the following generalization of the notion of weak gradient.
\begin{definition}[][def:DL]
 Let $ \Omega \subseteq \mathbb{R}^{n} $ be an open set, $ u \in L^{1}_{\mathrm{loc}}\tonde*{\Omega;\mathbb{R}^{m}} $
 and $ \mathfrak{L} : \mathbb{R}^{m\times n} \to \mathbb{R}^{m\times n} $ be a linear map:
 \begin{equation*}
 \mathfrak{L}\tonde*{ A } _{ i j } \coloneqq\sommatoria[1\leq k \leq m ,1\leq l \leq n ]{ \mathfrak{L} _{ i j k l } A_{ k l }},
 \end{equation*}
 for every $ A \in \mathbb{R}^{m\times n} $ and for every $1\leq i \leq m $ and $1\leq j \leq n $.
 We define the $ m \times n $ matrix of distributions $ D_{\mathfrak{L}} u $ as,
 \begin{equation*}
 \dpair*{ \tonde*{ D_{\mathfrak{L}} u }_{ i j } , \phi } 
 \coloneqq
 -
 \int_{ \Omega } 
 \sommatoria[1\leq k \leq m ,1\leq l \leq n ]{ \mathfrak{L} _{ i j k l } u _{ k } \frac{\partial \phi }{\partial x_{ l }}}
 \thinspace\mathrm{d} x 
 =
 -
 \int_{ \Omega } 
 \mathfrak{L}\tonde*{ u \otimes \nabla \phi } _{ i j } 
 \thinspace\mathrm{d} x 
 ,
 \end{equation*}
 for every $1\leq i \leq m $, $1\leq j \leq n $ and for every $ \phi \in C^{\infty}_{c}\tonde*{\Omega} $. 
\end{definition}

We will also make use of the following \emph{Sobolev-type spaces}.
\begin{definition}[][def:WL]
 Let $ \Omega \subseteq \mathbb{R}^{n} $ be an open set, $ p \in \left[1,\infty\right) $ and $ \mathfrak{L} : \mathbb{R}^{m\times n} \to \mathbb{R}^{m\times n} $ be a linear map.
 Given $ u \in L^{p}_{\mathrm{loc}}\tonde*{\Omega;\mathbb{R}^{m}} $,
 whenever the matrix of distributions
 $ D_{\mathfrak{L}} u $ is represented by a map in $ L^{p}_{\mathrm{loc}}\tonde*{\Omega;\mathbb{R}^{m\times n}} $,
 by a slight abuse of notation, we will denote that function
 using the same symbol $ D_{\mathfrak{L}} u $.
 We define,
 \begin{equation*}
 W^{\mathfrak{L},p}_{\mathrm{loc}}\tonde*{\Omega;\mathbb{R}^{m}} \coloneqq\set*{ u \in L^{p}_{\mathrm{loc}}\tonde*{\Omega;\mathbb{R}^{m}} : D_{\mathfrak{L}} u \in L^{p}_{\mathrm{loc}}\tonde*{\Omega;\mathbb{R}^{m\times n}}},
 \end{equation*}
 \begin{equation*}
 W^{\mathfrak{L},p}\tonde*{\Omega;\mathbb{R}^{m}} \coloneqq\set*{ u \in L^{p}\tonde*{\Omega;\mathbb{R}^{m}} : D_{\mathfrak{L}} u \in L^{p}\tonde*{\Omega;\mathbb{R}^{m\times n}}}.
 \end{equation*}
\end{definition}
For a more general version of the spaces in \cref{def:WL} and their properties we
refer for instance to \cite{DieGme25-MR4937883,GmeRai19-MR4019087}.

We observe that $ W^{\mathfrak{L},p}_{\mathrm{loc}}\tonde*{\Omega;\mathbb{R}^{m}} = L^{p}_{\mathrm{loc}}\tonde*{\Omega;\mathbb{R}^{m}} $ and $ W^{\mathfrak{L},p}\tonde*{\Omega;\mathbb{R}^{m}} = L^{p}\tonde*{\Omega;\mathbb{R}^{m}} $ if $ \mathfrak{L} =0$,
while $ W^{\mathfrak{L},p}_{\mathrm{loc}}\tonde*{\Omega;\mathbb{R}^{m}} = W^{1,p}_{\mathrm{loc}}\tonde*{\Omega;\mathbb{R}^{m}} $ and $ W^{\mathfrak{L},p}\tonde*{\Omega;\mathbb{R}^{m}} = W^{1,p}\tonde*{\Omega;\mathbb{R}^{m}} $ if $ \mathfrak{L} = \operatorname{id}_{\mathbb{R}^{m\times n}} $.

Moreover, if $ u \in W^{1,p}_{\mathrm{loc}}\tonde*{\Omega;\mathbb{R}^{m}} $, then for every $ \mathfrak{L} : \mathbb{R}^{m\times n} \to \mathbb{R}^{m\times n} $ linear map $ D_{\mathfrak{L}} u = \mathfrak{L}\tonde*{ \nabla u } \in L^{p}_{\mathrm{loc}}\tonde*{\Omega;\mathbb{R}^{m\times n}} $, being
\begin{equation*}
 \dpair*{ \tonde*{ D_{\mathfrak{L}} u }_{ i j } , \phi } 
 =
 \int_{ \Omega } 
 \mathfrak{L}\tonde*{ \nabla u } _{ i j } 
 \phi 
 \thinspace\mathrm{d} x ,
\end{equation*}
for every $ \phi \in C^{\infty}_{c}\tonde*{\Omega} $.

Therefore, 
\begin{equation}\label{oij3298y}
 W^{1,p}_{\mathrm{loc}}\tonde*{\Omega;\mathbb{R}^{m}} \subset W^{\mathfrak{L},p}_{\mathrm{loc}}\tonde*{\Omega;\mathbb{R}^{m}} ,\text{ } W^{1,p}\tonde*{\Omega;\mathbb{R}^{m}} \subset W^{\mathfrak{L},p}\tonde*{\Omega;\mathbb{R}^{m}} ,
\end{equation}
and the set inclusions in \eqref{oij3298y} become equalities when
$ \mathfrak{L} $ coincides with the identity operator.

We also observe that if 
\begin{equation}\label{130h}
 \mathfrak{L}\tonde*{ A } \coloneqq\frac{ A + A ^{T}}{2},
\end{equation}
for every $ A \in \mathbb{R}^{n\times n} $,
then $ D_{\mathfrak{L}} u = \mathcal{E}u $ for every $ u \in W^{1,p}_{\mathrm{loc}}\tonde*{\Omega;\mathbb{R}^{n}} $. 

\begin{definition}[][def:E1p]
 Let $ \Omega \subseteq \mathbb{R}^{n} $ be an open set, $ p \in \left[1,\infty\right) $ and $ \mathfrak{L} $
 as in \cref{130h}.
 We define,
\begin{equation*}
 E^{1,p}\tonde*{\Omega;\mathbb{R}^{n}} \coloneqq W^{\mathfrak{L},p}\tonde*{\Omega;\mathbb{R}^{n}} ,\text{ } E^{1,p}_{\mathrm{loc}}\tonde*{\Omega;\mathbb{R}^{n}} \coloneqq W^{\mathfrak{L},p}_{\mathrm{loc}}\tonde*{\Omega;\mathbb{R}^{n}} .
\end{equation*}
\end{definition}

We remark that for $ p \in \left(1,\infty\right) $
the space $ E^{1,p}\tonde*{\Omega;\mathbb{R}^{n}} $ coincides with $ W^{1,p}\tonde*{\Omega;\mathbb{R}^{n}} $
if $ \Omega $ is sufficiently regular (cfr. \cite[Thm. 2]{DiFSol25-MR4840265})
while in general the two spaces are different (cfr. \cite{AcoDurLop13-MR2988724}).
Therefore, the set inclusions in \eqref{oij3298y} can be equalities even for operators $ \mathfrak{L} $
different from the identity.
Instead, for $ p =1$ the spaces $ W^{1,1}\tonde*{\Omega;\mathbb{R}^{n}} $ and $ E^{1,1}\tonde*{\Omega;\mathbb{R}^{n}} $ are different
(cfr. \cite{DiFSol25-MR4840265}).
For a discussion related to this subject we refer to
\cite{Van13-MR3085095} and the references therein.

\begin{lemma}[][res:DL_e_conv]
 Let $ p \in \left[1,\infty\right) $ and $ \mathfrak{L} : \mathbb{R}^{m\times n} \to \mathbb{R}^{m\times n} $
 be a linear operator.
 Let $ u \in W^{\mathfrak{L},p}\tonde*{\mathbb{R}^{n};\mathbb{R}^{m}} $.
 Then,
 \begin{equation*}
 \mathfrak{L}\tonde*{ \nabla \tonde*{ \rho _{ \sigma } \ast u }} 
 =
 D_{ \mathfrak{L}}\tonde*{ \rho _{ \sigma } \ast u } 
 =
 \rho _{ \sigma } \ast D_{\mathfrak{L}} u 
 \in L^{p}\tonde*{\mathbb{R}^{n};\mathbb{R}^{m\times n}} 
 \end{equation*}
\end{lemma}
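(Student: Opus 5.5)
The plan is to verify the two equalities separately, testing everything against $\phi\in C^{\infty}_{c}\tonde*{\mathbb{R}^{n}}$. Set $u_\sigma \coloneqq \rho_\sigma\ast u$.

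\emph{First equality.} Since $u\in L^{p}\tonde*{\mathbb{R}^{n};\mathbb{R}^{m}}$ and $\rho_\sigma\in C^{\infty}_{c}$, the function $u_\sigma$ is smooth, and $\nabla u_\sigma = u\ast\nabla\rho_\sigma$ (componentwise) lies in $L^p$ by Young's inequality. Hence $u_\sigma\in W^{1,p}_{\mathrm{loc}}$. By the remark following \cref{def:WL}, this gives $D_{\mathfrak{L}} u_\sigma = \mathfrak{L}\tonde*{\nabla u_\sigma}$. Concretely, this is just integration by parts:
\begin{equation*}
-\int \mathfrak{L}\tonde*{u_\sigma\otimes\nabla\phi}_{ij}\,\mathrm{d}x = \int \mathfrak{L}\tonde*{\nabla u_\sigma}_{ij}\phi\,\mathrm{d}x,
\end{equation*}
which holds because $\mathfrak{L}$ acts linearly with constant coefficients.

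\emph{Second equality.} Write $\check\rho_\sigma(x)\coloneqq\rho_\sigma(-x)$; since $\rho$ is radial, $\check\rho_\sigma=\rho_\sigma$. The key facts are Fubini and the commutation $\nabla(\check\rho_\sigma\ast\phi)=\check\rho_\sigma\ast\nabla\phi$, with $\check\rho_\sigma\ast\phi\in C^{\infty}_{c}\tonde*{\mathbb{R}^{n}}$. Using these, I would compute
\begin{align*}
\dpair*{\tonde*{D_{\mathfrak{L}}u_\sigma}_{ij},\phi}
&= -\sum_{k,l}\mathfrak{L}_{ijkl}\int (\rho_\sigma\ast u_k)\,\partial_l\phi\,\mathrm{d}x
= -\sum_{k,l}\mathfrak{L}_{ijkl}\int u_k\,\partial_l(\check\rho_\sigma\ast\phi)\,\mathrm{d}x\\
&= \dpair*{\tonde*{D_{\mathfrak{L}}u}_{ij},\check\rho_\sigma\ast\phi}
= \int \tonde*{D_{\mathfrak{L}}u}_{ij}\,(\check\rho_\sigma\ast\phi)\,\mathrm{d}x
= \int \tonde*{\rho_\sigma\ast D_{\mathfrak{L}}u}_{ij}\,\phi\,\mathrm{d}x .
\end{align*}
The fourth equality uses that $D_{\mathfrak{L}}u$ is represented by an $L^p$ function, because $u\in W^{\mathfrak{L},p}$. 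The last equality is Fubini again. Fubini is justified in both places since $L^p$ functions against compactly supported smooth functions are absolutely integrable.

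\emph{Conclusion.} Two locally integrable functions inducing the same distribution agree almost everywhere, so $D_{\mathfrak{L}}u_\sigma=\rho_\sigma\ast D_{\mathfrak{L}}u$. Finally, $\rho_\sigma\ast D_{\mathfrak{L}}u\in L^{p}\tonde*{\mathbb{R}^{n};\mathbb{R}^{m\times n}}$ by Young's inequality, since $\norm{\rho_\sigma}_{L^1}=1$.

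No step is a genuine obstacle. The only point requiring care is the transfer of the derivative onto the test function in the second equality. This rests on moving $\check\rho_\sigma$ onto $\phi$, which keeps the test function in $C^{\infty}_{c}$, and on the fact that the constant coefficients $\mathfrak{L}_{ijkl}$ commute with both convolution and differentiation.
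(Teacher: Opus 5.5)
Your proof is correct and follows essentially the same route as the paper, which simply refers to the argument of Brezis' Lemma 8.4. That argument moves the mollifier onto the test function as $\check\rho_\sigma\ast\phi\in C^{\infty}_{c}(\mathbb{R}^n)$, commutes derivative and convolution, and uses that $D_{\mathfrak{L}}u$ is represented by an $L^p$ function, with the constant coefficients $\mathfrak{L}_{ijkl}$ passing through unchanged; since $\rho_\sigma$ is compactly supported, the approximation step Brezis needs for general $\rho\in L^1$ is unnecessary, as in your write-up.
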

\begin{proof}
 The strategy is the same one used in the proof of
 \cite[Lem 8.4]{Bre11-MR2759829}
\end{proof}
\begin{lemma}[][res:DL_Leibniz]
 Let $ \Omega \subseteq \mathbb{R}^{n} $ be an open set, $ p \in \left[1,\infty\right) $ and $ \mathfrak{L} : \mathbb{R}^{m\times n} \to \mathbb{R}^{m\times n} $
 be a linear operator.
 Let $ u \in W^{\mathfrak{L},p}_{\mathrm{loc}}\tonde*{\Omega;\mathbb{R}^{m}} $ and $ v \in C^{1}_{c}\tonde*{\Omega} $.
 Then, $\overline{ u v }\in W^{\mathfrak{L},p}\tonde*{\mathbb{R}^{n};\mathbb{R}^{m}} $ and,
 \begin{equation*}
 D_{ \mathfrak{L}}\overline{ u v } =
 \overline{ v D_{\mathfrak{L}} u + u D_{ \mathfrak{L}} v }
 \in L^{p}\tonde*{\mathbb{R}^{n};\mathbb{R}^{m\times n}} ,
 \end{equation*}
 where $\overline{f}$ denotes the null extensions of a function $f$ defined on $ \Omega $ to the whole space $ \mathbb{R}^{n} $.
\end{lemma}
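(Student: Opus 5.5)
We verify the claim directly from \cref{def:DL}, testing the distribution $D_{\mathfrak{L}}\overline{uv}$ against an arbitrary $\phi\in C^{\infty}_{c}\tonde*{\mathbb{R}^{n}}$ and using the product rule $\nabla(v\phi)=\phi\nabla v+v\nabla\phi$.

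First we check integrability. Let $K\coloneqq\operatorname{supp}(v)\Subset\Omega$. Since $u\in L^{p}_{\mathrm{loc}}\tonde*{\Omega;\mathbb{R}^{m}}$ and $D_{\mathfrak{L}}u\in L^{p}_{\mathrm{loc}}\tonde*{\Omega;\mathbb{R}^{m\times n}}$, and since $v,\nabla v$ are bounded and supported in $K$, the null extensions $\overline{uv}$, $\overline{vD_{\mathfrak{L}}u}$ and $\overline{uD_{\mathfrak{L}}v}$ all belong to $L^{p}$ on $\mathbb{R}^{n}$. Here $D_{\mathfrak{L}}v$ is understood as $\mathfrak{L}(u\otimes\nabla v)$, i.e. the term $\mathfrak{L}$ applied to the rank-one product. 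I read the notation $uD_{\mathfrak{L}}v$ in the statement that way, and I would make this explicit in the proof.

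Next we compute the distribution. Fix $\phi\in C^{\infty}_{c}\tonde*{\mathbb{R}^{n}}$. Then
\[
\dpair*{(D_{\mathfrak{L}}\overline{uv})_{ij},\phi}=-\int_{\Omega}\mathfrak{L}\tonde*{u\otimes v\nabla\phi}_{ij}\thinspace\mathrm{d}x .
\]
Write $v\nabla\phi=\nabla(v\phi)-\phi\nabla v$. The function $\psi\coloneqq v\phi$ lies in $C^{1}_{c}\tonde*{\Omega}$.

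The defining identity of $D_{\mathfrak{L}}u$ in \cref{def:DL}, with $D_{\mathfrak{L}}u\in L^{p}_{\mathrm{loc}}$, holds for $C^{\infty}_{c}$ test functions. To extend it to the $C^{1}_{c}$ function $\psi$, mollify: $\rho_{\delta}\ast\psi\to\psi$ in $C^{1}$ with supports in a fixed compact subset of $\Omega$. Both sides of the identity pass to the limit, because $u$ and $D_{\mathfrak{L}}u$ are in $L^{1}$ on that compact set. This extension is the only point requiring care.

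With the extended identity,
\[
-\int_{\Omega}\mathfrak{L}\tonde*{u\otimes\nabla\psi}_{ij}\thinspace\mathrm{d}x=\int_{\Omega}(D_{\mathfrak{L}}u)_{ij}\,v\phi\thinspace\mathrm{d}x .
\]
The remaining term is
\[
+\int_{\Omega}\mathfrak{L}\tonde*{u\otimes\nabla v}_{ij}\phi\thinspace\mathrm{d}x ,
\]
by linearity of $\mathfrak{L}$. Summing the two and extending by zero outside $K$ gives
\[
\dpair*{(D_{\mathfrak{L}}\overline{uv})_{ij},\phi}=\int_{\mathbb{R}^{n}}\tonde*{\overline{vD_{\mathfrak{L}}u+\mathfrak{L}(u\otimes\nabla v)}}_{ij}\phi\thinspace\mathrm{d}x .
\]
So $D_{\mathfrak{L}}\overline{uv}$ is represented by an $L^{p}$ function. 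Together with $\overline{uv}\in L^{p}$, this gives $\overline{uv}\in W^{\mathfrak{L},p}\tonde*{\mathbb{R}^{n};\mathbb{R}^{m}}$ with the asserted formula.
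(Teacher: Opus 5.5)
Your proof is correct and is essentially the paper's own argument: the paper simply defers to the strategy of Brezis (Ch.~9, Rem.~4(ii)), which is exactly this direct computation against a test function $\phi\in C^\infty_c(\mathbb{R}^n)$ using $v\nabla\phi=\nabla(v\phi)-\phi\nabla v$. The two details you flag, namely reading $uD_{\mathfrak{L}}v$ as $\mathfrak{L}(u\otimes\nabla v)$ and justifying $v\phi\in C^1_c(\Omega)$ as a test function by mollification, are precisely the points worth making explicit.
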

\begin{proof}
 The strategy is the same one used in the proof of
 \cite[Ch. 9, Rem. 4, part (ii)]{Bre11-MR2759829}.
\end{proof}

\begin{lemma}[][res:DL_density]
 Let $ \Omega \subseteq \mathbb{R}^{n} $ be an open set, $ p \in \left[1,\infty\right) $ and $ \mathfrak{L} : \mathbb{R}^{m\times n} \to \mathbb{R}^{m\times n} $
 be a linear operator.
 Let $ u \in W^{\mathfrak{L},p}_{\mathrm{loc}}\tonde*{\Omega;\mathbb{R}^{m}} $.
 Then, 
 there exists $ \graffe*{u_{ \sigma }}_{ \sigma >0} \subset C^{\infty}_{c}\tonde*{\mathbb{R}^{n};\mathbb{R}^{m}} $ s.t. $ u_{ \sigma } \to u $ in $ L^{p}_{\mathrm{loc}}\tonde*{\Omega;\mathbb{R}^{m}} $ as $ \sigma \to 0^{+} $ and,
 \begin{equation*}
 \lim\limits_{ \sigma \to 0^{+}} \norm*{ D_{\mathfrak{L}} u - D_{ \mathfrak{L}} u_{ \sigma }}_{L^{p}\tonde*{\tilde{\Omega};\mathbb{R}^{m\times n}}} =0,
 \end{equation*}
 for every $ \tilde{\Omega} \in \mathcal{A}_{\Omega} $ s.t. $ \tilde{\Omega} \Subset \Omega $.

 Moreover, if $ u \in W^{\mathfrak{L},p}\tonde*{\Omega;\mathbb{R}^{m}} $, 
 then $ u_{ \sigma } \to u $ in $ L^{p}\tonde*{\Omega;\mathbb{R}^{m}} $ 
 and $ u_{ \sigma } \to u $ in $ W^{\mathfrak{L},p}\tonde*{\tilde{\Omega};\mathbb{R}^{m}} $ for every 
 $ \tilde{\Omega} \in \mathcal{A}_{\Omega} $ s.t. $ \tilde{\Omega} \Subset \Omega $ as $ \sigma \to 0^{+} $.
\end{lemma}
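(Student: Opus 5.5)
The plan is to reproduce the classical cut-off and mollification argument (as in \cite[Ch.~9]{Bre11-MR2759829}), using the two preceding lemmas \cref{res:DL_Leibniz} and \cref{res:DL_e_conv} as the $D_{\mathfrak{L}}$-analogues of the Leibniz rule and of the commutation of mollification with weak derivatives.

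First, fix an exhaustion of $\Omega$ by open sets $\Omega_k \Subset \Omega_{k+1}\Subset\Omega$ with $\bigcup_k\Omega_k=\Omega$, and cut-off functions $\zeta_k\in C^{\infty}_c(\Omega)$ with $0\le\zeta_k\le1$ and $\zeta_k\equiv1$ on $\Omega_k$. By \cref{res:DL_Leibniz} each $\overline{\zeta_k u}$ belongs to $W^{\mathfrak{L},p}(\mathbb{R}^n;\mathbb{R}^m)$, with
\[
D_{\mathfrak{L}}\overline{\zeta_k u}=\overline{\zeta_k D_{\mathfrak{L}}u+u D_{\mathfrak{L}}\zeta_k},
\]
and this coincides with $D_{\mathfrak{L}}u$ on $\Omega_k$. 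Next, I mollify: by \cref{res:DL_e_conv},
\[
D_{\mathfrak{L}}\bigl(\rho_\tau\ast\overline{\zeta_k u}\bigr)=\rho_\tau\ast D_{\mathfrak{L}}\overline{\zeta_k u}\to D_{\mathfrak{L}}\overline{\zeta_k u}
\quad\text{in } L^p(\mathbb{R}^n;\mathbb{R}^{m\times n}),
\]
while $\rho_\tau\ast\overline{\zeta_k u}\to\overline{\zeta_k u}$ in $L^p(\mathbb{R}^n;\mathbb{R}^m)$. These mollified functions are smooth, and they have compact support because $\zeta_k$ does. For each $k$ I choose $\tau_k$ so small that both errors are below $1/k$.

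Then I set $u_\sigma:=\rho_{\tau_k}\ast\overline{\zeta_k u}$ for $\sigma\in[1/(k+1),1/k)$. Given $\tilde\Omega\Subset\Omega$, we have $\tilde\Omega\subset\Omega_k$ for all large $k$, because the $\Omega_k$ are open, increasing and cover the compact set $\overline{\tilde\Omega}$. On $\tilde\Omega$ we have $u=\overline{\zeta_k u}$ and $D_{\mathfrak{L}}u=D_{\mathfrak{L}}\overline{\zeta_k u}$, so both the $L^p(\tilde\Omega)$ error and the $D_{\mathfrak{L}}$ error are below $1/k$. This gives the local convergences in the first part of the statement.

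For the global statement, assume $u\in W^{\mathfrak{L},p}(\Omega;\mathbb{R}^m)$. Then $\|\zeta_k u-u\|_{L^p(\Omega)}\le\|u\|_{L^p(\Omega\setminus\Omega_k)}\to0$ by dominated convergence. Combined with the choice of $\tau_k$, this gives $u_\sigma\to u$ in $L^p(\Omega)$. Convergence in $W^{\mathfrak{L},p}(\tilde\Omega)$ for $\tilde\Omega\Subset\Omega$ then follows from the local part.

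The main obstacle is the gradient term $uD_{\mathfrak{L}}\zeta_k$. It is not small globally, which is precisely why the statement only claims convergence of $D_{\mathfrak{L}}u_\sigma$ on compactly contained subsets. The construction is organized so that this term vanishes on $\Omega_k$, since $\zeta_k\equiv1$ there, rather than by trying to control its size. The only verification still needed is the distributional identity behind \cref{res:DL_Leibniz}, which is a direct integration by parts against $\phi\zeta_k$, and I take it as given.
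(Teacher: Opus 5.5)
Your proof is correct, but it is organized differently from the paper's.

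The paper does not localize in the construction. It uses the single explicit family $u_\sigma=\zeta_\sigma\,(\rho_\sigma\ast\overline{u})$, where $\overline{u}$ is the null extension of $u$ and $\zeta_\sigma(x)=\zeta(\sigma x)$ is a cut-off at scale $1/\sigma$ whose only job is to make the support compact. Localization enters only in the verification. One fixes $v\in C^\infty_c(\Omega')$ with $v\equiv1$ near $\tilde\Omega$ and observes that $\rho_\sigma\ast\overline{uv}=\rho_\sigma\ast\overline{u}$ on $\tilde\Omega$ for small $\sigma$. Then \cref{res:DL_Leibniz} and \cref{res:DL_e_conv} applied to $\overline{uv}$ give the convergence of $D_{\mathfrak{L}}u_\sigma$ on $\tilde\Omega$.

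You instead cut off inside $\Omega$ first, using the exhaustion functions $\zeta_k$, then mollify $\overline{\zeta_k u}$ and choose $\tau_k$ diagonally.

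The paper's route gives a closed formula with $\tau=\sigma$ and no $u$-dependent choice of parameters. Your route gains generality. The function $\overline{\zeta_k u}$ always lies in $L^p(\mathbb{R}^n;\mathbb{R}^m)$, whereas $\rho_\sigma\ast\overline{u}$ needs the null extension $\overline{u}$ to be locally integrable on all of $\mathbb{R}^n$. That holds when $u\in L^p(\Omega;\mathbb{R}^m)$. It also holds when $u$ is the restriction of an $L^p_{\mathrm{loc}}(\mathbb{R}^n;\mathbb{R}^m)$ map, which is how the lemma is used in the proof of \cref{thm:Main_BMO_WL}. It can fail for a general $u\in W^{\mathfrak{L},p}_{\mathrm{loc}}(\Omega;\mathbb{R}^m)$: take $u(x)=x^{-2}$ on $\Omega=(0,1)$, for which $\rho_\sigma\ast\overline{u}$ is infinite near $0$.

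So your diagonal argument proves the local statement in the full generality in which it is stated. The global part is then handled equally cleanly in both approaches. In yours it comes from the tail bound $\|u\|_{L^p(\Omega\setminus\Omega_k)}\to0$. In the paper's it comes from $\rho_\sigma\ast\overline{u}\to\overline{u}$ in $L^p(\mathbb{R}^n)$ together with dominated convergence for $\zeta_\sigma$.
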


\begin{proof}
 The strategy is the same one used in the proof of \cite[Thm. 9.2]{Bre11-MR2759829}.
 
 For every $ \sigma >0$ we define $ u_{ \sigma } \coloneqq\zeta_ \sigma \tonde*{ \rho _{ \sigma } \ast \overline{ u }}$
 (where $\overline{ u }$ is the null extension of $ u $ to $ \mathbb{R}^{n} $)
 and $\zeta\in C^{\infty}_{c}\tonde*{\mathbb{R}^{n};[0,1]} $ is a cut-off function
 null in $ \mathbb{R}^{n} \setminus B_{2}\tonde*{0} $ and identically equal to $1$ in $ \overline{ B_{1}\tonde*{0}} $
 and $\zeta_ \sigma ( x )\coloneqq\zeta( \sigma x )$ for every $ \sigma >0$ and $x\in\mathbb{R}^n$.

 We fix $ \tilde\Omega, \Omega' \in \mathcal{A}_{\Omega} $ s.t. $ \tilde{\Omega} \Subset \Omega' \Subset \Omega $ and a function $ v \in C^{\infty}_{c}\tonde*{\mathbb{R}^{n};[0,1]} $ with $ \operatorname{supp}\tonde*{ v } \subset \Omega' $
 s.t $ v \equiv1$ on a neighborhood of $ \tilde{\Omega} $.
 Then let $\overline{ u v }$ be the null extension of $ u v $ to $ \mathbb{R}^{n} $.
 We then observe that $\restr{\tonde*{ \rho _{ \sigma } \ast \overline{ u v }}}{ \tilde{\Omega} 
 }=\restr{\tonde*{ \rho _{ \sigma } \ast \overline{ u }}}{ \tilde{\Omega}}$ for every $ \sigma >0$.

 By \cref{res:DL_e_conv} and \cref{res:DL_Leibniz} we obtain
 that $\graffe*{u_{ \sigma }}_{ \sigma >0}$ is the desired sequence.
\end{proof} 
\section{\texorpdfstring{$p$-core}{p-core} functionals and generalized BMO-type seminorms}\label{sec:genaralBMO}

Given two open sets $ \Omega , D \subseteq \mathbb{R}^{n} $ and $ \Gamma \subseteqq \mathrm{Aff}\tonde*{\mathbb{R}^{n}} $, we denote by, 
\begin{equation*}
 \mathcal{E}^{ D}_{ \Gamma }\tonde*{\Omega} \coloneqq\set*{ F\tonde*{ D } : F \in \Gamma \text{ and } F\tonde*{ D } \subseteq \Omega }, 
\end{equation*}
the class of all subsets of $ \Omega $ obtained by transforming $ D $ 
according to the elements of $ \Gamma $.

We start stating  the main definition of our work.
\begin{definition}[$p$-core functional][def:core_function]
 Let $ D \subseteq \mathbb{R}^{n} $ be a bounded open set, $ m \geq1$ a natural number, $ p \in \left[1,\infty\right) $
 and $ \Gamma $ a subgroup of $ \mathrm{Aff}\tonde*{\mathbb{R}^{n}} $.
 
 We define $ \operatorname{dom}\tonde*{\alpha_{p,D, \Gamma ,m}} \coloneqq L^{p}_{\mathrm{loc}}\tonde*{\mathbb{R}^{n};\mathbb{R}^{m}} \times \mathcal{E}^{ D}_{ \Gamma }\tonde*{\mathbb{R}^{n}} $
 and we let,
 \begin{equation*}
 \alpha_{ p, D, \Gamma ,m} : \operatorname{dom}\tonde*{\alpha_{p,D, \Gamma ,m}} \to \left[0,+\infty\right) 
 \end{equation*}
 be a positive function.

 To avoid heavy notation, when the set $ D $,
 the subgroup $ \Gamma $ and $ m $ are clear from the context
 we shall drop some subscripts in $ \alpha_{ p, D, \Gamma ,m} $.
 For instance, below we will write $ \alpha_{ p} $ instead of $ \alpha_{ p, D, \Gamma ,m} $.

 We say that $ \alpha_{ p} $ is a \emph{ $p$-core functional } (subordinated to the \emph{reference set} $ D $,
 the subgroup $ \Gamma $ and $ m $)
 if it satisfies the following properties.

 Given $ D' \in \mathcal{E}^{ D}_{ \Gamma }\tonde*{\mathbb{R}^{n}} $
 and a certain function in $ L^{p}\tonde*{D';\mathbb{R}^{m}} $,
 $ \alpha_{ p} $ is \emph{extension independent} in the following sense:
 \begin{equation}\label{alpha_p:ext_ind}
 \tag{EI}
 \alpha_{ p}\tonde*{ u , D' } = \alpha_{ p}\tonde*{ v , D' } 
 \text{ }
 \forall D' \in \mathcal{E}^{ D}_{ \Gamma }\tonde*{\mathbb{R}^{n}} 
 \forall u , v \in L^{p}_{\mathrm{loc}}\tonde*{\mathbb{R}^{n};\mathbb{R}^{m}} \text{ s.t. }\restr{ u }{ D' }=\restr{ v }{ D' }.
 \end{equation}
 $ \alpha_{ p} $ is \emph{convex} in its first entry:
 \begin{equation}\label{alpha_p:conv}
 \tag{CX}
 \alpha_{ p}\tonde*{\cdot, D' } \text{ is convex }
 \forall D' \in \mathcal{E}^{ D}_{ \Gamma }\tonde*{\mathbb{R}^{n}} .
 \end{equation}
 $ \alpha_{ p} $ is \emph{$ p $-homogeneous} in its first entry:
 \begin{equation}\label{alpha_p:p-homogeneous}
 \tag{$ p $H}
 \alpha_{ p}\tonde*{ t u , D' } =\abs{ t }^{ p } \alpha_{ p}\tonde*{ u , D' } 
 \text{ }
 \forall\tonde*{ u , D' }\in \operatorname{dom}\tonde*{\alpha_{p}} \,\forall t \in \mathbb{R} .
 \end{equation}
 $ \alpha_{ p} $ is \emph{translation invariant} in its first entry:
 \begin{equation}\label{alpha_p:outer_trans_inv}
 \tag{T1}
 \alpha_{ p}\tonde*{ u + h , D' } = \alpha_{ p}\tonde*{ u , D' } 
 \text{ }
 \forall\tonde*{ u , D' }\in \operatorname{dom}\tonde*{\alpha_{p}} \,\forall h \in \mathbb{R}^{m} .
 \end{equation}
 $ \alpha_{ p} $ satisfies the following \emph{change of variable} property:
 \begin{equation}\label{alpha_p:change_of_variable}
 \tag{CV}
 \alpha_{ p}\tonde*{ u \circ F , D' } = \alpha_{ p}\tonde*{ u , F\tonde*{ D' }} 
 \text{ }
 \forall\tonde*{ u , D' }\in \operatorname{dom}\tonde*{\alpha_{p}} \forall F \in \Gamma .
 \end{equation}
 When $ \alpha_{ p}\tonde*{\cdot, D } $ is tested on a Sobolev function it is
 \emph{bounded} from above by the $ L^{p}\tonde*{D;\mathbb{R}^{m\times n}} $ norm of its \emph{gradient}
 (up to a multiplicative constant):
 \begin{equation}\label{alpha_p:poincare_type_bound}
 \tag{GB}
 \begin{split}
 &\exists c_{1} >0
 \text{ s.t. }
 \alpha_{ p}\tonde*{ u , D } 
 \leq
 c_{1} \int_{ D } \abs*{ \nabla u }^{ p }_{2} \thinspace\mathrm{d} x 
 \text{ }
 \\
 &\forall u \in L^{p}_{\mathrm{loc}}\tonde*{\mathbb{R}^{n};\mathbb{R}^{m}} \text{ s.t. }\restr{ u }{ D }\in W^{1,p}\tonde*{D;\mathbb{R}^{m}} .
 \end{split}
 \end{equation}
 
\end{definition}

We observe that if $ \alpha_{ p} $ is a $p$-core functional subordinated to a certain
$ \Gamma $, subgroup of $ \mathrm{Aff}\tonde*{\mathbb{R}^{n}} $, then it can be naturally seen also as a $p$-core functional subordinated to every 
subgroup of $ \Gamma $.

Let us collect some properties of a $p$-core functional $ \alpha_{ p} $ as in \cref{def:core_function}.
 Let $ D' \in \mathcal{E}^{ D}_{ \Gamma }\tonde*{\mathbb{R}^{n}} $. 
 Then, for every $ u , v \in L^{p}_{\mathrm{loc}}\tonde*{\mathbb{R}^{n};\mathbb{R}^{m}} $,
 \begin{equation}\label{alpha_p:almost_triang}
 \alpha_{ p}\tonde*{ u + v , D' } 
 \leq
 2^{ p -1}\tonde*{ \alpha_{ p}\tonde*{ u , D' } + \alpha_{ p}\tonde*{ v , D' }},
 \end{equation}
 thanks to \eqref{alpha_p:conv} and \eqref{alpha_p:p-homogeneous}.
 In particular, if $ p =1$, \eqref{alpha_p:almost_triang} amounts to a triangular inequality
 for $ u \mapsto \alpha_{ 1}\tonde*{ u , D' } $. This then allows to conclude (using also the symmetry inherited
 from \eqref{alpha_p:p-homogeneous}), that
 \begin{equation*}
 \abs*{ \alpha_{ 1}\tonde*{ u , D' } - \alpha_{ 1}\tonde*{ v , D' }}
 \leq
 \alpha_{ 1}\tonde*{ u - v , D' } , 
 \end{equation*}
 for every $ u , v \in L^{1}_{\mathrm{loc}}\tonde*{\mathbb{R}^{n};\mathbb{R}^{m}} $.

 The following inequalities
 hold for any $ p \in \left[1,\infty\right) $ and are another consequence
 of \eqref{alpha_p:conv} and \eqref{alpha_p:p-homogeneous}.
 \begin{equation*}
 \alpha_{ p}\tonde*{ u + v , D' } 
 \leq
 \tonde*{1+ \delta }^ p \alpha_{ p}\tonde*{ u , D' } 
 +
 \tonde*{\frac{1+ \delta }{ \delta }}^ p \alpha_{ p}\tonde*{ v , D' } 
 \end{equation*}
 \begin{equation}\label{alpha_p:utile_2}
 \alpha_{ p}\tonde*{ u - v , D' } 
 \geq
 \frac{1}{\tonde*{1+ \delta }^ p } \alpha_{ p}\tonde*{ u , D' } 
 -
 \frac{1}{ \delta ^ p } \alpha_{ p}\tonde*{ v , D' } 
 \end{equation}
 for every $ u , v \in L^{p}_{\mathrm{loc}}\tonde*{\mathbb{R}^{n};\mathbb{R}^{m}} $ and $ \delta \in \left(0,1\right) $.

 Furthermore,
 $ \alpha_{ p} $ is zero when evaluated on constant functions,
 \begin{equation}\label{alpha_p:zero_constant}
 \tag{ZC}
 \alpha_{ p}\tonde*{ h , D' } =0
 \text{ }
 \forall h \in \mathbb{R}^{m} ,
 \end{equation}
 thanks to \eqref{alpha_p:outer_trans_inv} and \eqref{alpha_p:p-homogeneous}.

 In addition to that, \eqref{alpha_p:change_of_variable}, \eqref{alpha_p:ext_ind} and \eqref{alpha_p:poincare_type_bound}
 imply
 \begin{equation}\label{alpha_p:better_poinc}
 \begin{split}
 & \alpha_{ p}\tonde*{ u , F\tonde*{ D }} 
 \leq
 c_{1} \frac{ \abs*{ \nabla F }^{ p }_{2}}{\abs*{\det\tonde*{ \nabla F }}} \int_{ F\tonde*{ D }} \abs*{ \nabla \tonde*{\restr{ u }{ F\tonde*{ D }}}}^{ p }_{2} \thinspace\mathrm{d} x 
 \\
 &\forall F \in \Gamma \,\forall u \in L^{p}_{\mathrm{loc}}\tonde*{\mathbb{R}^{n};\mathbb{R}^{m}} \text{ s.t. } \nabla \tonde*{\restr{ u }{ F\tonde*{ D }}} \in L^{p}\tonde*{F\tonde*{D};\mathbb{R}^{m\times n}} . 
 \end{split}
 \end{equation}

In the next definition we will introduce the subgroups of $\mathrm{Aff}\tonde*{\mathbb{R}^{n}}$ which will
be fundamental in defining our class of generalized BMO-type seminorms.
\begin{definition}[][def:Gamma_G]
 Let $ \Omega \subseteq \mathbb{R}^{n} $ be an open set,
 $ D \subseteq \mathbb{R}^{n} $ a bounded open set and
 $ \mathcal{G} $ a subgroup of $ \mathrm{SO}\tonde*{n} $.
 For every $ \varepsilon >0$ we define,
 \begin{align*}
 & \Gamma ^{\mathcal{G}}_{\varepsilon} 
 \coloneqq
 \set*{ \mathbb{R}^{n} \ni x \mapsto \varepsilon \mathcal{R}\tonde*{ x } + h : \mathcal{R} \in \mathcal{G} , h \in \mathbb{R}^{n}}\subset \mathrm{Aff}\tonde*{\mathbb{R}^{n}} ,
 \text{ }
 \Gamma ^{\mathcal{G}} 
 \coloneqq
 \bigcup_{ \varepsilon >0} \Gamma ^{\mathcal{G}}_{\varepsilon} \subset \mathrm{Aff}\tonde*{\mathbb{R}^{n}} ,
 \\
 & \mathcal{G}^{ D}_{\varepsilon}\tonde*{\Omega} \coloneqq \mathcal{E}^{ D }_{ \Gamma ^{\mathcal{G}}_{\varepsilon}}\tonde*{ \Omega } ,
 \text{ }
 \mathcal{G}^{ D}\tonde*{\Omega} \coloneqq \mathcal{E}^{ D }_{ \Gamma ^{\mathcal{G}}}\tonde*{ \Omega } = \bigcup_{ \varepsilon >0} \mathcal{G}^{ D}_{\varepsilon}\tonde*{\Omega} .
 \end{align*}
 We observe that $ \mathcal{G}^{ D}_{\varepsilon}\tonde*{\Omega} \neq\emptyset$ if $ \varepsilon >0$ is small enough.
\end{definition}

Let $ \mathcal{G} $ be a subgroup of $ \mathrm{SO}\tonde*{n} $. Let us collect some properties of a $p$-core functional $ \alpha_{ p} $ be  as in \cref{def:core_function}
 subordinated to $ \Gamma ^{\mathcal{G}} $ as in \cref{def:Gamma_G}. 
 
 \eqref{alpha_p:change_of_variable} implies that
 $ \alpha_{ p} $ satisfies the following property connected to \emph{translations},
 \begin{equation}\label{alpha_p:inner_trans_inv}
 \tag{T2}
 \alpha_{ p}\tonde*{ u , D' } = \alpha_{ p}\tonde*{ u\tonde*{\cdot- h }, D' + h } 
 \text{ }
 \forall\tonde*{ u , D' }\in \operatorname{dom}\tonde*{\alpha_{p}} \,\forall h \in \mathbb{R}^{n} ,
 \end{equation}
 and that
 $ \alpha_{ p} $ satisfies the following \emph{scaling} property:
 \begin{equation}\label{alpha_p:scaling}
 \tag{SC}
 \alpha_{ p}\tonde*{ u\tonde*{\frac{\cdot}{ t }}, D' } = \alpha_{ p}\tonde*{ u ,\frac{ D' }{ t }} 
 \text{ }
 \forall\tonde*{ u , D' }\in \operatorname{dom}\tonde*{\alpha_{p}} \,\forall t >0.
 \end{equation}
 Finally, from \eqref{alpha_p:better_poinc} we can infer that,
 \begin{equation}\label{alpha_p:poincare_+}
 \tag{GB+}
 \begin{split}
 & \alpha_{ p}\tonde*{ u , D' } 
 \leq
 c_{2} \operatorname{diam}\tonde*{ D' } ^{ p - n } \int_{ D' } \abs*{ \nabla \tonde*{\restr{ u }{ D' }}}^{ p }_{2} \thinspace\mathrm{d} x 
 \text{ }
 \\
 &\forall D' \in \mathcal{G}^{ D}\tonde*{\mathbb{R}^{n}} \forall u \in L^{p}_{\mathrm{loc}}\tonde*{\mathbb{R}^{n};\mathbb{R}^{m}} \text{ s.t. } \nabla \tonde*{\restr{ u }{ D' }} \in L^{p}\tonde*{D';\mathbb{R}^{m\times n}} ,
 \end{split}
 \end{equation}
 where $ c_{2} = c_{1} \operatorname{diam}\tonde*{ D } ^{ n - p }$.

\begin{proposition}[][res:Amat_loc_lip]
 Let $ \alpha_{ p} $ be a $p$-core functional as in \cref{def:core_function}.
 Then,
 \begin{equation}\label{alpha_p:convex_su_mat}
 \mathbb{R}^{m\times n} \ni A \mapsto \alpha_{ p}\tonde*{ l^{ A}, D' } \text{ is convex }\forall D' \in \mathcal{E}^{ D}_{ \Gamma }\tonde*{\mathbb{R}^{n}} .
 \end{equation}
 Moreover,
 \begin{equation}\label{alpha_p:locLip_su_mat}
 \mathbb{R}^{m\times n} \ni A \mapsto \alpha_{ p}\tonde*{ l^{ A}, D' } \text{ is locally Lipschitz continuous }\forall D' \in \mathcal{E}^{ D}_{ \Gamma }\tonde*{\mathbb{R}^{n}} .
 \end{equation}
 Assume now that $ \alpha_{ p} $ is subordinated to
 $ \Gamma = \Gamma ^{\mathcal{G}} $ where $ \mathcal{G} $ is a subgroup of $ \mathrm{SO}\tonde*{n} $.
 Then, 
 \begin{equation}\label{eq:loclip_keyfinal}
 \abs*{ \alpha_{ p}\tonde*{ l^{ A}, D' } - \alpha_{ p}\tonde*{ l^{ B}, D' }}\leq \varepsilon ^ p C_{ \Sigma } \abs*{ A - B }_{2} 
 \text{ }
 \forall A , B \in \Sigma \forall \varepsilon >0\forall D' \in \mathcal{G}^{ D}_{\varepsilon}\tonde*{\mathbb{R}^{n}} ,
 \end{equation}
 where
 \begin{equation}\label{def:Sigma}
 \Sigma \coloneqq\set*{ A \in \mathbb{R}^{m\times n} : \abs*{ A }_{2} \leq1},
 \end{equation}
 and $ C_{ \Sigma } >0$ is a positive constant (depending on $ \Sigma $ and $ \alpha_{ p} $).
\end{proposition}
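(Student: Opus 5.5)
Convexity in \eqref{alpha_p:convex_su_mat} is immediate: the map $A\mapsto l^{A}$ is linear from $\mathbb{R}^{m\times n}$ into $L^{p}_{\mathrm{loc}}\tonde*{\mathbb{R}^{n};\mathbb{R}^{m}}$, so $A\mapsto\alpha_{p}\tonde*{l^{A},D'}$ is the composition of a convex function, by \eqref{alpha_p:conv}, with a linear map. Since it is real valued on the finite-dimensional space $\mathbb{R}^{m\times n}$, the classical fact that finite convex functions on $\mathbb{R}^{N}$ are locally Lipschitz gives \eqref{alpha_p:locLip_su_mat}. We still need finiteness: $\alpha_p$ takes values in $[0,+\infty)$ by definition, and in any case \eqref{alpha_p:better_poinc} gives an explicit bound, since $l^{A}$ has constant gradient $A$.

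For the quantitative estimate \eqref{eq:loclip_keyfinal} I would avoid the abstract argument and use \eqref{alpha_p:poincare_+} together with $p$-homogeneity. Fix $\varepsilon>0$ and $D'=\varepsilon\mathcal{R}(D)+h\in\mathcal{G}^{D}_{\varepsilon}\tonde*{\mathbb{R}^{n}}$. Then $\operatorname{diam}(D')=\varepsilon\operatorname{diam}(D)$ and $\abs*{D'}=\varepsilon^{n}\abs*{D}$. Applying \eqref{alpha_p:poincare_+} to $l^{C}$ gives
\begin{equation*}
 \alpha_{p}\tonde*{l^{C},D'}\leq c_{2}\,\varepsilon^{p-n}\operatorname{diam}(D)^{p-n}\,\varepsilon^{n}\abs*{D}\,\abs*{C}_{2}^{p}=c_{1}\abs*{D}\,\varepsilon^{p}\abs*{C}_{2}^{p}
\end{equation*}
for every $C\in\mathbb{R}^{m\times n}$, using $c_2=c_1\operatorname{diam}(D)^{n-p}$. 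Call $M\coloneqq c_{1}\abs*{D}$, a constant independent of $\varepsilon$ and $D'$.

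Next I would use the convexity Lipschitz estimate with explicit constants. Let $g(A)\coloneqq\alpha_{p}\tonde*{l^{A},D'}$, which is convex and bounded by $M\varepsilon^{p}$ on the ball $\set*{\abs*{C}_{2}\leq2}$ (with a factor $2^{p}$ absorbed into $M$). The standard lemma says that if a convex $g$ satisfies $0\leq g\leq K$ on $B_{2}$, then $g$ is $K$-Lipschitz on $B_{1}$ (with respect to $\abs*{\cdot}_{2}$), up to a factor $2$.

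To prove it, take $A,B\in\Sigma$ with $A\neq B$ and set $C\coloneqq B+\frac{B-A}{\abs*{B-A}_{2}}$, which lies in $B_{2}$. Then $B$ is a convex combination of $A$ and $C$ with weight $\lambda=\frac{\abs*{B-A}_{2}}{1+\abs*{B-A}_{2}}$ on $C$. Convexity gives
\begin{equation*}
 g(B)-g(A)\leq\lambda\tonde*{g(C)-g(A)}\leq\abs*{B-A}_{2}\,K,
\end{equation*}
and the reverse inequality follows by symmetry. With $K=2^{p}M\varepsilon^{p}$ this yields \eqref{eq:loclip_keyfinal} with $C_{\Sigma}=2^{p}c_{1}\abs*{D}$.

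The only delicate point is ensuring that the constant is uniform in $\varepsilon$ and in the rotation $\mathcal{R}$. This is exactly where the restriction $\Gamma=\Gamma^{\mathcal{G}}$ enters: $\abs*{\nabla F}_{2}=\varepsilon$ and $\abs*{\det\nabla F}=\varepsilon^{n}$, so the factor $\varepsilon^{p}$ comes out cleanly through \eqref{alpha_p:better_poinc}/\eqref{alpha_p:poincare_+}. The norm $\abs*{\cdot}_{2}$ is the operator norm, but since $\nabla l^{C}=C$ is constant, this causes no trouble.
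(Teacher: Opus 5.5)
Your proof is correct. The convexity and local Lipschitz parts match the paper, but for the quantitative estimate \eqref{eq:loclip_keyfinal} you take a genuinely different route.

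The paper fixes a local Lipschitz constant $C_\Sigma$ of the single convex function $A\mapsto\alpha_p(l^A,D)$ on $\Sigma$, at the reference set only. It then transports this constant to every $D'=\varepsilon\mathcal{R}D+h$ through the exact identity $\alpha_p(l^A,D')=\varepsilon^p\alpha_p(l^{A\mathcal{R}},D)$, which follows from (CV), (T1) and ($p$H). The remaining ingredients are $A\mathcal{R}\in\Sigma$ and $\abs{(A-B)\mathcal{R}}_2=\abs{A-B}_2$.

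You work directly on $D'$ instead. (GB+) gives the uniform bound $0\le\alpha_p(l^C,D')\le c_1\abs{D}\,\varepsilon^p\abs{C}_2^p$. You then apply the elementary lemma that a convex $g$ with $0\le g\le K$ on the ball of radius $2$ is $K$-Lipschitz on $\Sigma$. Your proof of that lemma, via $C=B+\frac{B-A}{\abs{B-A}_2}$, is correct and works for any norm. This yields the explicit constant $C_\Sigma=2^pc_1\abs{D}$.

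Your route has two advantages: the constant is given explicitly in terms of the (GB) constant, and uniformity in $\varepsilon$ and $\mathcal{R}$ comes out automatically without reducing to $D$.

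It also has costs. It genuinely uses (GB), whereas the paper's argument needs only (CX), finiteness, (CV), (T1) and ($p$H). Your constant is in general cruder than the true Lipschitz constant of $A\mapsto\alpha_p(l^A,D)$ on $\Sigma$; that true constant is exactly the quantity the paper wants to appear in \eqref{eq:loc_lip_fine}. The paper's route also produces the scaling identity $\alpha_p(l^A,D')=\varepsilon^p\alpha_p(l^{A\mathcal{R}},D)$, which is reused in the proof of \cref{res:formula_psi}.

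As a minor point, $p$-homogeneity plays no role in your argument, despite your announcement that you would combine it with (GB+).
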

\begin{proof}
 Thanks to \eqref{alpha_p:conv} and the linearity of the map $ \mathbb{R}^{m\times n} \ni A \mapsto l^{ A} \in L^{p}_{\mathrm{loc}}\tonde*{\mathbb{R}^{n};\mathbb{R}^{m}} $,
 we observe that \eqref{alpha_p:convex_su_mat} holds.
 Since $ \mathbb{R}^{m\times n} $ is a finite-dimensional normed vector space, \eqref{alpha_p:convex_su_mat} implies
 \eqref{alpha_p:locLip_su_mat}.
 
 Assume now that $ \Gamma = \Gamma ^{\mathcal{G}} $ where $ \mathcal{G} $ is a subgroup of $ \mathrm{SO}\tonde*{n} $.
 Thanks to \eqref{alpha_p:locLip_su_mat} we can find a constant $ C_{ \Sigma } >0$
 (depending on $ \Sigma $ and $ \alpha_{ p} $) such that
 \begin{equation}\label{eq:loclip_key1}
 \abs*{ \alpha_{ p}\tonde*{ l^{ A}, D } - \alpha_{ p}\tonde*{ l^{ B}, D }}\leq C_{ \Sigma } \abs*{ A - B }_{2} 
 \text{ }
 \forall A , B \in \Sigma .
 \end{equation}
 Let $ \varepsilon >0$ and $ D' \in \mathcal{G}^{ D}_{\varepsilon}\tonde*{\mathbb{R}^{n}} $.
 Then, $ D' = \varepsilon \mathcal{R} D + h $ for some $ \mathcal{R} \in \mathcal{G} $ and $ h \in \mathbb{R}^{n} $.
 By \eqref{alpha_p:change_of_variable}, \eqref{alpha_p:outer_trans_inv} and
 \eqref{alpha_p:p-homogeneous} we find,
 \begin{equation}\label{eq:loclip_key2}
 \alpha_{ p}\tonde*{ l^{ A}, D' } 
 =
 \alpha_{ p}\tonde*{ l^{ A}\tonde*{ \varepsilon \mathcal{R}\tonde*{\cdot} + h }, D } 
 =
 \varepsilon ^ p \alpha_{ p}\tonde*{ l^{ A \mathcal{R}}, D } \text{ }\forall A \in \mathbb{R}^{m\times n} .
 \end{equation}
 Combining \eqref{eq:loclip_key1} and \eqref{eq:loclip_key2} we obtain
 \eqref{eq:loclip_keyfinal},
 noting that $ A \mathcal{R} \in \Sigma $ for every $ \mathcal{R} \in \mathrm{SO}\tonde*{n} $ and $ A \in \Sigma $.
\end{proof}
We are now ready to define the generalized BMO-type seminorms
which will be the focus of our work.
\begin{definition}[BMO-type seminorm functional][def:tipo_BMO] 
 Let $ \Omega \subseteq \mathbb{R}^{n} $ be an open set,
 which we will refer to as the \emph{ambient space},
 and $ \mathcal{G} $ be a subgroup of $ \mathrm{SO}\tonde*{n} $.
 Recalling \cref{def:Gamma_G}, let $ \alpha_{ p} = \alpha_{ p , D, \Gamma ^{\mathcal{G}} ,m } $ be a $p$-core functional as in \cref{def:core_function}
 with 
 $ \operatorname{dom}\tonde*{\alpha_{p}} = L^{p}_{\mathrm{loc}}\tonde*{\mathbb{R}^{n};\mathbb{R}^{m}} \times \mathcal{G}^{ D}\tonde*{\mathbb{R}^{n}} $.
 For every $ u \in L^{p}_{\mathrm{loc}}\tonde*{\mathbb{R}^{n};\mathbb{R}^{m}} $ we define,
 \begin{equation}\label{12d1}
 G^{\alpha_{p}}_{ \varepsilon }\tonde*{ u,\Omega} 
 \coloneqq
 \varepsilon ^{ n - p } \sup\limits_{\mathcal{G}_{\varepsilon}} \sum_{ D'\in\mathcal{G}_{\varepsilon}} \alpha_{ p}\tonde*{ u , D' } ,
 \end{equation}
 where each $ \mathcal{G}_{\varepsilon} $ is a family made of pairwise disjoint elements of $ \mathcal{G}^{ D}_{\varepsilon}\tonde*{\Omega} $ 
 and the supremum is taken with respect to all of these families $ \mathcal{G}_{\varepsilon} $.
 We observe that the cardinality of every family $ \mathcal{G}_{\varepsilon} $ 
 is at most countable and it is bounded from above by $ \abs*{ \Omega } /\tonde*{ \varepsilon ^ n \abs*{ D }}$.
 For convenience, if $ \mathcal{G}_{\varepsilon} =\emptyset$, we let the supremum in \eqref{12d1} be equal to $0$.
 We define,
 \begin{align*}
 & G^{ \alpha_{ p}}_{ +}\tonde*{ u,\Omega } \coloneqq \limsup\limits_{\varepsilon\to 0^{+}} G^{\alpha_{p}}_{ \varepsilon }\tonde*{ u,\Omega } ,
 \\
 & G^{ \alpha_{ p}}_{ -}\tonde*{ u,\Omega } \coloneqq \liminf\limits_{\varepsilon\to 0^{+}} G^{\alpha_{p}}_{ \varepsilon }\tonde*{ u,\Omega } ,
 \end{align*}
 for every $ u \in L^{p}_{\mathrm{loc}}\tonde*{\mathbb{R}^{n};\mathbb{R}^{m}} $.
 We also define,
 \begin{equation*}
 G^{ \alpha_{ p}}\tonde*{ u,\Omega } 
 \coloneqq
 \lim\limits_{\varepsilon\to 0^{+}} G^{\alpha_{p}}_{ \varepsilon }\tonde*{ u,\Omega } ,
 \end{equation*}
 for every $ u \in L^{p}_{\mathrm{loc}}\tonde*{\mathbb{R}^{n};\mathbb{R}^{m}} $ such that
 $ G^{ \alpha_{ p}}_{ -}\tonde*{ u,\Omega } = G^{ \alpha_{ p}}_{ +}\tonde*{ u,\Omega } $.
 
 We finally observe that for every $ \varepsilon >0$, thanks to \eqref{alpha_p:ext_ind},
 \begin{align}\label{tipo_BMO:ext_ind}
 & G^{\alpha_{p}}_{ \varepsilon }\tonde*{ u , \Omega } = G^{\alpha_{p}}_{ \varepsilon }\tonde*{ v , \Omega } ,
 & G^{ \alpha_{ p}}_{\pm}\tonde*{ u, \Omega } = G^{ \alpha_{ p}}_{\pm}\tonde*{ v , \Omega } ,
 \end{align}
 for all $ u , v \in L^{p}_{\mathrm{loc}}\tonde*{\mathbb{R}^{n};\mathbb{R}^{m}} $ such that $\restr{ u }{ \Omega }=\restr{ v }{ \Omega }$.
\end{definition}

\begin{remark}[][res:tip_BMO_bound]
 Under the assumptions of \cref{def:tipo_BMO},
 if $ u \in L^{p}_{\mathrm{loc}}\tonde*{\mathbb{R}^{n};\mathbb{R}^{m}} $ and $ \nabla \restr{ u }{ \Omega } \in L^{p}\tonde*{\Omega;\mathbb{R}^{m\times n}} $
 thanks to \eqref{alpha_p:poincare_+}
 we obtain that
 \begin{equation*}
 \begin{split}
 G^{\alpha_{p}}_{ \varepsilon }\tonde*{ u,\Omega } 
 &\leq
 \varepsilon ^{ n - p } \sup\limits_{\mathcal{G}_{\varepsilon}} \sum_{ D'\in\mathcal{G}_{\varepsilon}} c_{2} \operatorname{diam}\tonde*{ D' } ^{ p - n } \int_{ D' } \abs*{ \nabla u }^{ p }_{2} \thinspace\mathrm{d} x 
 \\
 &\leq
 c_{1} \int_{ \Omega } \abs*{ \nabla u }^{ p }_{2} \thinspace\mathrm{d} x ,
 \end{split}
 \end{equation*}
 for every $ \varepsilon >0$.
 Therefore,
 \begin{equation*}
 G^{ \alpha_{ p}}_{\pm}\tonde*{ u,\Omega } \leq c_{1} \int_{ \Omega } \abs*{ \nabla u }^{ p }_{2} \thinspace\mathrm{d} x .
 \end{equation*}
\end{remark}
We list some properties of $ G^{\alpha_{p}}_{ \varepsilon } $
and $ G^{ \alpha_{ p}}_{\pm} $, omitting the elementary
proofs.

\begin{proposition}
 Let $ \Omega \subseteq \mathbb{R}^{n} $ be an open set, $ m \geq1$ a natural number, $ p \in \left[1,\infty\right) $,
 $ \mathcal{G} $ a subgroup of $ \mathrm{SO}\tonde*{n} $ and $ D \subseteq \mathbb{R}^{n} $ be a bounded open set.
 Let $ \alpha_{ p} = \alpha_{ p , D, \Gamma ^{\mathcal{G}} ,m } $ 
 be a $p$-core functional as in \cref{def:core_function}
 subordinated to $ \Gamma ^{\mathcal{G}} $ as in \cref{def:Gamma_G}.
 Let $ \varepsilon >0$ and $ u \in L^{p}_{\mathrm{loc}}\tonde*{\mathbb{R}^{n};\mathbb{R}^{m}} $ and $ A \in \mathbb{R}^{m\times n} $.
 Recalling \cref{def:tipo_BMO},
 for every $ \lambda >0$ it holds,
 \begin{align*}
 & \lambda ^{ n - p } G^{ \alpha_{ p , \lambda D , \Gamma ^{\mathcal{G}}}}_{ \varepsilon }\tonde*{ u,\Omega } 
 =
 G^{ \alpha_{ p}}_{ \varepsilon \lambda }\tonde*{ u,\Omega } ,
 & \lambda ^{ n - p } G^{ \alpha_{ p , \lambda D , \Gamma ^{\mathcal{G}}}}_{\pm}\tonde*{ u,\Omega } 
 =
 G^{ \alpha_{ p}}_{\pm}\tonde*{ u,\Omega } ,
 \end{align*}
 where 
 $ \alpha_{ p , \lambda D , \Gamma ^{\mathcal{G}}} $ is the $p$-core functional
 defined by,
 \begin{equation*}
 \alpha_{ p , \lambda D , \Gamma ^{\mathcal{G}}}\tonde*{ u , D' } 
 \coloneqq
 \alpha_{ p}\tonde*{ u ,\frac{1}{ \lambda } D' } ,
 \end{equation*}
 for every $ D' \in \mathcal{G}^{ \lambda D }\tonde*{ \mathbb{R}^{n}} = \mathcal{G}^{ D}\tonde*{\mathbb{R}^{n}} $.

 For every $ t >0$, 
 \begin{align}
 &\notag
 t ^{ n - p } G^{ \alpha_{ p}}_{ \varepsilon }\tonde*{ u,\Omega } 
 =
 G^{ \alpha_{ p}}_{ t \varepsilon }\tonde*{ u\tonde*{\cdot/ t } , t \Omega } ,
 &&
 t ^{ n - p } G^{ \alpha_{ p}}_{\pm}\tonde*{ u,\Omega } 
 =
 G^{ \alpha_{ p}}_{\pm}\tonde*{ u\tonde*{\cdot/ t } , t \Omega } ,
 \\ & \label{eq:G_lA-scaling} 
 t ^{ n } G^{ \alpha_{ p}}_{ \varepsilon }\tonde*{ l^{ A} , \Omega } 
 =
 G^{ \alpha_{ p}}_{ t \varepsilon }\tonde*{ l^{ A} , t \Omega } ,
 &&
 t ^{ n } G^{ \alpha_{ p}}_{\pm}\tonde*{ l^{ A} , \Omega } 
 =
 G^{ \alpha_{ p}}_{\pm}\tonde*{ l^{ A} , t \Omega } .
 \end{align}

 For every $ h \in \mathbb{R}^{n} $, 
 \begin{align}
 & \notag
 G^{ \alpha_{ p}}_{ \varepsilon }\tonde*{ u,\Omega } 
 =
 G^{ \alpha_{ p}}_{ \varepsilon }\tonde*{ u\tonde*{\cdot- h } , \Omega + h } ,
 &&
 G^{ \alpha_{ p}}_{\pm}\tonde*{ u,\Omega } 
 =
 G^{ \alpha_{ p}}_{\pm}\tonde*{ u\tonde*{\cdot- h } , \Omega + h } ,
 \\
 & \label{eq:G_lA-trans}
 G^{ \alpha_{ p}}_{ \varepsilon }\tonde*{ l^{ A} , \Omega } 
 =
 G^{ \alpha_{ p}}_{ \varepsilon }\tonde*{ l^{ A} , \Omega + h } ,
 &&
 G^{ \alpha_{ p}}_{\pm}\tonde*{ l^{ A} , \Omega } 
 =
 G^{ \alpha_{ p}}_{\pm}\tonde*{ l^{ A} , \Omega + h } .
 \end{align}
\end{proposition}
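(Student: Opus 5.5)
The plan is to prove each identity by exhibiting an explicit bijection between the admissible families appearing in the two suprema, and checking that it preserves disjointness and transforms each summand in a controlled way. For the first pair of identities, note that $\mathcal{G}^{\lambda D}_{\varepsilon}\tonde*{\Omega}=\mathcal{G}^{D}_{\varepsilon\lambda}\tonde*{\Omega}$. Indeed, a set of the form $\varepsilon\mathcal{R}(\lambda D)+h$ is the same as $(\varepsilon\lambda)\mathcal{R}D+h$. Hence the admissible families $\mathcal{G}_{\varepsilon}$ for the reference set $\lambda D$ are exactly the admissible families $\mathcal{G}_{\varepsilon\lambda}$ for $D$.

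I first check that $\alpha_{p,\lambda D,\Gamma^{\mathcal{G}}}$ is a $p$-core functional. Properties \eqref{alpha_p:ext_ind}, \eqref{alpha_p:conv}, \eqref{alpha_p:p-homogeneous} and \eqref{alpha_p:outer_trans_inv} are inherited directly. For \eqref{alpha_p:change_of_variable}, use that conjugation by $x\mapsto x/\lambda$ maps $\Gamma^{\mathcal{G}}$ into itself. \eqref{alpha_p:poincare_type_bound} on $\lambda D$ follows from \eqref{alpha_p:better_poinc} applied with $F(x)=\lambda x$.

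There is one point to settle about the summands. The definition literally gives $\alpha_{p,\lambda D}(u,D')=\alpha_p(u,D'/\lambda)$, with the function $u$ left unchanged, which does not produce the scaling factor. I would interpret $\alpha_{p,\lambda D}(u,D')$ as $\alpha_p(u,D')$ evaluated with the reference set relabelled, so that the summands coincide. The factor $\lambda^{n-p}$ then comes only from the prefactor, since $\varepsilon^{n-p}\lambda^{n-p}=(\varepsilon\lambda)^{n-p}$. Summing over the common families and taking suprema gives the first identity. Taking $\limsup$ and $\liminf$ as $\varepsilon\to0^+$, together with the fact that $\varepsilon\mapsto\varepsilon\lambda$ is a homeomorphism of $(0,\infty)$, gives the identities for $G_{\pm}$. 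Settling this interpretation is the only genuinely delicate point I see. Everything else is bookkeeping.

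For the dilation identities, the map $D'\mapsto tD'$ is a bijection from $\mathcal{G}^D_\varepsilon(\Omega)$ onto $\mathcal{G}^D_{t\varepsilon}(t\Omega)$, and it preserves pairwise disjointness. By \eqref{alpha_p:scaling}, $\alpha_p(u(\cdot/t),tD')=\alpha_p(u,D')$. Therefore
$G^{\alpha_p}_{t\varepsilon}(u(\cdot/t),t\Omega)=(t\varepsilon)^{n-p}\sup\sum\alpha_p(u,D')=t^{n-p}G^{\alpha_p}_\varepsilon(u,\Omega)$.
For $l^A$, apply the same identity with $u=l^{tA}$, so that $u(\cdot/t)=l^A$. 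By \eqref{alpha_p:p-homogeneous}, $\alpha_p(l^{tA},D')=t^p\alpha_p(l^A,D')$, and this gives \eqref{eq:G_lA-scaling}.

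For the translation identities, use the bijection $D'\mapsto D'+h$ from $\mathcal{G}^D_\varepsilon(\Omega)$ onto $\mathcal{G}^D_\varepsilon(\Omega+h)$ together with \eqref{alpha_p:inner_trans_inv}. For $l^A$, observe that $l^A(\cdot-h)=l^A-Ah$. By \eqref{alpha_p:outer_trans_inv}, $\alpha_p(l^A-Ah,\cdot)=\alpha_p(l^A,\cdot)$, which yields \eqref{eq:G_lA-trans}.

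In every case the statements for $G_\pm$ follow by passing to $\limsup$ and $\liminf$, since the rescaling $\varepsilon\mapsto t\varepsilon$ (or the identity in $\varepsilon$) preserves the limit $\varepsilon\to0^+$.
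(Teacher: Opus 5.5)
Your argument is correct and is exactly the elementary proof the paper omits: identify $\mathcal{G}^{\lambda D}_{\varepsilon}(\Omega)=\mathcal{G}^{D}_{\varepsilon\lambda}(\Omega)$, transport admissible families by $D'\mapsto tD'$ or $D'\mapsto D'+h$ (both preserve disjointness), and compare summands via (SC), (T2), ($p$H) and (T1). Your reading of $\alpha_{p,\lambda D,\Gamma^{\mathcal{G}}}$ as $\alpha_p$ itself on the common domain (equivalently, by (SC), $\alpha_p(u(\lambda\,\cdot),\frac{1}{\lambda}D')$) is indeed forced: with the literal formula $\alpha_p(u,\frac{1}{\lambda}D')$ the functional violates (EI), and (SC) with ($p$H) gives $\lambda^{n-p}G^{\alpha_{p,\lambda D,\Gamma^{\mathcal{G}}}}_{\varepsilon}(l^{A},\Omega)=\lambda^{-p}G^{\alpha_p}_{\varepsilon\lambda}(l^{A},\Omega)$, so the first identity would fail whenever the right-hand side is nonzero and $\lambda\neq1$.
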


\begin{proposition}[][res:prop_funz_insieme]
 Let $ \Omega \subseteq \mathbb{R}^{n} $ be an open set, $ m \geq1$ a natural number, $ p \in \left[1,\infty\right) $,
 $ \mathcal{G} $ a subgroup of $ \mathrm{SO}\tonde*{n} $ and $ D \subseteq \mathbb{R}^{n} $ be a bounded open set.
 Let $ \alpha_{ p} = \alpha_{ p , D, \Gamma ^{\mathcal{G}} ,m } $ 
 be a $p$-core functional as in \cref{def:core_function}
 subordinated to $ \Gamma ^{\mathcal{G}} $ as in \cref{def:Gamma_G}.
 Let $ \varepsilon >0$ and $ u \in L^{p}_{\mathrm{loc}}\tonde*{\mathbb{R}^{n};\mathbb{R}^{m}} $.

 Recalling \cref{def:tipo_BMO}, we define the functionals acting on open subsets of $\Omega$,
 \begin{align*}
 & \mathcal{A}_{\Omega} \ni \tilde{\Omega} \mapsto g_{\varepsilon}\tonde*{ \tilde{\Omega}} \coloneqq G^{ \alpha_{ p}}_{ \varepsilon }\tonde*{ u , \tilde{\Omega}} ,
 & \mathcal{A}_{\Omega} \ni \tilde{\Omega} \mapsto g_{\pm}\tonde*{ \tilde{\Omega}} \coloneqq G^{ \alpha_{ p}}_{\pm}\tonde*{ u , \tilde{\Omega}} .
 \end{align*}
 Then,
 \begin{enumerate}
 \item\label{it:decreasing} $ g_{\varepsilon} , g_{\pm} $ are monotone non-decreasing with respect to set inclusion,
 \item\label{it:superadditive} $ g_{\varepsilon} , g_{-} $ are superadditive:
 \begin{align*}
 & g_{\varepsilon}\tonde*{ \Omega_{1} \cup \Omega_{2}} \geq g_{\varepsilon}\tonde*{ \Omega_{1}} + g_{\varepsilon}\tonde*{ \Omega_{2}} ,
 & g_{-}\tonde*{ \Omega_{1} \cup \Omega_{2}} \geq g_{-}\tonde*{ \Omega_{1}} + g_{-}\tonde*{ \Omega_{2}} ,
 \end{align*}
 for every $ \Omega_{1} , \Omega_{2} \in \mathcal{A}_{\Omega} $ s.t. $ \Omega_{1} \cap \Omega_{2} =\emptyset$.
 \item if $ \nabla \restr{ u }{ \Omega } \in L^{p}\tonde*{\Omega;\mathbb{R}^{m\times n}} $, then,
 \begin{enumerate}[label=\textnormal{\textit{(\roman*)}}]
 \item\label{it:supremum} $ g_{+} $ is inner-regular in the following sense:
 \begin{align}
 g_{+}\tonde*{ \tilde{\Omega}} &= \sup\limits_{ } \set*{ g_{+}\tonde*{ \Omega' } : \Omega' \Subset \tilde{\Omega} , \Omega' \in \mathcal{A}_{\Omega}}\text{ for all } \tilde{\Omega} \in \mathcal{A}_{\Omega} .\label{eq:sup_gpiu}
 \end{align}
 \item\label{it:subadditive} $ g_{+} $ is $ \sigma $-subadditive:
 \begin{equation*}
 g_{+}\tonde*{ \bigcup_{ k \in \mathbb{N}} \Omega_{ k }} \leq\sommatoria[ k \in \mathbb{N}]{ g_{+}\tonde*{ \Omega_{ k }}},
 \end{equation*}
 for all sequences $ \graffe*{\Omega_{k}}_{k\in\mathbb{N}} \subset \mathcal{A}_{\Omega} $.
 \end{enumerate}
 \end{enumerate}
\end{proposition}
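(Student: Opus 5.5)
Monotonicity and superadditivity are direct consequences of the definition, so I would dispatch them first and concentrate on the two claims that use $\nabla u\in L^p$.

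\emph{Monotonicity.} If $\Omega_1\subseteq\Omega_2$, every admissible disjoint family $\mathcal{G}_\varepsilon\subset\mathcal{G}^D_\varepsilon(\Omega_1)$ is also admissible for $\Omega_2$. Since $\alpha_p\geq0$, the supremum over families in $\Omega_2$ is at least the one over families in $\Omega_1$. Hence $g_\varepsilon(\Omega_1)\le g_\varepsilon(\Omega_2)$, and taking $\limsup$ or $\liminf$ gives the same for $g_\pm$.

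\emph{Superadditivity.} Let $\Omega_1\cap\Omega_2=\emptyset$. Take disjoint families $\mathcal{G}^1_\varepsilon$ in $\Omega_1$ and $\mathcal{G}^2_\varepsilon$ in $\Omega_2$. Their union is a disjoint family in $\Omega_1\cup\Omega_2$, and the sums add. Taking suprema separately gives $g_\varepsilon(\Omega_1\cup\Omega_2)\ge g_\varepsilon(\Omega_1)+g_\varepsilon(\Omega_2)$. Then $\liminf(a_\varepsilon+b_\varepsilon)\ge\liminf a_\varepsilon+\liminf b_\varepsilon$ gives the claim for $g_-$. This argument fails for $g_+$, which is why only $g_\varepsilon,g_-$ appear in the statement.

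The core of the proof is a localization estimate, assuming $\nabla u\in L^p(\Omega)$. Fix $\tilde\Omega$ and open sets $A,B\subseteq\tilde\Omega$ with $A\cup B=\tilde\Omega$. For a disjoint family $\mathcal{G}_\varepsilon$ in $\tilde\Omega$, every $D'$ has diameter $\varepsilon\operatorname{diam}D$ and lies either in $A$, in $B$, or in neither. Sort them as follows:
\begin{itemize}
\item those contained in $A$ go to the $A$-sum;
\item the remaining ones contained in $B$ go to the $B$-sum;
\item the rest meet both $\tilde\Omega\setminus A$ and $\tilde\Omega\setminus B$.
\end{itemize}
To handle the third class, I would first work with $A\cup B\supseteq$ a set where the bad elements are forced into a thin strip. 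Concretely, for inner regularity, given $\Omega'\Subset\tilde\Omega$ set $A=\Omega'$ and $B=\tilde\Omega\setminus\overline{K}$ with $\Omega'\Supset K$ compact. Each bad element then lies in $S_\varepsilon=\{x\in\tilde\Omega:\operatorname{dist}(x,\partial\Omega')<\varepsilon\operatorname{diam}D\}$, and neither sub-case is actually needed.

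Using (GB+) as in \cref{res:tip_BMO_bound}, the leftover elements contribute at most $c_1\int_{S_\varepsilon\cup B}|\nabla u|^p$. So
\[
g_\varepsilon(\tilde\Omega)\le g_\varepsilon(\Omega')+c_1\int_{\tilde\Omega\setminus K'}|\nabla u|_2^p\,\mathrm{d}x
\]
for $\varepsilon$ small, where $K'$ is a compact set shrinking to nothing relevant. More simply: I would choose $\Omega''\Subset\Omega'$ with $\varepsilon\operatorname{diam}D<\operatorname{dist}(\Omega'',\partial\Omega')$, so that every $D'$ meeting $\Omega''$ lies in $\Omega'$. Every other $D'$ lies in $\tilde\Omega\setminus\overline{\Omega''}$, and (GB+) gives
\[
g_\varepsilon(\tilde\Omega)\le g_\varepsilon(\Omega')+c_1\int_{\tilde\Omega\setminus\overline{\Omega''}}|\nabla u|_2^p\,\mathrm{d}x.
\]
Taking $\limsup_{\varepsilon\to0^+}$ and then letting $\Omega''\uparrow\tilde\Omega$ (possible since $|\nabla u|^p\in L^1$) yields $g_+(\tilde\Omega)\le\sup_{\Omega'\Subset\tilde\Omega}g_+(\Omega')$. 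The reverse inequality is monotonicity, which proves \eqref{eq:sup_gpiu}.

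\emph{$\sigma$-subadditivity.} This is the main obstacle, because $g_+$ is only controlled inside compact subsets. By inner regularity it suffices to bound $g_+(\Omega')$ for $\Omega'\Subset U=\bigcup_k\Omega_k$. By compactness, $\overline{\Omega'}\subset\Omega_1\cup\dots\cup\Omega_N$. I would then run the two-set splitting inductively, proving $g_+(\Omega')\le g_+(A)+g_+(B)$ whenever $\overline{\Omega'}\subset A\cup B$.

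To show the two-set bound, take $\delta>0$ smaller than the Lebesgue number of the cover $\{A,B\}$ of $\overline{\Omega'}$. Once $\varepsilon\operatorname{diam}D<\delta$, each $D'\subset\Omega'$ has diameter below $\delta$ and lies entirely in $A$ or in $B$. Send those in $A$ to the $A$-family and the others to the $B$-family; both are disjoint subfamilies. This gives $g_\varepsilon(\Omega')\le g_\varepsilon(A)+g_\varepsilon(B)$, and $\limsup$ of a sum is at most the sum of the $\limsup$'s. In this step the gradient hypothesis is not even needed; it enters only through inner regularity. Induction gives $g_+(\Omega')\le\sum_{k\le N}g_+(\Omega_k)$, and taking the sup over $\Omega'$ finishes the proof.
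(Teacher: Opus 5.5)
Your proof is correct and follows the standard argument that the paper invokes by deferring to \cite[Prop.~3.1]{FarFusGuaSch20-MR4062330}: for small $\varepsilon$, elements meeting an inner set $\Omega''$ are forced into $\Omega'$; the remaining ones are controlled by the bound of \cref{res:tip_BMO_bound} on $\tilde\Omega\setminus\overline{\Omega''}$; and $\sigma$-subadditivity follows from inner regularity, compactness and a Lebesgue-number splitting. Two cosmetic points: drop the garbled first attempt at inner regularity (the one with $S_\varepsilon$ and $K'$) in favour of your ``more simply'' version, and note that the induction as stated needs inner regularity again for $\Omega_2\cup\dots\cup\Omega_N$, which you can avoid by taking the Lebesgue number of the whole finite cover $\{\Omega_1,\dots,\Omega_N\}$ of $\overline{\Omega'}$ and assigning each element to the first $\Omega_k$ that contains it.
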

\begin{proof}
 It is elementary to obtain \eqref{it:decreasing} and \eqref{it:superadditive}.
 For the proof of \ref{it:supremum} and \ref{it:subadditive} we refer
 to the proof of \cite[Prop. 3.1]{FarFusGuaSch20-MR4062330}
 taking into account \cref{res:tip_BMO_bound}.
\end{proof}
We observe that, for a given linear map $ l^{ A}\tonde*{ x } = A x $,
the functionals $ G^{ \alpha_{ p}}_{\pm}\tonde*{ l^{ A} ,\cdot } $
coincide if they act on any unitary cube centered in the origin.
The following proposition is obtained by retracing \cite[Prop. 3.2]{FarFusGuaSch20-MR4062330}.
\begin{proposition}[][res:well-def_psi]
 Let $ \Omega \subseteq \mathbb{R}^{n} $ be an open set, $ m \geq1$ a natural number, $ p \in \left[1,\infty\right) $,
 $ \mathcal{G} $ a subgroup of $ \mathrm{SO}\tonde*{n} $ and $ D \subseteq \mathbb{R}^{n} $ be a bounded open set.
 Let $ \alpha_{ p} = \alpha_{ p , D, \Gamma ^{\mathcal{G}} ,m } $ 
 be a $p$-core functional as in \cref{def:core_function}
 subordinated to $ \Gamma ^{\mathcal{G}} $ as in \cref{def:Gamma_G}.
 For any unitary cube $ \tilde{Q} $ centered in the origin,
 \begin{equation*}
 \sup\limits_{0< \varepsilon \leq1} G^{ \alpha_{ p}}_{ \varepsilon }\tonde*{ l^{ A} , \tilde{Q}} 
 =
 G^{ \alpha_{ p}}\tonde*{ l^{ A} , \tilde{Q}} 
 =
 G^{ \alpha_{ p}}\tonde*{ l^{ A} , Q } ,
 \end{equation*}
 for all $ A \in \mathbb{R}^{m\times n} $.
\end{proposition}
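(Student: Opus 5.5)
The plan is to prove the two equalities separately: first that $\varepsilon\mapsto G^{\alpha_p}_\varepsilon(l^A,\tilde Q)$ has a limit equal to its supremum over $(0,1]$, then that this limit does not depend on the orientation of the unit cube. Fix $A$ and a unitary cube $\tilde Q$ centered at the origin, and set $f(\varepsilon)\coloneqq G^{\alpha_p}_\varepsilon(l^A,\tilde Q)$. Since $\nabla l^A=A$, \cref{res:tip_BMO_bound} gives $f(\varepsilon)\le c_1\abs*{A}^p_2$ for every $\varepsilon>0$, so all quantities below are finite.

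\textbf{Step 1: a monotonicity-under-rescaling inequality.} Fix $\eta\in(0,1]$ and let $\varepsilon\in(0,\eta)$. Set $k\coloneqq\lfloor \eta/\varepsilon\rfloor\ge1$ and $t\coloneqq k\varepsilon/\eta\in(1-\varepsilon/\eta,1]$. Since $\tilde Q$ is centered at the origin, $t\tilde Q\subseteq\tilde Q$. By monotonicity (\cref{res:prop_funz_insieme}, item \eqref{it:decreasing}) and the scaling identity \eqref{eq:G_lA-scaling},
\begin{equation*}
 f(\varepsilon)\ge G^{\alpha_p}_\varepsilon(l^A,t\tilde Q)=t^n G^{\alpha_p}_{\eta/k}(l^A,\tilde Q)=t^n f(\eta/k).
\end{equation*}
Next I compare $f(\eta/k)$ with $f(\eta)$. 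Subdivide $\tilde Q$ into $k^n$ pairwise disjoint open translates of $\tilde Q/k$; they fill $\tilde Q$ up to a null set. Superadditivity (item \eqref{it:superadditive}) together with \eqref{eq:G_lA-trans} and \eqref{eq:G_lA-scaling} (with $t=1/k$) yields
\begin{equation*}
 f(\eta/k)\ge k^n\,G^{\alpha_p}_{\eta/k}(l^A,\tilde Q/k)=f(\eta).
\end{equation*}
Combining the two displays, $f(\varepsilon)\ge t^n f(\eta)$ with $t\to1$ as $\varepsilon\to0^+$. Hence $\liminf_{\varepsilon\to0^+}f(\varepsilon)\ge f(\eta)$ for every $\eta\in(0,1]$. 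This gives
\begin{equation*}
 \liminf_{\varepsilon\to0^+} f\ge\sup_{0<\eta\le1}f\ge\limsup_{\varepsilon\to0^+}f,
\end{equation*}
so the limit exists and equals the supremum, which is the first equality of the statement.

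\textbf{Step 2: independence of the orientation.} Write $\tilde Q=\mathcal R Q$ with $\mathcal R\in\mathrm{SO}(n)$; note that $\mathcal R$ need not belong to $\mathcal G$, so the two cubes cannot simply be identified. Fix $\delta>0$ and choose finitely many pairwise disjoint open cubes $Q(x_i;r)$, axis-parallel like $Q$, contained in $\tilde Q$ and with total measure at least $1-\delta$; this is possible since $\tilde Q$ is open. By superadditivity, \eqref{eq:G_lA-trans}, \eqref{eq:G_lA-scaling} and Step 1 applied to $Q$, passing to the liminf in a finite sum gives
\begin{equation*}
 G^{\alpha_p}_-(l^A,\tilde Q)\ge\sum_i G^{\alpha_p}_-(l^A,Q(x_i;r))=\sum_i r^n G^{\alpha_p}(l^A,Q)\ge(1-\delta)G^{\alpha_p}(l^A,Q).
\end{equation*}
Letting $\delta\to0$ yields $G^{\alpha_p}(l^A,\tilde Q)\ge G^{\alpha_p}(l^A,Q)$. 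Exchanging the roles of $Q$ and $\tilde Q$, which is legitimate because Step 1 holds for any unitary cube centered at the origin, gives the reverse inequality.

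The main obstacle is Step 1. The superadditivity argument only compares $\varepsilon$ with $k\varepsilon$ along integer ratios, so one needs the slight shrinking $t\tilde Q\subseteq\tilde Q$ (which uses that the cube is centered at the origin) to reach arbitrary small $\varepsilon$. This is exactly what turns the "integer-ratio monotonicity" into existence of the limit.
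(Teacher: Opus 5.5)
Your proof is correct and takes essentially the same route as the paper, which proves this statement only by referring to the argument of \cite[Prop.~3.2]{FarFusGuaSch20-MR4062330}. You combine the integer-ratio inequality $f(\eta/k)\ge f(\eta)$ with the shrinking $t\tilde Q\subseteq\tilde Q$ and scaling to show that the limit exists and equals the supremum. You then approximate one cube from inside by disjoint small translates of the other; this correctly avoids rotation invariance, which is unavailable since the rotation taking $Q$ to $\tilde Q$ need not lie in $\mathcal{G}$.
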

We can now define the function $ \psi_{\alpha_{p}} : \mathbb{R}^{m\times n} \to \left[0,+\infty\right) $.
\begin{definition}[][def:psi_alphap]
 Let $ \Omega \subseteq \mathbb{R}^{n} $ be an open set, $ m \geq1$ a natural number, $ p \in \left[1,\infty\right) $,
 $ \mathcal{G} $ a subgroup of $ \mathrm{SO}\tonde*{n} $ and $ D \subseteq \mathbb{R}^{n} $ be a bounded open set.
 Let $ \alpha_{ p} = \alpha_{ p , D, \Gamma ^{\mathcal{G}} ,m } $ 
 be a $p$-core functional as in \cref{def:core_function}
 subordinated to $ \Gamma ^{\mathcal{G}} $ as in \cref{def:Gamma_G}.
 We define,
 \begin{equation*}
 \psi_{\alpha_{p}}\tonde*{ A } 
 \coloneqq
 G^{ \alpha_{ p}}\tonde*{ l^{ A} , Q } ,
 \end{equation*}
 for every $ A \in \mathbb{R}^{m\times n} $.
\end{definition}
The next result shows that $ \psi_{\alpha_{p}} $
is well defined, i.e. the values of the function do
not depend on the choice of the cube $ Q $ at the right hand side of \eqref{eq:psi_ind_cubo},
and lists some useful properties.
\begin{proposition}[][res:prop_psi_alphap]
 Under the assumptions of \cref{def:psi_alphap},
 $ \psi_{\alpha_{p}} $ is a well-defined function with values in $ \left[0,+\infty\right) $
 and for any unitary cube $ \tilde{Q} $ centered in the origin
 \begin{equation}\label{eq:psi_ind_cubo}
 \psi_{\alpha_{p}}\tonde*{ A } = G^{ \alpha_{ p}}\tonde*{ l^{ A} , \tilde{Q}} ,
 \end{equation}
 for all $ A \in \mathbb{R}^{m\times n} $.

 Moreover, $ \psi_{\alpha_{p}} $ is $ p $-homogeneous, convex and locally Lipschitz.
 Precisely,
 \begin{equation}\label{eq:loc_lip_fine}
 \abs*{ \psi_{\alpha_{p}}\tonde*{ A } - \psi_{\alpha_{p}}\tonde*{ B }}
 \leq
 \frac{ C_{ \Sigma }}{ \abs*{ D }} \abs*{ A - B }_{2} 
 \text{ }
 \forall A , B \in \Sigma ,
 \end{equation}
 where $ \Sigma $ and $ C_{ \Sigma } >0$ are as in \cref{res:Amat_loc_lip}.

 Furthermore, the set
 \begin{equation*}
 \mathcal{N}_{\psi_{\alpha_{p}}} \coloneqq\set*{ A \in \mathbb{R}^{m\times n} : \psi_{\alpha_{p}}\tonde*{ A } =0},
 \end{equation*}
 is a linear subspace of $ \mathbb{R}^{m\times n} $.

 Finally, $\psi_{\alpha_p}^{1/p}$ is a seminorm and \begin{equation}\label{psi_alphap_tran_null}
 \psi_{\alpha_{p}}\tonde*{ A } = \psi_{\alpha_{p}}\tonde*{ A + B } ,
 \end{equation}
 for every $ A \in \mathbb{R}^{m\times n} $ and $ B \in \mathcal{N}_{\psi_{\alpha_{p}}} $.
\end{proposition}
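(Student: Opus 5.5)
Well-definedness and \eqref{eq:psi_ind_cubo} come directly from \cref{res:well-def_psi}: for every unitary cube $\tilde Q$ centered at the origin the limit $G^{\alpha_p}(l^A,\tilde Q)$ exists and equals $G^{\alpha_p}(l^A,Q)$. It is also finite, since $\nabla l^A = A$ is constant and \cref{res:tip_BMO_bound} gives $\psi_{\alpha_p}(A)\le c_1\abs{A}_2^p$. Nonnegativity is immediate because $\alpha_p\ge 0$.

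For the structural properties I work at fixed $\varepsilon$ and then pass to the limit.

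\emph{$p$-homogeneity.} By \eqref{alpha_p:p-homogeneous}, $\alpha_p(l^{tA},D')=\abs{t}^p\alpha_p(l^A,D')$ for every admissible $D'$. Hence $G^{\alpha_p}_\varepsilon(l^{tA},Q)=\abs{t}^pG^{\alpha_p}_\varepsilon(l^A,Q)$, and letting $\varepsilon\to0^+$ gives $\psi_{\alpha_p}(tA)=\abs{t}^p\psi_{\alpha_p}(A)$.

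\emph{Convexity.} Fix a family $\mathcal G_\varepsilon$ of pairwise disjoint sets and $\lambda\in[0,1]$. By \eqref{alpha_p:convex_su_mat},
\[
\sum_{D'\in\mathcal G_\varepsilon}\alpha_p(l^{\lambda A+(1-\lambda)B},D')\le \lambda\sum_{D'\in\mathcal G_\varepsilon}\alpha_p(l^A,D')+(1-\lambda)\sum_{D'\in\mathcal G_\varepsilon}\alpha_p(l^B,D').
\]
Bounding each sum on the right by its supremum over families and then taking the supremum on the left yields
\[
G_\varepsilon^{\alpha_p}(l^{\lambda A+(1-\lambda)B},Q)\le\lambda G^{\alpha_p}_\varepsilon(l^A,Q)+(1-\lambda)G^{\alpha_p}_\varepsilon(l^B,Q).
\]
All three limits exist, so convexity passes to the limit.

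\emph{Lipschitz estimate \eqref{eq:loc_lip_fine}.} This is the only genuinely quantitative step. Fix $A,B\in\Sigma$ and a family $\mathcal G_\varepsilon$ in $Q$. By \eqref{eq:loclip_keyfinal},
\[
\sum_{D'\in\mathcal G_\varepsilon}\alpha_p(l^A,D')\le\sum_{D'\in\mathcal G_\varepsilon}\alpha_p(l^B,D')+\cardinality[\mathcal G_\varepsilon]\,\varepsilon^pC_\Sigma\abs{A-B}_2 .
\]
Since the sets are disjoint and contained in $Q$, $\cardinality[\mathcal G_\varepsilon]\le \abs{Q}/(\varepsilon^n\abs{D})=\varepsilon^{-n}/\abs{D}$. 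Multiplying by $\varepsilon^{n-p}$ and taking suprema gives
\[
G^{\alpha_p}_\varepsilon(l^A,Q)\le G^{\alpha_p}_\varepsilon(l^B,Q)+\frac{C_\Sigma}{\abs{D}}\abs{A-B}_2 .
\]
Exchanging the roles of $A$ and $B$ and letting $\varepsilon\to0^+$ gives \eqref{eq:loc_lip_fine}. Local Lipschitz continuity on all of $\mathbb{R}^{m\times n}$ then follows from $p$-homogeneity by rescaling balls to $\Sigma$; alternatively, it is automatic for a finite convex function on a finite-dimensional space.

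\emph{Remaining claims.} $\psi_{\alpha_p}$ is a convex, $p$-homogeneous, $[0,+\infty)$-valued function on the real vector space $\mathbb{R}^{m\times n}$. Therefore \cref{res:null_cx_p_hom}, applied with $X=\mathbb{R}^{m\times n}$ and $f=\psi_{\alpha_p}$, shows that $\mathcal N_{\psi_{\alpha_p}}$ is a linear subspace, gives \eqref{psi_alphap_tran_null}, and shows that $\psi_{\alpha_p}^{1/p}$ is a seminorm.

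The only delicate point is the cardinality bound in the Lipschitz step, which is what produces the factor $1/\abs{D}$. Everything else is the routine exchange of pointwise inequalities with suprema and limits.
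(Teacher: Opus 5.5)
Your proposal is correct and follows essentially the same route as the paper. Well-definedness and finiteness come from \cref{res:well-def_psi} and \cref{res:tip_BMO_bound}; homogeneity and convexity are proved at fixed $\varepsilon$ and passed to the limit; the Lipschitz bound comes from \eqref{eq:loclip_keyfinal} together with $\cardinality[\mathcal{G}_\varepsilon]\le(\varepsilon^n\abs{D})^{-1}$; and the remaining claims follow from \cref{res:null_cx_p_hom}. Your family-by-family comparison at fixed $\varepsilon$ is in fact a slightly cleaner version of the paper's Lipschitz step, which goes through $\delta$-approximations of both the limit and the supremum.
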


\begin{proof}
 $ \psi_{\alpha_{p}} $ is well-defined and \eqref{eq:psi_ind_cubo} holds as a direct consequence of \cref{res:well-def_psi},
 while $ \psi_{\alpha_{p}}\tonde*{ A } \in \left[0,+\infty\right) $ for every $ A \in \mathbb{R}^{m\times n} $ by \cref{res:tip_BMO_bound}.
 
We now prove that $ \psi_{\alpha_{p}} $ is $ p $-homogeneous.
We fix $ t \in \mathbb{R} $ and we notice that
\begin{equation*}
 \psi_{\alpha_{p}}\tonde*{ t A } 
 =
 \lim\limits_{\varepsilon\to 0^{+}} G^{\alpha_{p}}_{ \varepsilon }\tonde*{ l^{ t A } , Q } 
 =
 \abs*{ t }^ p \lim\limits_{\varepsilon\to 0^{+}} G^{\alpha_{p}}_{ \varepsilon }\tonde*{ l^{ A} , Q } 
 =
 \abs*{ t }^ p \psi_{\alpha_{p}}\tonde*{ A } ,
\end{equation*}
thanks to \eqref{alpha_p:p-homogeneous}.

We prove that $ \psi_{\alpha_{p}} $ is convex.
Let $ t \in \left[0,1\right] $ and $ A , B \in \mathbb{R}^{m\times n} $.
Let $ \varepsilon >0$ and $ \mathcal{G}_{\varepsilon} $ be a family made of pairwise 
disjoint elements of $ \mathcal{G}^{ D}_{\varepsilon}\tonde*{ Q} $.
Then,
\begin{align*}
 & \varepsilon ^{ n - p } \sum_{ D'\in\mathcal{G}_{\varepsilon}} \alpha_{ p}\tonde*{ t l^{ A} +(1- t ) l^{ B}, D' } 
 \\
 &\leq
 t \varepsilon ^{ n - p } \sum_{ D'\in\mathcal{G}_{\varepsilon}} \alpha_{ p}\tonde*{ l^{ A}, D' } 
 +
 (1- t ) \varepsilon ^{ n - p } \sum_{ D'\in\mathcal{G}_{\varepsilon}} \alpha_{ p}\tonde*{ l^{ B}, D' } 
 \\
 &\leq
 t G^{\alpha_{p}}_{ \varepsilon }\tonde*{ l^{ A} , Q } 
 +
 (1- t ) G^{\alpha_{p}}_{ \varepsilon }\tonde*{ l^{ B} , Q } ,
\end{align*}
where we used \eqref{alpha_p:conv}.
Taking the supremum with respect to $ \mathcal{G}_{\varepsilon} $
we conclude that $ \mathbb{R}^{m\times n} \ni A \mapsto G^{\alpha_{p}}_{ \varepsilon }\tonde*{ l^{ A} , Q } \in \left[0,+\infty\right) $
is convex.
Letting $ \varepsilon \to 0^{+} $ this implies that $ \psi_{\alpha_{p}} $
is convex.

From the convexity of $ \psi_{\alpha_{p}} $, being $ \mathbb{R}^{m\times n} $ a finite dimension space,
we immediately deduce that $ \psi_{\alpha_{p}} $ is locally Lipschitz.

Nevertheless, we now prove \eqref{eq:loc_lip_fine}
to show how the locally Lipschitz
constant of $ \psi_{\alpha_{p}} $ on the set
$ \Sigma \coloneqq\set*{ A \in \mathbb{R}^{m\times n} : \abs*{ A }_{2} \leq1}$
is related to the one of $ \mathbb{R}^{m\times n} \ni A \mapsto \alpha_{ p}\tonde*{ l^{ A}, D } $ on $ \Sigma $.

Let $ A , B \in \Sigma $
and $ \delta >0$.
By definition of $ \psi_{\alpha_{p}} $ there exists $ \varepsilon_{0} >0$ such that,
\begin{equation}\label{eq:Psip_loclip1}
 \psi_{\alpha_{p}}\tonde*{ A } - \psi_{\alpha_{p}}\tonde*{ B } 
 \leq
 G^{\alpha_{p}}_{ \varepsilon }\tonde*{ l^{ A} , Q } - G^{\alpha_{p}}_{ \varepsilon }\tonde*{ l^{ B} , Q } + \delta 
 \text{ }
 \forall
 0< \varepsilon < \varepsilon_{0} .
\end{equation}
Moreover, by definition of
$ G^{\alpha_{p}}_{ \varepsilon }\tonde*{ l^{ A} , Q } $ and $ G^{\alpha_{p}}_{ \varepsilon }\tonde*{ l^{ B} , Q } $,
for every $ \varepsilon >0$
there exists a family $ \mathcal{G}_{\varepsilon} $, made of pairwise disjoint elements
of $ \mathcal{G}^{ D}_{\varepsilon}\tonde*{ Q} $, whose cardinality
does not exceed $\tonde*{ \varepsilon ^ n \abs*{ D }}^{-1}$ and
\begin{equation}\label{eq:Psip_loclip2}
 G^{\alpha_{p}}_{ \varepsilon }\tonde*{ l^{ A} , Q } - G^{\alpha_{p}}_{ \varepsilon }\tonde*{ l^{ B} , Q } 
 \leq
 \varepsilon ^{ n - p } \sup\limits_{\mathcal{G}_{\varepsilon}} \sum_{ D'\in\mathcal{G}_{\varepsilon}}\tonde*{ \alpha_{ p}\tonde*{ l^{ A}, D' } - \alpha_{ p}\tonde*{ l^{ B}, D' }} 
 + \delta .
\end{equation}
Combining \eqref{eq:Psip_loclip1} and \eqref{eq:Psip_loclip2}
with \eqref{eq:loclip_keyfinal}
we find,
\begin{equation*}
 \psi_{\alpha_{p}}\tonde*{ A } - \psi_{\alpha_{p}}\tonde*{ B } 
 \leq
 \frac{ C_{ \Sigma }}{ \abs*{ D }} \abs*{ A - B }_{2} +2 \delta ,
\end{equation*}
where $ C_{ \Sigma } $ is as in \cref{res:Amat_loc_lip}.
By letting $ \delta \to0^+$ and then interchanging the role of
$ A $ and $ B $ we deduce that \eqref{eq:loc_lip_fine} holds.

The conclusion of the proof is an immediate consequence of \cref{res:null_cx_p_hom}.
\end{proof}

\begin{remark}[][res:psi_alphap_Pspace]
 Under the assumptions of \cref{def:psi_alphap},
 as a consequence of \cref{res:prop_psi_alphap},
 for every $ \mathcal{P} $ linear subspace of $ \mathbb{R}^{m\times n} $ such that,
 \begin{equation}\label{eq:dir_sum_dec}
 \mathbb{R}^{m\times n} = \mathcal{P} \oplus \mathcal{N}_{\psi_{\alpha_{p}}} 
 \end{equation}
 it follows that $\psi_{\alpha_p}^{1/p}$ restricted to $\mathcal{P}$ is a norm and for   every $ A \in \mathbb{R}^{m\times n} $,
 \begin{equation*}
 \psi_{\alpha_{p}}\tonde*{ \pi_{\mathcal{P}}\tonde*{ A }} = \psi_{\alpha_{p}}\tonde*{ A } ,
 \end{equation*}
 where $ \pi_{\mathcal{P}} : \mathbb{R}^{m\times n} \to \mathcal{P} $ is
 the projection on $ \mathcal{P} $ (associated to the direct sum decomposition in \eqref{eq:dir_sum_dec}).
 Then, under the assumptions of \cref{thm:Main_BMO},
 \eqref{eq:tesi_Main_BMO} is equivalent to
 \begin{equation}\label{eq:tesi_Main_BMO2}
 \lim\limits_{\varepsilon\to 0^{+}} G^{ \alpha_{ p}}_{ \varepsilon }\tonde*{ u,\Omega } = \int_{ \Omega } \psi_{\alpha_{p}}\tonde*{ \pi_{\mathcal{P}}\tonde*{ \nabla u }} \thinspace\mathrm{d} x .
 \end{equation}
 Moreover, making use of \cref{res:prop_psi_alphap} again, for every $ u \in W^{1,p}_{\mathrm{loc}}\tonde*{\Omega;\mathbb{R}^{m}} $,
 \begin{equation*}
 c \int_{ \Omega } \abs*{ \pi_{\mathcal{P}}\tonde*{ \nabla u }}^{ p }_{2} \thinspace\mathrm{d} x 
 \leq
 \int_{ \Omega } \psi_{\alpha_{p}}\tonde*{ \pi_{\mathcal{P}}\tonde*{ \nabla u }} \thinspace\mathrm{d} x 
 \leq
 C \int_{ \Omega } \abs*{ \pi_{\mathcal{P}}\tonde*{ \nabla u }}^{ p }_{2} \thinspace\mathrm{d} x ,
 \end{equation*}
 for some $ c , C >0$ depending on $ \alpha_{ p} $.
\end{remark}

Now we are ready to prove our main result.

\begin{proof}[Proof of \cref{thm:Main_BMO}]
We divide the proof in several steps. 

\hypertarget{STEP1thm:Main_BMO}{\textbf{Step 1.}}
We first assume that $ \Omega $
is a bounded open set and $\restr{ u }{ \overline{\Omega}}\in C^{1}\tonde*{\overline{\Omega};\mathbb{R}^{m}} $.
For every $ t >0$, we define $ U_{t} \coloneqq\set*{ x \in \Omega : \abs*{ \nabla u\tonde*{ x }}_{2} > t }$.
Let $ \sigma \in \left(0,1\right) $ and $ t >0$ s.t $ \abs*{ \partial U_{t}} =0$ (this happens for every $ t >0$
except for countably many).
Let $ r >0$, $ \graffe*{ Q\tonde*{x_{i};r}}_{1\leq i \leq l } $ and $ W_{t, \sigma } $ as in \cref{res:cover_cubes}.

In order to prove \eqref{eq:tesi_Main_BMO} for $ u $,
we consider two substeps.

\textbf{Step 1.1.} First we prove that 
\begin{equation}\label{Hliminf}
 G^{ \alpha_{ p}}_{ -}\tonde*{ u,\Omega } \geq \int_{ \Omega } \psi_{\alpha_{p}}\tonde*{ \nabla u\tonde*{ x }} \thinspace\mathrm{d} x .
\end{equation}

Fix $ i \in\set*{1,\dots, l }$ and $ \varepsilon >0$ and consider a family $ \mathcal{G}_{\varepsilon} $ as in \cref{def:tipo_BMO}
with $ Q\tonde*{x_{i};r} $ as ambient space.
This means that $ \mathcal{G}_{\varepsilon} $ is a family of pairwise disjoint sets $ \graffe*{ D_{j}}_{1\leq j \leq k } $,
where for every $ j \in\set*{1,\dots, k }$ we have 
$ D_{j} = z_{j} + \varepsilon \mathcal{R}_{j} D \subset Q\tonde*{x_{i};r} $, with
$ \mathcal{R}_{j} \in \mathcal{G} $, $ z_{j} \in \mathbb{R}^{n} $ and
\begin{equation}\label{stima_su_k}
 k \leq \abs*{ Q\tonde*{x_{i};r}} /\tonde*{ \varepsilon ^ n \abs*{ D }}= r ^ n /\tonde*{ \varepsilon ^ n \abs*{ D }}.
\end{equation}

For every $ j \in\set*{1,\dots, k }$ and $ x \in D_{j} $ we may write
\begin{equation*}
 u\tonde*{ x } = u\tonde*{ z_{j}} + \nabla u\tonde*{ z_{j}} \tonde*{ x - z_{j}}+ R_{j}\tonde*{ x } , 
\end{equation*}
with $ R_{j}\tonde*{ x } \coloneqq( \nabla u\tonde*{ \bar{x}} - \nabla u\tonde*{ z_{j}} )( x - z_{j} )$
for all $ x \in \mathbb{R}^{n} $, where $ \bar{x} \in Q\tonde*{x_{i};r} $. Thus,
\begin{equation}\label{Hcubi}
\begin{split}
 G^{ \alpha_{ p}}_{ \varepsilon }\tonde*{ u , Q\tonde*{x_{i};r}} 
&\geq \varepsilon ^{ n - p } \sommatoria[ j =1][ k ]{ \alpha_{ p}\tonde*{ u , D_{j}}} \\
&= \varepsilon ^{ n - p }\sommatoria[ j =1][ k ]{ \alpha_{ p}\tonde*{ \nabla u\tonde*{ z_{j}} \tonde*{\cdot- z_{j}}+ R_{j} , D_{j}}}\\
& \geq \frac{ \varepsilon ^{ n - p }}{(1+ \delta )^ p } \sommatoria[ j =1][ k ]{ \alpha_{ p}\tonde*{ \nabla u\tonde*{ x_{i}} \tonde*{\cdot- z_{j}}, D_{j}}}
-\frac{ \varepsilon ^{ n - p }}{ \delta ^ p }\sommatoria[ j =1][ k ]{ \alpha_{ p}\tonde*{ \bar{R}_{j} , D_{j}}}\\
& = \frac{ \varepsilon ^{ n - p }}{(1+ \delta )^ p } \sommatoria[ j =1][ k ]{ \alpha_{ p}\tonde*{ l^{ \nabla u\tonde*{ x_{i}}}, D_{j}}}
-\frac{ \varepsilon ^{ n - p }}{ \delta ^ p }\sommatoria[ j =1][ k ]{ \alpha_{ p}\tonde*{ \bar{R}_{j} , D_{j}}}
\end{split}
\end{equation}
where in the first equality we use \eqref{alpha_p:outer_trans_inv} and \eqref{alpha_p:ext_ind},
in the last equality we use \eqref{alpha_p:outer_trans_inv},
while in the last inequality we use \eqref{alpha_p:utile_2} having set, 
\begin{align*}
 \bar{R}_{j}\tonde*{ x } &\coloneqq( \nabla u\tonde*{ x_{i}} - \nabla u\tonde*{ z_{j}} )( x - z_{j} )- R_{j}\tonde*{ x } \\
 &=( \nabla u\tonde*{ x_{i}} - \nabla u\tonde*{ \bar{x}} )( x - z_{j} ),
\end{align*}
for every $ x \in \mathbb{R}^{n} $.

Using \eqref{alpha_p:poincare_+} and part \ref{item:cover_cubes_b} in \cref{res:cover_cubes} , we estimate
\begin{equation}\label{stima_resti}
\begin{split}
\frac{1}{ \delta ^ p }
\sommatoria[ j =1][ k ]{ \alpha_{ p}\tonde*{ \bar{R}_{j} , D_{j}}} 
&\leq \frac{ c_{2}}{ \delta ^ p }\sommatoria[ j =1][ k ]{ \operatorname{diam}\tonde*{ D_{j}} ^{ p - n } \int_{ D_{j}} \abs*{ \nabla u\tonde*{ x_{i}} - \nabla u\tonde*{ \bar{x}}}^{ p }_{2} \thinspace\mathrm{d} x }\\
&\leq \frac{ c_{2}}{ \delta ^ p } \varepsilon ^{ p - n }\sommatoria[ j =1][ k ]{ \operatorname{diam}\tonde*{ D } ^{ p - n } \sigma ^{ p } \varepsilon ^ n \abs*{ D }}\\
&\leq \frac{ c_{2}}{ \delta ^ p } \operatorname{diam}\tonde*{ D } ^{ p - n } \varepsilon ^{ p - n } \sigma ^{ p } r ^ n ,
\end{split}
\end{equation}
where in the last inequality we used \eqref{stima_su_k} and $ c_{2} >0$ is as in \eqref{alpha_p:poincare_+}.
Thus combining \eqref{Hcubi} and \eqref{stima_resti},
taking the supremum with respect to all families $ \mathcal{G}_{\varepsilon} $ and the $\liminf$ with respect to $ \varepsilon $, we have 
\begin{equation}\label{etichetta}
 \begin{split}
 G^{ \alpha_{ p}}_{ -}\tonde*{ u , Q\tonde*{x_{i};r}} 
 &\geq
 \frac{1}{(1+ \delta )^ p } G^{ \alpha_{ p}}_{ -}\tonde*{ l^{ \nabla u\tonde*{ x_{i}}} , Q\tonde*{x_{i};r}} 
 -
 \frac{ c_{2}}{ \delta ^ p } \operatorname{diam}\tonde*{ D } ^{ p - n } \sigma ^{ p } r ^ n \\
 &=
 \frac{1}{(1+ \delta )^ p } r ^ n \psi_{\alpha_{p}}\tonde*{ \nabla u\tonde*{ x_{i}}} 
 -
 \frac{ c_{2}}{ \delta ^ p } \operatorname{diam}\tonde*{ D } ^{ p - n } \sigma ^{ p } r ^ n ,
 \end{split}
\end{equation}
where in the last inequality we used \eqref{eq:G_lA-scaling} and \eqref{eq:G_lA-trans}.

Now we observe that,
\begin{equation}\label{spezzo}
 \begin{split}
 r ^ n \psi_{\alpha_{p}}\tonde*{ \nabla u\tonde*{ x_{i}}} 
 &=
 \int_{ Q\tonde*{x_{i};r}} \psi_{\alpha_{p}}\tonde*{ \nabla u\tonde*{ x }} \thinspace\mathrm{d} x 
 \\
 &+
 \int_{ Q\tonde*{x_{i};r}} \psi_{\alpha_{p}}\tonde*{ \nabla u\tonde*{ x_{i}}} - \psi_{\alpha_{p}}\tonde*{ \nabla u\tonde*{ x }} \thinspace\mathrm{d} x 
 \end{split}
\end{equation}
Recalling \cref{res:prop_psi_alphap},
\eqref{eq:differenze_p} and \eqref{eq:diff_normalizzati} we observe that,
\begin{align*}
 & \int_{ Q\tonde*{x_{i};r}} \abs*{ \psi_{\alpha_{p}}\tonde*{ \nabla u\tonde*{ x_{i}}} - \psi_{\alpha_{p}}\tonde*{ \nabla u\tonde*{ x }}} \thinspace\mathrm{d} x 
 \\
 &=
 \int_{ Q\tonde*{x_{i};r}} \abs*{ \abs*{ \nabla u\tonde*{ x_{i}}}^{ p }_{2} \psi_{\alpha_{p}}\tonde*{\frac{ \nabla u\tonde*{ x_{i}}}{ \abs*{ \nabla u\tonde*{ x_{i}}}_{2}}} 
 - \abs*{ \nabla u\tonde*{ x }}^{ p }_{2} \psi_{\alpha_{p}}\tonde*{\frac{ \nabla u\tonde*{ x }}{ \abs*{ \nabla u\tonde*{ x }}_{2}}}} \thinspace\mathrm{d} x 
 \\
 &\leq
 \int_{ Q\tonde*{x_{i};r}} 
 \abs*{ \nabla u\tonde*{ x_{i}}}^{ p }_{2} \abs*{ \psi_{\alpha_{p}}\tonde*{\frac{ \nabla u\tonde*{ x_{i}}}{ \abs*{ \nabla u\tonde*{ x_{i}}}_{2}}} - \psi_{\alpha_{p}}\tonde*{\frac{ \nabla u\tonde*{ x }}{ \abs*{ \nabla u\tonde*{ x }}_{2}}}}
 \thinspace\mathrm{d} x 
 \\
 &+
 \int_{ Q\tonde*{x_{i};r}} 
 \psi_{\alpha_{p}}\tonde*{\frac{ \nabla u\tonde*{ x }}{ \abs*{ \nabla u\tonde*{ x }}_{2}}} \abs*{ \abs*{ \nabla u\tonde*{ x_{i}}}^{ p }_{2} - \abs*{ \nabla u\tonde*{ x }}^{ p }_{2}}
 \thinspace\mathrm{d} x 
 \\
 &\leq
 \frac{ C_{ \Sigma }}{ \abs*{ D }}
 \int_{ Q\tonde*{x_{i};r}} 
 \abs*{ \nabla u\tonde*{ x_{i}}}^{ p }_{2} \abs*{\frac{ \nabla u\tonde*{ x_{i}}}{ \abs*{ \nabla u\tonde*{ x_{i}}}_{2}}-\frac{ \nabla u\tonde*{ x }}{ \abs*{ \nabla u\tonde*{ x }}_{2}}}_{2} 
 \thinspace\mathrm{d} x 
 \\
 &+
 \norm*{ \psi_{\alpha_{p}}}_{L^{\infty}\tonde*{ \Sigma }} 
 \int_{ Q\tonde*{x_{i};r}} 
 \abs*{ \abs*{ \nabla u\tonde*{ x_{i}}}^{ p }_{2} - \abs*{ \nabla u\tonde*{ x }}^{ p }_{2}}
 \thinspace\mathrm{d} x ,
 \\
 &\leq
 2\frac{ C_{ \Sigma }}{ \abs*{ D }}
 \norm*{ \abs*{ \nabla u }_{2}}^{ p -1}_{L^{\infty}\tonde*{\Omega}} 
 \int_{ Q\tonde*{x_{i};r}} 
 \abs*{ \nabla u\tonde*{ x_{i}} - \nabla u\tonde*{ x }}_{2} 
 \thinspace\mathrm{d} x 
 \\
 &+
 p \tonde*{2 \norm*{ \abs*{ \nabla u }_{2}}_{L^{\infty}\tonde*{\Omega}}}^{ p -1}
 \norm*{ \psi_{\alpha_{p}}}_{L^{\infty}\tonde*{ \Sigma }} 
 \int_{ Q\tonde*{x_{i};r}} 
 \abs*{ \nabla u\tonde*{ x_{i}} - \nabla u\tonde*{ x }}_{2} 
 \thinspace\mathrm{d} x ,
 \\
 &\leq
 C \sigma r ^ n 
\end{align*}
where $ \Sigma \coloneqq\set*{ A \in \mathbb{R}^{m\times n} : \abs*{ A }_{2} \leq1}$, $ C_{ \Sigma } >0$
is as in the proof of \cref{res:prop_psi_alphap} and
$ C >0$ is a constant depending on $ \alpha_{ p} $ and $ \nabla u $.

Recalling \eqref{spezzo} and \eqref{etichetta}, we then get
\begin{equation*}
 G^{ \alpha_{ p}}_{ -}\tonde*{ u , Q\tonde*{x_{i};r}} 
 \geq
 \frac{1}{(1+ \delta )^ p } \int_{ Q\tonde*{x_{i};r}} \psi_{\alpha_{p}}\tonde*{ \nabla u\tonde*{ x }} \thinspace\mathrm{d} x 
 -
 C \sigma r ^ n ,
\end{equation*}
where $ C >0$ is a constant depending on $ \alpha_{ p} $, $ \nabla u $ and $ \delta $.

Thus, summing up with respect to $ i $, by the monotonicity and the superadditivity
of $ G^{ \alpha_{ p}}_{ -}\tonde*{ u ,\cdot } $ (recall \cref{res:prop_funz_insieme}),
we get
\begin{equation*}
 \begin{split}
 G^{ \alpha_{ p}}_{ -}\tonde*{ u , \Omega } 
 \geq
 \sommatoria[ i =1][ l ]{ G^{ \alpha_{ p}}_{ -}\tonde*{ u , Q\tonde*{x_{i};r}}}
 & \geq 
 \frac{1}{(1+ \delta )^ p } 
 \sommatoria[ i =1][ l ]{ \int_{ Q\tonde*{x_{i};r}} \psi_{\alpha_{p}}\tonde*{ \nabla u\tonde*{ x }} \thinspace\mathrm{d} x - C \sigma \abs*{ \Omega }}
 \\& \geq 
 \frac{1}{(1+ \delta )^ p }
 \int_{ U_{t}} \psi_{\alpha_{p}}\tonde*{ \nabla u\tonde*{ x }} \thinspace\mathrm{d} x - C \sigma ,
 \end{split}
\end{equation*}
where the last inequality follows from part \ref{item:cover_cubes_a} in
\cref{res:cover_cubes} and the constant $ C $ depends on 
$ \alpha_{ p} $, $ \nabla u $, $ \abs*{ \Omega } $ and $ \delta $. Then \eqref{Hliminf} follows
at once letting first $ \sigma \to0$, $ t \to0$ and then $ \delta \to 0$ in the previous inequality.

\textbf{Step 1.2.} We prove now,
\begin{equation}\label{Hlimsup}
 G^{ \alpha_{ p}}_{ +}\tonde*{ u,\Omega } \leq \int_{ \Omega } \psi_{\alpha_{p}}\tonde*{ \nabla u\tonde*{ x }} \thinspace\mathrm{d} x .
\end{equation}

Using the subadditivity of $ G^{ \alpha_{ p}}_{ +}\tonde*{ u ,\cdot } $ (recall \cref{res:prop_funz_insieme}) we have
\begin{equation}\label{a1}
 G^{ \alpha_{ p}}_{ +}\tonde*{ u , \Omega } 
 \leq
 \sommatoria[ i =1][ l ]{ G^{ \alpha_{ p}}_{ +}\tonde*{ u , Q\tonde*{x_{i};r}}}
 +
 G^{ \alpha_{ p}}_{ +}\tonde*{ u , \Omega \setminus \overline{ U_{t}}} 
 +
 G^{ \alpha_{ p}}_{ +}\tonde*{ u , W_{t, \sigma }} .
\end{equation}
Let us estimate the three terms in the last inequality.
Arguing as in the previous step, for every $i=1,\dots, l $ we get that, 
\begin{equation}\label{a2}
 G^{ \alpha_{ p}}_{ +}\tonde*{ u , Q\tonde*{x_{i};r}} 
 \leq
 \tonde*{1+ \delta }^ p \int_{ Q\tonde*{x_{i};r}} \psi_{\alpha_{p}}\tonde*{ \nabla u\tonde*{ x }} \thinspace\mathrm{d} x 
 +
 C \sigma r ^ n ,
\end{equation}
for some positive constant $ C $ depending only on $ \alpha_{ p} $, $ \nabla u $ and and $ \delta $. 

By using \cref{res:tip_BMO_bound} we get
\begin{equation}\label{a3}
 \begin{split}
 & G^{ \alpha_{ p}}_{ +}\tonde*{ u , \Omega \setminus \overline{ U_{t}}} 
 +
 G^{ \alpha_{ p}}_{ +}\tonde*{ u , W_{t, \sigma }} 
 \\
 &\leq
 c_{1} \int_{\set*{ \abs*{ \nabla u }_{2} \leq t }} \abs*{ \nabla u }^{ p }_{2} \thinspace\mathrm{d} x 
 +
 c_{1} \int_{ W_{t, \sigma }} \abs*{ \nabla u }^{ p }_{2} \thinspace\mathrm{d} x 
 \leq
 c_{1} \tonde*{ \abs*{ \Omega } t ^ p + \sigma \norm*{ \nabla u }^{ p }_{L^{\infty}\tonde*{\Omega}}}
 \end{split}
\end{equation}
Then \eqref{Hlimsup} follows combining \eqref{a1}, \eqref{a2}, \eqref{a3} and letting first $ \sigma \to0$, $ t \to0$ and then $ \delta \to0$.

Finally, \eqref{Hliminf} and \eqref{Hlimsup} imply that \eqref{eq:tesi_Main_BMO} holds under
the assumptions on $ u $ made in this first step.

\hypertarget{STEP2thm:Main_BMO}{\textbf{Step 2.}}
We assume that $ \Omega $
is a bounded open set and
that $ u \in L^{p}_{\mathrm{loc}}\tonde*{\mathbb{R}^{n};\mathbb{R}^{m}} $ with $ \nabla \restr{ u }{ \Omega } \in L^{p}\tonde*{\Omega;\mathbb{R}^{m\times n}} $.

Thanks to the previous step we know that,
\begin{equation}\label{a48d0}
 \lim\limits_{\varepsilon\to 0^{+}} G^{ \alpha_{ p}}_{ \varepsilon }\tonde*{ v , \Omega' } 
 =
 \int_{ \Omega' } \psi_{\alpha_{p}}\tonde*{ \nabla v } \thinspace\mathrm{d} x ,
\end{equation}
holds for every $ \Omega' \in \mathcal{A}_{\Omega} $ and for every 
$ v \in L^{p}_{\mathrm{loc}}\tonde*{\mathbb{R}^{n};\mathbb{R}^{m}} $ s.t. $\restr{ v }{ \overline{\Omega'}}\in C^{1}\tonde*{\overline{\Omega'};\mathbb{R}^{m}} $.

Let $ \Omega' \in \mathcal{A}_{\Omega} $ s.t $ \Omega' \Subset \Omega $ and $ \sigma >0$.
Then, there exists $ v^{\Omega'}_{ \sigma } \in C^{\infty}_{c}\tonde*{\mathbb{R}^{n};\mathbb{R}^{m}} $ s.t. 
$ \norm*{ u - v^{\Omega'}_{ \sigma }}_{W^{1,p}\tonde*{\Omega';\mathbb{R}^{m}}} < \sigma $.

Fix $ \varepsilon >0$ and consider a family $ \mathcal{G}_{\varepsilon} $ of pairwise disjoint elements of $ \mathcal{G}^{ D}_{\varepsilon}\tonde*{\Omega'} $.
Then,
\begin{equation}\label{nome}
 \begin{split}
 & G^{ \alpha_{ p}}_{ \varepsilon }\tonde*{ u , \Omega' } 
 \geq
 \varepsilon ^{ n - p } \sum_{ D'\in\mathcal{G}_{\varepsilon}} \alpha_{ p}\tonde*{ u , D' } 
 \\
 &\geq
 \frac{1}{\tonde*{1+ \delta }^ p } \varepsilon ^{ n - p } \sum_{ D'\in\mathcal{G}_{\varepsilon}} \alpha_{ p}\tonde*{ v^{\Omega'}_{ \sigma }, D' } 
 -
 \frac{1}{ \delta ^ p } \varepsilon ^{ n - p } \sum_{ D'\in\mathcal{G}_{\varepsilon}} \alpha_{ p}\tonde*{ u - v^{\Omega'}_{ \sigma }, D' } ,
 \end{split}
\end{equation}
where we used \eqref{alpha_p:utile_2}.

Using \eqref{alpha_p:poincare_+}, we estimate,
\begin{equation}\label{stima_resti2}
\begin{split}
\frac{1}{ \delta ^ p }
 \sum_{ D'\in\mathcal{G}_{\varepsilon}} \alpha_{ p}\tonde*{ u - v^{\Omega'}_{ \sigma }, D' } 
&\leq \frac{ c_{2}}{ \delta ^ p } \sum_{ D'\in\mathcal{G}_{\varepsilon}} \operatorname{diam}\tonde*{ D' } ^{ p - n } \int_{ D' } \abs*{ \nabla u - \nabla v^{\Omega'}_{ \sigma }}^{ p }_{2} \thinspace\mathrm{d} x \\
&\leq \frac{ c_{2}}{ \delta ^ p } \operatorname{diam}\tonde*{ D } ^{ p - n } \varepsilon ^{ p - n } \int_{ \Omega' } \abs*{ \nabla u - \nabla v^{\Omega'}_{ \sigma }}^{ p }_{2} \thinspace\mathrm{d} x \\
&\leq \frac{ c_{1}}{ \delta ^ p } \varepsilon ^{ p - n } \sigma ^ p ,
\end{split}
\end{equation}
where in the last inequalities $ c_{2} >0$ is as in \eqref{alpha_p:poincare_+}
and $ c_{1} $ is as in \cref{alpha_p:poincare_type_bound}.
Combining \eqref{nome} with \eqref{stima_resti2}, taking the supremum with respect to $ \mathcal{G}_{\varepsilon} $
and letting $ \varepsilon \to 0^{+} $, we obtain,
\begin{equation}\label{b1}
 \begin{split}
 & G^{ \alpha_{ p}}_{ -}\tonde*{ u,\Omega } 
 \geq
 G^{ \alpha_{ p}}_{ -}\tonde*{ u , \Omega' } 
 \geq
 \frac{1}{\tonde*{1+ \delta }^ p } G^{ \alpha_{ p}}_{ -}\tonde*{ v^{\Omega'}_{ \sigma } , \Omega' } -\frac{ C \sigma ^ p }{ \delta ^ p }
 \\
 &=
 \frac{1}{\tonde*{1+ \delta }^ p } \int_{ \Omega' } \psi_{\alpha_{p}}\tonde*{ \nabla v^{\Omega'}_{ \sigma }\tonde*{ x }} \thinspace\mathrm{d} x -\frac{ C \sigma ^ p }{ \delta ^ p },
 \end{split}
\end{equation}
where in the first inequality we used \cref{res:prop_funz_insieme},
$ C >0$ is a constant depending on $ \alpha_{ p} $
and in the last equality we made use of \eqref{a48d0}.

Similarly it holds,
\begin{equation}\label{b2}
 G^{ \alpha_{ p}}_{ +}\tonde*{ u , \Omega' } 
 \leq
 \tonde*{1+ \delta }^ p \int_{ \Omega' } \psi_{\alpha_{p}}\tonde*{ \nabla v^{\Omega'}_{ \sigma }\tonde*{ x }} \thinspace\mathrm{d} x +\frac{ C \tonde*{1+ \delta }^ p \sigma ^ p }{ \delta ^ p }.
\end{equation}

Thanks to the dominated convergence theorem,
\begin{equation}\label{dominata}
 \lim\limits_{ \sigma \to 0^{+}} \int_{ \Omega' } \psi_{\alpha_{p}}\tonde*{ \nabla v^{\Omega'}_{ \sigma }\tonde*{ x }} \thinspace\mathrm{d} x 
 = \int_{ \Omega' } \psi_{\alpha_{p}}\tonde*{ \nabla u\tonde*{ x }} \thinspace\mathrm{d} x ,
\end{equation}
since $ \lim\limits_{ \sigma \to 0^{+}} \norm*{ u - v^{\Omega'}_{ \sigma }}_{W^{1,p}\tonde*{\Omega';\mathbb{R}^{m}}} =0$, $ \psi_{\alpha_{p}} $ is continuous and
$ \psi_{\alpha_{p}}\tonde*{ A } \leq C \abs*{ A }^{ p }_{2} $ (cfr. \cref{res:prop_psi_alphap})
where $ C >0$ is a constant depending on $ \alpha_{ p} $.
Combining \eqref{b1}, \eqref{b2}, \eqref{dominata}
and letting first $ \sigma \to 0^{+} $, $ \delta \to 0^{+} $
and $ \Omega' \uparrow \Omega $ (recalling also \eqref{eq:sup_gpiu} in \cref{res:prop_funz_insieme})
we deduce that \eqref{eq:tesi_Main_BMO} holds for $ u $.

\hypertarget{STEP3thm:Main_BMO}{\textbf{Step 3.}}
We assume that $ u \in L^{p}_{\mathrm{loc}}\tonde*{\mathbb{R}^{n};\mathbb{R}^{m}} $ with $ \nabla \restr{ u }{ \Omega } \in L^{p}\tonde*{\Omega;\mathbb{R}^{m\times n}} $ and $ \Omega \subseteq \mathbb{R}^{n} $ is an arbitrary nonempty open set.
Let $ \Omega' \in \mathcal{A}_{\Omega} $ s.t. $ \Omega' \Subset \Omega $. By the previous step and \cref{res:prop_funz_insieme} we find,
\begin{equation}\label{asdf}
 G^{ \alpha_{ p}}_{ +}\tonde*{ u , \Omega' } 
 =
 \int_{ \Omega' } \psi_{\alpha_{p}}\tonde*{ \nabla u\tonde*{ x }} \thinspace\mathrm{d} x 
 =
 G^{ \alpha_{ p}}_{ -}\tonde*{ u , \Omega' } \leq G^{ \alpha_{ p}}_{ -}\tonde*{ u,\Omega } .
\end{equation}
\eqref{eq:tesi_Main_BMO} follows by letting $ \Omega' \uparrow \Omega $ in \eqref{asdf}
and using \eqref{eq:sup_gpiu} in \cref{res:prop_funz_insieme}.

\hypertarget{STEP4thm:Main_BMO}{\textbf{Step 4.}}
We assume that $ \psi_{\alpha_{p}} \not\equiv0$
and $ u \in L^{p}_{\mathrm{loc}}\tonde*{\mathbb{R}^{n};\mathbb{R}^{m}} $ with $\restr{ u }{ \Omega }\in W^{1,p}_{\mathrm{loc}}\tonde*{\Omega;\mathbb{R}^{m}} $
and $ \Omega \subseteq \mathbb{R}^{n} $ is an arbitrary nonempty open set.

We may also assume that $ \nabla \restr{ u }{ \Omega } \not\in L^{p}\tonde*{\Omega;\mathbb{R}^{m\times n}} $,
otherwise we can apply the previous step.
Let $ \mathcal{P} $ be a linear subspace of $ \mathbb{R}^{m\times n} $ as in \eqref{eq:dir_sum_dec}
 and $ \pi_{\mathcal{P}} : \mathbb{R}^{m\times n} \to \mathcal{P} $ as in \cref{res:psi_alphap_Pspace}.
 Since $ \psi_{\alpha_{p}} \not\equiv0$, $ \mathcal{P} $ is a non-trivial linear subspace of $ \mathbb{R}^{m\times n} $.
Then, by \cref{res:prop_psi_alphap} and \cref{res:psi_alphap_Pspace},
\begin{equation}\label{89234h}
 \begin{split}
 & \int_{ \Omega } \psi_{\alpha_{p}}\tonde*{ \nabla u } \thinspace\mathrm{d} x 
 \\
 &=
 \int_{ \Omega \setminus\set*{ \abs*{ \nabla u }_{2} =0}} 
 \abs*{ \nabla u }^{ p }_{2} \psi_{\alpha_{p}}\tonde*{ \pi_{\mathcal{P}}\tonde*{\frac{ \nabla u }{ \abs*{ \nabla u }_{2}}}} 
 \thinspace\mathrm{d} x 
 \geq
 C \int_{ \Omega } 
 \abs*{ \nabla u }^{ p }_{2} 
 \thinspace\mathrm{d} x 
 =+\infty,
 \end{split}
\end{equation}
where,
\begin{equation*}
 C \coloneqq\min_{ A \in \mathcal{P} , \abs*{ A }_{2} =1} \psi_{\alpha_{p}}\tonde*{ \pi_{\mathcal{P}}\tonde*{ A }} >0,
\end{equation*}
is a positive constant. 

Let $ \Omega' \in \mathcal{A}_{\Omega} $ s.t. $ \Omega' \Subset \Omega $.
By the previous step and the monotonicity of $ G^{ \alpha_{ p}}_{\pm}\tonde*{ u ,\cdot } $
(cfr. \cref{res:prop_funz_insieme}) we find
that \eqref{asdf} holds.

\eqref{eq:tesi_Main_BMO} follows by letting $ \Omega' \uparrow \Omega $ in \eqref{asdf}
and recalling \eqref{89234h}.
This concludes the proof.
\end{proof}

We now introduce a stronger version of the concept of $p$-core functional seen in \cref{def:core_function}.
This stronger definition will allow us to extend \cref{thm:Main_BMO}
to functions less regular than Sobolev ones.

\begin{definition}[strong $p$-core functional][def:strong_core_function]
Let $ \alpha_{ p} $ be a $p$-core functional as in \cref{def:core_function}.
Recalling \cref{def:psi_alphap} and \cref{res:prop_psi_alphap},
we say that $ \alpha_{ p} $ is a strong $p$-core functional if 
there exists $ \mathcal{P} $, a linear subspace of $ \mathbb{R}^{m\times n} $, such that 
\cref{eq:dir_sum_dec} holds and,
\begin{equation}\label{alpha_p:poin_typ_bdd_Pspace}
 \tag{$\text{GB}_{ \mathcal{P}}$}
 \begin{split}
 &\exists c_{3} >0
 \text{ s.t. }
 \alpha_{ p}\tonde*{ u , D } 
 \leq
 c_{3} \int_{ D } \abs*{ D_{ \pi_{\mathcal{P}}} u }^{ p }_{2} \thinspace\mathrm{d} x 
 \\
 &\forall u \in L^{p}_{\mathrm{loc}}\tonde*{\mathbb{R}^{n};\mathbb{R}^{m}} \text{ s.t. }\restr{ u }{ D }\in W^{\pi_{\mathcal{P}},p}\tonde*{D;\mathbb{R}^{m}} .
 \end{split}
 \end{equation}
\end{definition}

We observe that \cref{alpha_p:poin_typ_bdd_Pspace} implies \cref{alpha_p:poincare_type_bound}.
Indeed, if $ v=\restr{ u }{ D } \in W^{1,p}\tonde*{D;\mathbb{R}^{m}} $, then,
\begin{equation*}
 \abs*{ D_{ \pi_{\mathcal{P}}} v }_{2} 
 =
 \abs*{ \pi_{\mathcal{P}}\tonde*{ \nabla v }}_{2} \leq\abs*{ \pi_{\mathcal{P}}} \abs*{ \nabla v }_{2} ,
\end{equation*}
where $\abs*{ \pi_{\mathcal{P}}}\coloneqq \sup\limits_{ } \set*{ \abs*{ \pi_{\mathcal{P}}\tonde*{ A }}_{2} : A \in \mathbb{R}^{m\times n} , \abs*{ A }_{2} \leq1}$.
Therefore, \cref{alpha_p:poin_typ_bdd_Pspace} implies that \cref{alpha_p:poincare_type_bound} holds with $ c_{1} =\abs*{ \pi_{\mathcal{P}}}^{ p } c_{3} $ if $\abs*{ \pi_{\mathcal{P}}}>0$. If $\abs*{ \pi_{\mathcal{P}}}=0$ \cref{alpha_p:poincare_type_bound} holds for any $c_1>0$ (cfr. \cref{res:obs_strong_p_core}).

\begin{remark}[][res:obs_strong_p_core]
 Every $p$-core functional such that $ \mathcal{N}_{\psi_{\alpha_{p}}} =\set*{0}$ (cfr. \cref{res:prop_psi_alphap}) is 
 a strong $p$-core functional, since this condition forces $ \mathcal{P} $ to be equal to $ \mathbb{R}^{m\times n} $
 and then $ D_{ \pi_{\mathcal{P}}} = \nabla $.

 We also highlight that if a strong $p$-core functional is such that $ \mathcal{N}_{\psi_{\alpha_{p}}} = \mathbb{R}^{m\times n} $,
 then $ \mathcal{P} =\set*{0}$. Therefore, $ D_{ \pi_{\mathcal{P}}} =0$
 and from \cref{alpha_p:poin_typ_bdd_Pspace} and \cref{alpha_p:change_of_variable}
 we deduce that $ \alpha_{ p} $ is the trivial $p$-core functional (cfr. \cref{ex:trivial}).
\end{remark}

Some consequences of \eqref{alpha_p:poincare_type_bound} are reinforced with \eqref{alpha_p:poin_typ_bdd_Pspace}. 
If $ \mathcal{G} =\set*{ \operatorname{id}_{\mathbb{R}^{n}}}$ and 
 $ \alpha_{ p} = \alpha_{ p , D, \Gamma ^{\mathcal{G}} ,m } $ 
 is a strong $p$-core functional as in \cref{def:strong_core_function}
 subordinated to $ \Gamma ^{\mathcal{G}} $ as in \cref{def:Gamma_G}, then, \eqref{alpha_p:change_of_variable}, \eqref{alpha_p:ext_ind} and \eqref{alpha_p:poin_typ_bdd_Pspace}
 imply,
 \begin{equation}\label{alpha_p:poincare_+_Pspace}
 \tag{$\text{GB}_{ \mathcal{P}}$+}
 \begin{split}
 & \alpha_{ p}\tonde*{ u , D' } 
 \leq
 c_{4} \operatorname{diam}\tonde*{ D' } ^{ p - n } \int_{ D' } \abs*{ D_{ \pi_{\mathcal{P}}}\tonde*{\restr{ u }{ D' }}}^{ p }_{2} \thinspace\mathrm{d} x 
 \text{ }
 \\
 &\forall D' \in \mathcal{G}^{ D}\tonde*{\mathbb{R}^{n}} \forall u \in L^{p}_{\mathrm{loc}}\tonde*{\mathbb{R}^{n};\mathbb{R}^{m}} \text{ s.t. } D_{ \pi_{\mathcal{P}}}\tonde*{\restr{ u }{ D' }} \in L^{p}\tonde*{D';\mathbb{R}^{m\times n}} ,
 \end{split}
 \end{equation}
 where $ c_{4} = c_{3} \operatorname{diam}\tonde*{ D } ^{ n - p }$.

 If $ u \in L^{p}_{\mathrm{loc}}\tonde*{\mathbb{R}^{n};\mathbb{R}^{m}} $ and $ D_{ \pi_{\mathcal{P}}}\tonde*{\restr{ u }{ \Omega }} \in L^{p}\tonde*{\Omega;\mathbb{R}^{m\times n}} $,
 thanks to \eqref{alpha_p:poincare_+_Pspace},
 we obtain that
 \begin{equation*}
 \begin{split}
 G^{\alpha_{p}}_{ \varepsilon }\tonde*{ u,\Omega } 
 &\leq
 \varepsilon ^{ n - p } \sup\limits_{\mathcal{G}_{\varepsilon}} \sum_{ D'\in\mathcal{G}_{\varepsilon}} c_{4} \operatorname{diam}\tonde*{ D' } ^{ p - n } \int_{ D' } \abs*{ D_{ \pi_{\mathcal{P}}}\tonde*{\restr{ u }{ D' }}}^{ p }_{2} \thinspace\mathrm{d} x 
 \\
 &\leq
 c_{3} \int_{ \Omega } \abs*{ D_{ \pi_{\mathcal{P}}}\tonde*{\restr{ u }{ \Omega }}}^{ p }_{2} \thinspace\mathrm{d} x ,
 \end{split}
 \end{equation*}
 for every $ \varepsilon >0$.
 Therefore,
 \begin{equation}\label{eq:utile}
 G^{ \alpha_{ p}}_{\pm}\tonde*{ u,\Omega } \leq c_{3} \int_{ \Omega } \abs*{ D_{ \pi_{\mathcal{P}}}\tonde*{\restr{ u }{ \Omega }}}^{ p }_{2} \thinspace\mathrm{d} x .
 \end{equation}

 \begin{remark}[][res:tip_BMO_bound_Pspace]
 Under the above assumptions, $ g_{+} $ is inner-regular and $ \sigma $-subadditive: that is, 
 \ref{it:supremum} and \ref{it:subadditive} in \cref{res:prop_funz_insieme} hold
 (requiring only the milder assumption that $ D_{ \pi_{\mathcal{P}}}\tonde*{\restr{ u }{ \Omega }}$ belongs to $ L^{p}\tonde*{\Omega;\mathbb{R}^{m\times n}} $
 instead of the $ p $-integrability of the whole gradient).
Indeed, to prove that $ g_{+} $ is inner-regular and $ \sigma $-subadditive we refer again
 to the proof of \cite[Prop. 3.1]{FarFusGuaSch20-MR4062330}
 taking into account \eqref{eq:utile}.
\end{remark}

We now prove the extension of \cref{thm:Main_BMO}.
\begin{proof}[Proof of \cref{thm:Main_BMO_WL}]
 Let $ u \in L^{p}_{\mathrm{loc}}\tonde*{\mathbb{R}^{n};\mathbb{R}^{m}} $ such that $ D_{ \pi_{\mathcal{P}}}\tonde*{\restr{ u }{ \Omega }} \in L^{p}\tonde*{\Omega;\mathbb{R}^{m\times n}} $.
  Thanks to \cref{thm:Main_BMO} and \cref{res:psi_alphap_Pspace} we know that,
 \begin{equation}\label{a48d0_bis}
 \lim\limits_{\varepsilon\to 0^{+}} G^{ \alpha_{ p}}_{ \varepsilon }\tonde*{ v , \Omega' } 
 =
 \int_{ \Omega' } \psi_{\alpha_{p}}\tonde*{ D_{ \pi_{\mathcal{P}}}\tonde*{\restr{ v }{ \Omega' }}} \thinspace\mathrm{d} x ,
 \end{equation}
 holds for every $ \Omega' \in \mathcal{A}_{\Omega} $ and
 for every $ v \in L^{p}_{\mathrm{loc}}\tonde*{\mathbb{R}^{n};\mathbb{R}^{m}} $ s.t. $ \nabla \tonde*{\restr{ v }{ \Omega' }} \in L^{p}\tonde*{\Omega';\mathbb{R}^{m\times n}} $.

 Let $ \Omega' \in \mathcal{A}_{\Omega} $ s.t $ \Omega' \Subset \Omega $ and $ \sigma >0$.
 Then, thanks to \cref{res:DL_density}, there exists $ v^{\Omega'}_{ \sigma } \in C^{\infty}_{c}\tonde*{\mathbb{R}^{n};\mathbb{R}^{m}} $ s.t. 
 $ \norm*{ u - v^{\Omega'}_{ \sigma }}_{W^{\pi_{\mathcal{P}},p}\tonde*{\Omega';\mathbb{R}^{m}}} < \sigma $.

 Now we argue as in \hyperlink{STEP2thm:Main_BMO}{\textbf{Step 2}} in the proof of \cref{thm:Main_BMO},
 making use of \cref{a48d0_bis}, \cref{alpha_p:poincare_+_Pspace} (instead of \cref{alpha_p:poincare_+})
 and \cref{res:prop_funz_insieme} (reinforced with \cref{res:tip_BMO_bound_Pspace})
 to obtain
 \cref{eq:tesi_Main_BMO_WL}.

 Finally, we assume that $\restr{ u }{ \Omega }\in W^{\pi_{\mathcal{P}},p}_{\mathrm{loc}}\tonde*{\Omega;\mathbb{R}^{m}} $ and $ \psi_{\alpha_{p}} \not\equiv0$.
 Arguing as in \hyperlink{STEP4thm:Main_BMO}{\textbf{Step 4}} in the proof of \cref{thm:Main_BMO}
 we deduce that,
 \begin{equation*}\begin{split}
 & \int_{ \Omega } \psi_{\alpha_{p}}\tonde*{ D_{ \pi_{\mathcal{P}}}\tonde*{\restr{ u }{ \Omega }}} \thinspace\mathrm{d} x 
 \\
 &=
 \int_{ \Omega \setminus\set*{ \abs*{ D_{ \pi_{\mathcal{P}}}\tonde*{\restr{ u }{ \Omega }}}_{2} =0}} 
 \abs*{ D_{ \pi_{\mathcal{P}}}\tonde*{\restr{ u }{ \Omega }}}^{ p }_{2} \psi_{\alpha_{p}}\tonde*{ \pi_{\mathcal{P}}\tonde*{\frac{ D_{ \pi_{\mathcal{P}}}\tonde*{\restr{ u }{ \Omega }}}{ \abs*{ D_{ \pi_{\mathcal{P}}}\tonde*{\restr{ u }{ \Omega }}}_{2}}}} 
 \thinspace\mathrm{d} x 
 \\
 &\geq
 C \int_{ \Omega } 
 \abs*{ D_{ \pi_{\mathcal{P}}}\tonde*{\restr{ u }{ \Omega }}}^{ p }_{2} 
 \thinspace\mathrm{d} x ,
 \end{split}
 \end{equation*}
 where,
 \begin{equation*}
 C \coloneqq\min_{ A \in \mathcal{P} , \abs*{ A }_{2} =1} \psi_{\alpha_{p}}\tonde*{ \pi_{\mathcal{P}}\tonde*{ A }} >0,
 \end{equation*}
 is a positive constant. At this point,
 the same argument in \hyperlink{STEP4thm:Main_BMO}{\textbf{Step 4}} in the proof of \cref{thm:Main_BMO} 
 allows us to conclude that \eqref{eq:tesi_Main_BMO_WL} holds.
\end{proof} \section{Non-distributional characterization of Sobolev-type spaces}\label{sec:characterizations}

As a consequence of \cref{thm:Main_BMO} we obtain the following.

\begin{corollary}[][res:cor_Main_BMO]
 Let $ \Omega \subseteq \mathbb{R}^{n} $ be an open set, $ m \geq1$ a natural number, $ p \in \left[1,\infty\right) $,
 $ \mathcal{G} $ a subgroup of $ \mathrm{SO}\tonde*{n} $ and $ D \subseteq \mathbb{R}^{n} $ be a bounded open set.
 Let $ \alpha_{ p} = \alpha_{ p , D, \Gamma ^{\mathcal{G}} ,m } $ 
 be a $p$-core functional as in \cref{def:core_function}
 subordinated to $ \Gamma ^{\mathcal{G}} $ as in \cref{def:Gamma_G}.
 Assume also that
 for every $ D' \in \mathcal{G}^{ D}\tonde*{\mathbb{R}^{n}} $ the functional
 $\restr{ \alpha_{ p}\tonde*{\cdot, D' }}{ L^{p}\tonde*{\mathbb{R}^{n};\mathbb{R}^{m}}}: L^{p}\tonde*{\mathbb{R}^{n};\mathbb{R}^{m}} \to \left[0,+\infty\right) $
 is lower semicontinuous.

 Let $ \mathcal{P} $ be a linear subspace of $ \mathbb{R}^{m\times n} $ as in \eqref{eq:dir_sum_dec}
 and $ \pi_{\mathcal{P}} : \mathbb{R}^{m\times n} \to \mathcal{P} $ as in \cref{res:psi_alphap_Pspace}.
 Let $ \graffe*{ \rho _{ \sigma }}_{ \sigma >0} \subset C^{\infty}_{c}\tonde*{\mathbb{R}^{n}} $ be a family of standard mollifiers
 and $ \tilde{\Omega} \in \mathcal{A}_{\Omega} $ s.t. $ \tilde{\Omega} \Subset \Omega $.
 For every $ u \in L^{p}_{\mathrm{loc}}\tonde*{\mathbb{R}^{n};\mathbb{R}^{m}} $ and
 for every $0< \sigma < \operatorname{dist}\tonde*{ \tilde{\Omega} , \partial \Omega } $,
 \begin{equation}\label{tesi:cor}
 C \int_{ \tilde{\Omega}} \abs*{ \pi_{\mathcal{P}}\tonde*{ \nabla u_{ \sigma }}}^{ p }_{2} \thinspace\mathrm{d} x 
 \leq
 \int_{ \tilde{\Omega}} \psi_{\alpha_{p}}\tonde*{ \nabla u_{ \sigma }} \thinspace\mathrm{d} x 
 =
 G^{ \alpha_{ p}}\tonde*{ u_{ \sigma } , \tilde{\Omega}} \leq G^{ \alpha_{ p}}_{ -}\tonde*{ u,\Omega } ,
 \end{equation}
 where $ u_{ \sigma } \coloneqq \rho _{ \sigma } \ast u $ for every $ \sigma >0$
 and $ C >0$ is a constant depending on $ \psi_{\alpha_{p}} $.

\end{corollary}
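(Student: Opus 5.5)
The chain \eqref{tesi:cor} has three links, and I will treat them from left to right; the real work is in the last one.

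\emph{First inequality.} This is pointwise. Write $\psi_{\alpha_{p}}(A)=\psi_{\alpha_{p}}(\pi_{\mathcal{P}}(A))$, which holds by \cref{res:psi_alphap_Pspace}. Since $\psi_{\alpha_p}^{1/p}$ restricted to $\mathcal{P}$ is a norm, $p$-homogeneity and compactness of the unit sphere of $\mathcal{P}$ give
\[
\psi_{\alpha_{p}}(B)\geq C\abs*{B}_2^p \text{ for all } B\in\mathcal{P},
\]
with $C\coloneqq\min\{\psi_{\alpha_p}(B): B\in\mathcal{P},\abs*{B}_2=1\}$. If $\mathcal{P}=\{0\}$ the inequality is trivial.

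\emph{Middle equality.} This is \cref{thm:Main_BMO} applied to $u_\sigma$ with ambient space $\tilde\Omega$. Indeed, $u_\sigma\in C^\infty(\mathbb{R}^n;\mathbb{R}^m)$, and $\nabla u_\sigma$ is bounded on the bounded set $\tilde\Omega$ (if $\tilde\Omega$ is bounded; otherwise one uses $\nabla u_\sigma\in L^p_{\mathrm{loc}}$ together with the $W^{1,p}_{\mathrm{loc}}$ part of the theorem, or restricts to bounded $\tilde\Omega$, which is implicit in $\tilde\Omega\Subset\Omega$). Hence $\nabla u_\sigma\in L^p(\tilde\Omega)$.

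\emph{Last inequality.} I plan to fix $\varepsilon>0$ and a disjoint family $\mathcal{G}_\varepsilon\subset\mathcal{G}^D_\varepsilon(\tilde\Omega)$, and then bound each $\alpha_p(u_\sigma,D')$ with the Jensen-type \cref{res:jensen_alpha_p}. To place myself in $L^p(\mathbb{R}^n)$, I replace $u$ by $\bar u\coloneqq u\chi_K$, where $K$ is a compact neighbourhood of $\overline{\tilde\Omega}+\overline{B_\sigma(0)}$ contained in $\Omega$. Such a $K$ exists because $\sigma<\operatorname{dist}(\tilde\Omega,\partial\Omega)$, together with boundedness. On $\tilde\Omega$ one has $\rho_\sigma*\bar u=u_\sigma$, so \eqref{alpha_p:ext_ind} lets me compute with $\bar u$.

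Set $f\coloneqq\alpha_p(\cdot,D')$ restricted to $L^p$. It is convex by \eqref{alpha_p:conv} and lower semicontinuous by hypothesis, so \cref{res:jensen_alpha_p} applies. Combining it with \eqref{alpha_p:inner_trans_inv} gives
\[
\alpha_p(u_\sigma,D')\leq\int_{B_1(0)}\alpha_p(\bar u(\cdot-\sigma y),D')\rho(y)\,\mathrm{d}y=\int_{B_1(0)}\alpha_p(u,D'-\sigma y)\rho(y)\,\mathrm{d}y,
\]
where the last equality uses extension independence, since $D'-\sigma y\subset K$. Summing over $D'\in\mathcal{G}_\varepsilon$ and multiplying by $\varepsilon^{n-p}$, I observe that for each fixed $y$ the family $\{D'-\sigma y\}$ is pairwise disjoint and lies in $\mathcal{G}^D_\varepsilon(\Omega)$, because translations preserve the class. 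Therefore the integrand is bounded by $G^{\alpha_p}_\varepsilon(u,\Omega)$. Since $\int\rho=1$, this yields $G^{\alpha_p}_\varepsilon(u_\sigma,\tilde\Omega)\leq G^{\alpha_p}_\varepsilon(u,\Omega)$ for every $\varepsilon$. Taking $\liminf_{\varepsilon\to0^+}$ and using the existence of the limit on the left concludes.

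The main obstacle is this last step, specifically justifying the exchange of sum and integral (Tonelli, with nonnegative terms and a countable family) and the measurability of $y\mapsto\alpha_p(\bar u(\cdot-\sigma y),D')$. The latter follows from lower semicontinuity of $f$ composed with the continuous map $y\mapsto\bar u(\cdot-\sigma y)$ into $L^p$. One also has to take care that $f$ is defined on all of $L^p$ and takes finite values, as \cref{res:jensen_alpha_p} requires; this holds because $\alpha_p$ takes values in $[0,+\infty)$.
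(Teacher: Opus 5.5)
Your proof is correct and follows the paper's argument: the Jensen-type lemma combined with (T2) bounds $\alpha_p(u_\sigma,D')$ by averages of $\alpha_p(u,D'-\sigma y)$ over translated disjoint families inside $\Omega$. The main theorem applied to $u_\sigma$ on $\tilde\Omega$ then gives the middle equality, and the positive minimum of $\psi_{\alpha_p}$ on the unit sphere of $\mathcal{P}$ gives the first inequality.

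Your cut-off $\bar u = u\chi_K$ with $K\supset\overline{\tilde\Omega}+\overline{B_\sigma(0)}$ is the right way to carry out the paper's ``without loss of generality'' reduction. Zeroing $u$ outside $\tilde\Omega$, as literally written there, would change $u_\sigma$ near $\partial\tilde\Omega$ and also change the right-hand side. Your fallback for unbounded $\tilde\Omega$ is moot, since $\tilde\Omega\Subset\Omega$ already forces boundedness.
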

\begin{proof}
 Let $0< \sigma < \operatorname{dist}\tonde*{ \tilde{\Omega} , \partial \Omega } $.
 Without loss of generality, having \eqref{tipo_BMO:ext_ind} in mind, we may assume that $\restr{ u }{ \mathbb{R}^{n} \setminus \tilde{\Omega}}=0$
 so that $ u \in L^{p}\tonde*{\mathbb{R}^{n};\mathbb{R}^{m}} $.
 Let $ \varepsilon >0$ and
 $ \mathcal{G}_{\varepsilon} $ be a family made of pairwise disjoint elements of $ \mathcal{G}^{ D}_{\varepsilon}\tonde*{\tilde{\Omega}} $ and
 we observe that $ \sigma >0$ is so small that $ D' - \sigma y \subset \Omega $ for every $ y \in B_{1}\tonde*{0} $.
 Applying \cref{res:jensen_alpha_p}
 (with $ f\tonde*{ v } \coloneqq \alpha_{ p}\tonde*{ v , D' } $ for all $ v \in L^{p}\tonde*{\mathbb{R}^{n};\mathbb{R}^{m}} $) and \eqref{alpha_p:inner_trans_inv}, we get,
 \begin{align*}
 & \varepsilon ^{ n - p } \sum_{ D'\in\mathcal{G}_{\varepsilon}} \alpha_{ p}\tonde*{ u_{ \sigma }, D' } 
 \leq
 \varepsilon ^{ n - p } \sum_{ D'\in\mathcal{G}_{\varepsilon}} \int_{ B_{1}\tonde*{0}} \alpha_{ p}\tonde*{ u\tonde*{.- \sigma y }, D' } \rho \tonde*{ y } \thinspace\mathrm{d} y 
 \\
 &=
 \varepsilon ^{ n - p } \sum_{ D'\in\mathcal{G}_{\varepsilon}} \int_{ B_{1}\tonde*{0}} \alpha_{ p}\tonde*{ u , D' - \sigma y } \rho \tonde*{ y } \thinspace\mathrm{d} y 
 \leq
 G^{\alpha_{p}}_{ \varepsilon }\tonde*{ u,\Omega } .
 \end{align*}
 Taking the supremum with respect to $ \mathcal{G}_{\varepsilon} $ and letting $ \varepsilon \to 0^{+} $, we obtain
 \begin{equation*}
 G^{ \alpha_{ p}}_{ -}\tonde*{ u_{ \sigma } , \tilde{\Omega}} \leq G^{ \alpha_{ p}}_{ -}\tonde*{ u,\Omega } .
 \end{equation*}
 \eqref{tesi:cor} follows by applying \cref{thm:Main_BMO} to $ u_{ \sigma } $
 and invoking \cref{res:prop_psi_alphap}
 and \cref{res:psi_alphap_Pspace}.
\end{proof}

The following result is a first step in characterizing Sobolev-type spaces.
\begin{corollary}[][res:cor_Main_BMO2]
 Let $ \Omega \subseteq \mathbb{R}^{n} $ be an open set, $ m \geq1$ a natural number, $ p \in \left(1,\infty\right) $,
 $ \mathcal{G} $ a subgroup of $ \mathrm{SO}\tonde*{n} $ and $ D \subseteq \mathbb{R}^{n} $ be a bounded open set.
 Let $ \alpha_{ p} = \alpha_{ p , D, \Gamma ^{\mathcal{G}} ,m } $ 
 be a $p$-core functional as in \cref{def:core_function}
 subordinated to $ \Gamma ^{\mathcal{G}} $ as in \cref{def:Gamma_G}.
 Assume also that
 for every $ D' \in \mathcal{G}^{ D}\tonde*{\mathbb{R}^{n}} $ the functional
 $\restr{ \alpha_{ p}\tonde*{\cdot, D' }}{ L^{p}\tonde*{\mathbb{R}^{n};\mathbb{R}^{m}}}: L^{p}\tonde*{\mathbb{R}^{n};\mathbb{R}^{m}} \to \left[0,+\infty\right) $
 is lower semicontinuous.

 Let $ \mathcal{P} $ be a linear subspace of $ \mathbb{R}^{m\times n} $ as in \eqref{eq:dir_sum_dec}
 and $ \pi_{\mathcal{P}} : \mathbb{R}^{m\times n} \to \mathcal{P} $ as in \cref{res:psi_alphap_Pspace}.
 Let $ u \in L^{p}_{\mathrm{loc}}\tonde*{\mathbb{R}^{n};\mathbb{R}^{m}} $. Then,
 \begin{equation}\label{abcjh219h}
 G^{ \alpha_{ p}}_{ -}\tonde*{ u,\Omega } <+\infty\implies D_{ \pi_{\mathcal{P}}}\tonde*{\restr{ u }{ \Omega }} \in L^{p}\tonde*{\Omega;\mathbb{R}^{m\times n}} .
 \end{equation}
 Moreover, if $ \psi_{\alpha_{p}} >0$ in $ \mathbb{R}^{m\times n} \setminus\set*{0}$,
 then $ \nabla \restr{ u }{ \Omega } \in L^{p}\tonde*{\Omega;\mathbb{R}^{m\times n}} $ if and only if $ G^{ \alpha_{ p}}_{ -}\tonde*{ u,\Omega } <+\infty$.
\end{corollary}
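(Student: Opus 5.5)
The plan is to combine the uniform bound of \cref{res:cor_Main_BMO} with weak compactness in $L^p$, which is available because $p>1$. Assume $M\coloneqq G^{\alpha_p}_{-}\tonde*{u,\Omega}<+\infty$.

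First fix $\tilde\Omega\Subset\Omega$. By \eqref{tesi:cor}, for every $0<\sigma<\operatorname{dist}(\tilde\Omega,\partial\Omega)$ we have
\[
\int_{\tilde\Omega}\abs*{\pi_{\mathcal P}(\nabla u_\sigma)}_2^p\,\mathrm{d}x\le M/C .
\]
Here $C$ depends only on $\psi_{\alpha_p}$, not on $\tilde\Omega$ or $\sigma$. Since $p\in(1,\infty)$, $L^p(\tilde\Omega;\mathbb{R}^{m\times n})$ is reflexive, so along a sequence $\sigma_k\to0^+$ we get $\pi_{\mathcal P}(\nabla u_{\sigma_k})\rightharpoonup g$ weakly in $L^p(\tilde\Omega)$. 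By weak lower semicontinuity of the norm, $\norm{g}_{L^p(\tilde\Omega)}^p\le M/C$.

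Next, identify $g$ with the distribution $D_{\pi_{\mathcal P}}u$ on $\tilde\Omega$. The smooth functions $u_\sigma$ satisfy $\pi_{\mathcal P}(\nabla u_\sigma)=D_{\pi_{\mathcal P}}u_\sigma$. By \cref{def:DL}, for every $\phi\in C^\infty_c(\tilde\Omega)$,
\[
\dpair*{(D_{\pi_{\mathcal P}}u_\sigma)_{ij},\phi}=-\int\pi_{\mathcal P}(u_\sigma\otimes\nabla\phi)_{ij}\,\mathrm{d}x .
\]
Since $u_\sigma\to u$ in $L^p_{\mathrm{loc}}$, the right-hand side converges to $\dpair*{(D_{\pi_{\mathcal P}}u)_{ij},\phi}$. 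The weak limit gives the left-hand side converging to $\int g_{ij}\phi\,\mathrm{d}x$. Hence $D_{\pi_{\mathcal P}}(\restr{u}{\tilde\Omega})=g\in L^p$ with norm bounded by $(M/C)^{1/p}$, independently of $\tilde\Omega$.

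Then pass to $\Omega$ using an exhaustion $\tilde\Omega_j\uparrow\Omega$. The representatives on $\tilde\Omega_j$ are consistent, since a distribution is determined locally. They therefore define $g\in L^p_{\mathrm{loc}}(\Omega)$, and monotone convergence gives $\norm{g}^p_{L^p(\Omega)}\le M/C$. This proves \eqref{abcjh219h}.

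For the equivalence, assume $\psi_{\alpha_p}>0$ on $\mathbb{R}^{m\times n}\setminus\{0\}$. Then $\mathcal N_{\psi_{\alpha_p}}=\{0\}$, so $\mathcal P=\mathbb{R}^{m\times n}$, $\pi_{\mathcal P}=\operatorname{id}$, and $D_{\pi_{\mathcal P}}=\nabla$. The forward implication is then exactly \eqref{abcjh219h}. The converse follows from \cref{res:tip_BMO_bound}: $G^{\alpha_p}_-\le c_1\int_\Omega\abs{\nabla u}_2^p<\infty$.

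The step needing the most care is the identification of the weak limit. In particular, one must check that the constant in \eqref{tesi:cor} is uniform in $\tilde\Omega$, so that the global $L^p$ bound survives the exhaustion. Note also that local $L^p$ convergence of $u_\sigma$ only holds on $\tilde\Omega$ for small $\sigma$; this is harmless because the test functions have compact support.
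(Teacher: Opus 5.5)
Your argument is correct and is essentially the paper's proof. Both use the same three ingredients: the $\tilde\Omega$-uniform bound from \eqref{tesi:cor}, the distributional convergence $\pi_{\mathcal P}(\nabla u_\sigma)\to D_{\pi_{\mathcal P}}(\restr{u}{\Omega})$, and the duality/reflexivity of $L^p$ for $p>1$. The paper packages the last step slightly differently: it bounds $\abs{\dpair{D_{\pi_{\mathcal P}}u,\phi}}\le (M/C)^{1/p}\norm{\phi}_{L^{p'}}$ for every $\phi\in C^\infty_c(\Omega)$ and applies Riesz representation on all of $\Omega$ at once, which spares your weak-compactness-plus-gluing step; for the converse it cites \cref{thm:Main_BMO}, whereas your appeal to \cref{res:tip_BMO_bound} is the more direct route.
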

\begin{remark}[][res:optimality]
 Under the assumptions of \cref{res:cor_Main_BMO2},
 if $ u \in L^{p}_{\mathrm{loc}}\tonde*{\mathbb{R}^{n};\mathbb{R}^{m}} $ is such that
 $ D_{ \pi_{\mathcal{P}}}\tonde*{\restr{ u }{ \Omega }} \in L^{p}\tonde*{\Omega;\mathbb{R}^{m\times n}} $,
 in general we cannot conclude that $ G^{ \alpha_{ p}}_{ -}\tonde*{ u,\Omega } <+\infty$ (cfr. \cref{ex:counterex_psi_null}).
\end{remark}
\begin{proof}[Proof of \cref{res:cor_Main_BMO2}]
 Let $ p \in \left(1,\infty\right) $, $ u \in L^{p}_{\mathrm{loc}}\tonde*{\mathbb{R}^{n};\mathbb{R}^{m}} $ and $ u_{ \sigma } \coloneqq \rho _{ \sigma } \ast u $
 for every $ \sigma >0$.
 If $ G^{ \alpha_{ p}}_{ -}\tonde*{ u,\Omega } <+\infty$, using \eqref{tesi:cor}
 we obtain that 
 \begin{equation}\label{s43f}
 \limsup\limits_{ \sigma \to 0^{+}} \int_{ \tilde{\Omega}} \abs*{ \pi_{\mathcal{P}}\tonde*{ \nabla u_{ \sigma }}}^{ p }_{2} \thinspace\mathrm{d} x 
 \leq
 \frac{1}{ C }
 G^{ \alpha_{ p}}_{ -}\tonde*{ u,\Omega } ,
 \end{equation} 
 for every $ \tilde{\Omega} \in \mathcal{A}_{\Omega} $ s.t. $ \tilde{\Omega} \Subset \Omega $.
 Since $ u_{ \sigma } \to u $ in $ L^{p}_{\mathrm{loc}}\tonde*{\mathbb{R}^{n};\mathbb{R}^{m}} $ as $ \sigma \to 0^{+} $,
 we deduce that 
 $ \pi_{\mathcal{P}}\tonde*{ \nabla \restr{ u_{ \sigma }}{ \Omega }} \to D_{ \pi_{\mathcal{P}}}\tonde*{\restr{ u }{ \Omega }} $
 as $ \sigma \to 0^{+} $ in the sense of distributions.
 Then, from \eqref{s43f} (since it provides a uniform bound with respect to $ \tilde{\Omega} $)
 we get that the distribution 
 $ D_{ \pi_{\mathcal{P}}}\tonde*{\restr{ u }{ \Omega }} $
 is a linear and continuous functional
 on $ L^{ p^{\prime}}\tonde*{\Omega;\mathbb{R}^{m}} $.
 This proves that $ D_{ \pi_{\mathcal{P}}}\tonde*{\restr{ u }{ \Omega }} \in L^{p}\tonde*{\Omega;\mathbb{R}^{m\times n}} $.

 Finally, if $ \psi_{\alpha_{p}} >0$ in $ \mathbb{R}^{m\times n} \setminus\set*{0}$,
 then
 $ \mathcal{P} = \mathbb{R}^{m\times n} $ and $ \pi_{\mathcal{P}} = \operatorname{id}_{\mathbb{R}^{m\times n}} $.
 Invoking what proved above and \cref{thm:Main_BMO} we deduce that 
 $ \nabla \restr{ u }{ \Omega } = \pi_{\mathcal{P}}\tonde*{ \nabla \restr{ u }{ \Omega }} \in L^{p}\tonde*{\Omega;\mathbb{R}^{m\times n}} $
 if and only if $ G^{ \alpha_{ p}}_{ -}\tonde*{ u,\Omega } <+\infty$.
\end{proof}

Now for strong $p$-core functionals as a consequence of \cref{thm:Main_BMO_WL} we obtain the following characterization.

\begin{corollary}[][res:cor_Main_BMO2_WL]
 Let $ \Omega \subseteq \mathbb{R}^{n} $ be an open set, $ m \geq1$ a natural number, $ p \in \left(1,\infty\right) $,
 $ \mathcal{G} =\set*{ \operatorname{id}_{\mathbb{R}^{n}}}$ and $ D \subseteq \mathbb{R}^{n} $ be a bounded open set.
 Let $ \alpha_{ p} = \alpha_{ p , D, \Gamma ^{\mathcal{G}} ,m } $ 
 be a strong $p$-core functional as in \cref{def:strong_core_function}
 subordinated to $ \Gamma ^{\mathcal{G}} $ as in \cref{def:Gamma_G}.
 Assume also that
 for every $ D' \in \mathcal{G}^{ D}\tonde*{\mathbb{R}^{n}} $ the functional
 $\restr{ \alpha_{ p}\tonde*{\cdot, D' }}{ L^{p}\tonde*{\mathbb{R}^{n};\mathbb{R}^{m}}}: L^{p}\tonde*{\mathbb{R}^{n};\mathbb{R}^{m}} \to \left[0,+\infty\right) $
 is lower semicontinuous.

 Let $ \mathcal{P} $ be a linear subspace of $ \mathbb{R}^{m\times n} $ as in \eqref{eq:dir_sum_dec}
 and $ \pi_{\mathcal{P}} : \mathbb{R}^{m\times n} \to \mathcal{P} $ as in \cref{res:psi_alphap_Pspace}.
 Let $ u \in L^{p}_{\mathrm{loc}}\tonde*{\mathbb{R}^{n};\mathbb{R}^{m}} $. Then,
 \begin{equation}\label{abcjh219h_new}
 G^{ \alpha_{ p}}_{ -}\tonde*{ u,\Omega } <+\infty\iff D_{ \pi_{\mathcal{P}}}\tonde*{\restr{ u }{ \Omega }} \in L^{p}\tonde*{\Omega;\mathbb{R}^{m\times n}} .
 \end{equation}
 Moreover, if one of the equivalent conditions in \cref{abcjh219h_new} is in force,
 then \eqref{eq:tesi_Main_BMO_WL} holds.
\end{corollary}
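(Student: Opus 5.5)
The plan is to prove the two implications in \eqref{abcjh219h_new} separately and then obtain the final claim from \cref{thm:Main_BMO_WL}.

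For the implication ``$\Rightarrow$'', note that a strong $p$-core functional is in particular a $p$-core functional, by the observation following \cref{def:strong_core_function}. Moreover, $\mathcal{G}=\set*{\operatorname{id}_{\mathbb{R}^{n}}}$ is a subgroup of $\mathrm{SO}\tonde*{n}$, and we assume lower semicontinuity on $L^{p}\tonde*{\mathbb{R}^{n};\mathbb{R}^{m}}$. Hence all hypotheses of \cref{res:cor_Main_BMO2} hold with the same decomposition \eqref{eq:dir_sum_dec}. Its implication \eqref{abcjh219h} then gives directly that $G^{\alpha_p}_{-}\tonde*{u,\Omega}<+\infty$ forces $D_{\pi_{\mathcal{P}}}\tonde*{\restr{u}{\Omega}}\in L^{p}\tonde*{\Omega;\mathbb{R}^{m\times n}}$. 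One point to check: the subspace $\mathcal{P}$ appearing in \eqref{alpha_p:poin_typ_bdd_Pspace} must be the one used in the statement. I take $\mathcal{P}$ to be the one provided by \cref{def:strong_core_function}, which satisfies \eqref{eq:dir_sum_dec}, so \cref{res:cor_Main_BMO2} applies to it.

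For the implication ``$\Leftarrow$'', assume $D_{\pi_{\mathcal{P}}}\tonde*{\restr{u}{\Omega}}\in L^{p}$. The bound \eqref{eq:utile}, which is derived from \eqref{alpha_p:poincare_+_Pspace} and uses $\mathcal{G}=\set*{\operatorname{id}}$, gives
\[
G^{\alpha_p}_{-}\tonde*{u,\Omega}\leq G^{\alpha_p}_{+}\tonde*{u,\Omega}\leq c_{3}\int_{\Omega}\abs*{D_{\pi_{\mathcal{P}}}\tonde*{\restr{u}{\Omega}}}^{p}_{2}\,\mathrm{d}x<+\infty.
\]
One subtlety is that \eqref{alpha_p:poincare_+_Pspace} requires $D_{\pi_{\mathcal{P}}}\tonde*{\restr{u}{D'}}\in L^{p}(D')$ for each $D'\subset\Omega$. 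This follows by restricting the distribution to $D'$, since test functions supported in $D'$ are also test functions in $\Omega$.

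Finally, once either condition holds we are in the setting $D_{\pi_{\mathcal{P}}}\tonde*{\restr{u}{\Omega}}\in L^{p}$. \cref{thm:Main_BMO_WL}, whose hypotheses coincide with those of this corollary apart from the lower semicontinuity assumption, then yields \eqref{eq:tesi_Main_BMO_WL}.

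The only step I expect to need care is the identification of $\mathcal{P}$ in ``$\Rightarrow$''. If the statement allows an arbitrary complement $\mathcal{P}$ in \eqref{eq:dir_sum_dec}, I would argue that this is harmless: \cref{res:cor_Main_BMO2} holds for any such $\mathcal{P}$, and for ``$\Leftarrow$'' I would fix the $\mathcal{P}$ from \cref{def:strong_core_function}, as the statement implicitly does.
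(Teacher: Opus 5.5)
Your proposal is correct and coincides with the paper's proof: ``$\Leftarrow$'' from \eqref{eq:utile}, ``$\Rightarrow$'' from \cref{res:cor_Main_BMO2}, and the limit formula from \cref{thm:Main_BMO_WL}. The choice of $\mathcal{P}$ is harmless for a cleaner reason than the one you give: for two complements $\mathcal{P},\mathcal{P}'$ of $\mathcal{N}_{\psi_{\alpha_p}}$ one has $\pi_{\mathcal{P}'}=\pi_{\mathcal{P}'}\circ\pi_{\mathcal{P}}$ (and vice versa), hence $D_{\pi_{\mathcal{P}'}}u=\pi_{\mathcal{P}'}(D_{\pi_{\mathcal{P}}}u)$, so both the integrability condition and $\psi_{\alpha_p}(D_{\pi_{\mathcal{P}}}u)$ are independent of the complement chosen.
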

\begin{proof}
 We first assume that $ D_{ \pi_{\mathcal{P}}}\tonde*{\restr{ u }{ \Omega }} \in L^{p}\tonde*{\Omega;\mathbb{R}^{m\times n}} $. Then, by 
 \cref{eq:utile} in \cref{res:tip_BMO_bound_Pspace} we obtain that $ G^{ \alpha_{ p}}_{ -}\tonde*{ u,\Omega } <+\infty$.

 The reversed implication follows directly from \cref{res:cor_Main_BMO2}.

 Finally, if one of the equivalent conditions in \cref{abcjh219h_new} is in force, \eqref{eq:tesi_Main_BMO_WL} follows
 by \cref{thm:Main_BMO_WL}.
\end{proof}

The following result is a constancy theorem.
\begin{corollary}[][res:constancy]
 Let $ \Omega \subseteq \mathbb{R}^{n} $ be an open connected set, $ m \geq1$ a natural number, $ p \in \left(1,\infty\right) $,
 $ \mathcal{G} $ a subgroup of $ \mathrm{SO}\tonde*{n} $ and $ D \subseteq \mathbb{R}^{n} $ be a bounded open set.
 Let $ \alpha_{ p} = \alpha_{ p , D, \Gamma ^{\mathcal{G}} ,m } $ 
 be a $p$-core functional as in \cref{def:core_function}
 subordinated to $ \Gamma ^{\mathcal{G}} $ as in \cref{def:Gamma_G}.
 Let $ u \in L^{p}_{\mathrm{loc}}\tonde*{\mathbb{R}^{n};\mathbb{R}^{m}} $.
 Assume also that
 for every $ D' \in \mathcal{G}^{ D}\tonde*{\mathbb{R}^{n}} $ the functional
 $\restr{ \alpha_{ p}\tonde*{\cdot, D' }}{ L^{p}\tonde*{\mathbb{R}^{n};\mathbb{R}^{m}}}: L^{p}\tonde*{\mathbb{R}^{n};\mathbb{R}^{m}} \to \left[0,+\infty\right) $
 is lower semicontinuous and that $ G^{ \alpha_{ p}}_{ -}\tonde*{ u,\Omega } =0$.

 Then, $ D_{ \pi_{\mathcal{P}}}\tonde*{\restr{ u }{ \Omega }} =0$.
 As a consequence, if $ \psi_{\alpha_{p}} >0$ in $ \mathbb{R}^{m\times n} $, then
 $ \nabla \restr{ u }{ \Omega } =0$ and $ u $ is constant $ \operatorname{a.e.} $
 on $ \Omega $.

 Moreover, if we also assume that $ \mathcal{G} =\set*{ \operatorname{id}_{\mathbb{R}^{n}}}$ and $\alpha_p$
 is a strong $p$-core functional,
 then $ \alpha_{ p}\tonde*{ u , D' } =0$ for every
 $ D' \in \mathcal{G}^{ D}\tonde*{\Omega} $.
 
\end{corollary}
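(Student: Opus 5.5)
The argument splits into three parts, corresponding to the three conclusions of the statement. First, I would show $D_{\pi_{\mathcal{P}}}(\restr{u}{\Omega})=0$. Since $G^{\alpha_p}_-(u,\Omega)=0<+\infty$, \cref{res:cor_Main_BMO2} already gives $D_{\pi_{\mathcal{P}}}(\restr{u}{\Omega})\in L^p$. To see that it vanishes, I would use \eqref{tesi:cor} from \cref{res:cor_Main_BMO}. For every $\tilde\Omega\Subset\Omega$ and every small $\sigma$,
\[
C\int_{\tilde\Omega}\abs*{\pi_{\mathcal{P}}(\nabla u_\sigma)}_2^p\,\mathrm{d}x\leq G^{\alpha_p}_-(u,\Omega)=0 .
\]
Hence $\pi_{\mathcal{P}}(\nabla u_\sigma)=0$ on $\tilde\Omega$. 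Because $u_\sigma\to u$ in $L^p_{\mathrm{loc}}$, we have $\pi_{\mathcal{P}}(\nabla u_\sigma)\to D_{\pi_{\mathcal{P}}}(\restr{u}{\Omega})$ in the sense of distributions on $\tilde\Omega$. Testing against $\phi\in C^\infty_c(\tilde\Omega)$ and exhausting $\Omega$ then gives $D_{\pi_{\mathcal{P}}}(\restr{u}{\Omega})=0$.

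Second, assume $\psi_{\alpha_p}>0$ on $\mathbb{R}^{m\times n}\setminus\{0\}$. This is how I read the hypothesis, since $\psi_{\alpha_p}(0)=0$ always. Then $\mathcal{N}_{\psi_{\alpha_p}}=\{0\}$, so $\mathcal{P}=\mathbb{R}^{m\times n}$ and $\pi_{\mathcal{P}}=\operatorname{id}$. The previous step therefore gives $\nabla\restr{u}{\Omega}=0$ in the distributional sense. Since $\Omega$ is connected, the standard result (mollify, then use local constancy and connectedness) yields that $u$ is a.e. equal to a constant on $\Omega$.

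Third, assume in addition $\mathcal{G}=\{\operatorname{id}\}$ and that $\alpha_p$ is strong. Fix $D'\in\mathcal{G}^D(\Omega)$, which is a translate of $\varepsilon D$ with $D'\subset\Omega$. Since $D_{\pi_{\mathcal{P}}}(\restr{u}{D'})$ is the restriction of $D_{\pi_{\mathcal{P}}}(\restr{u}{\Omega})=0$, it vanishes and in particular lies in $L^p(D')$. The bound \eqref{alpha_p:poincare_+_Pspace} then gives
\[
\alpha_p(u,D')\leq c_4\operatorname{diam}(D')^{p-n}\int_{D'}\abs*{D_{\pi_{\mathcal{P}}}(\restr{u}{D'})}_2^p\,\mathrm{d}x=0 .
\]
Positivity of $\alpha_p$ gives $\alpha_p(u,D')=0$.

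The step I expect to be the main obstacle is the passage to the distributional limit in the first part. It needs the identification $\pi_{\mathcal{P}}(\nabla u_\sigma)=D_{\pi_{\mathcal{P}}}u_\sigma$ for smooth functions, together with the fact that the distributional operator commutes with $L^p_{\mathrm{loc}}$ limits. The latter is immediate from \cref{def:DL}, since $D_{\mathfrak{L}}$ is defined by integrating $u$ against derivatives of test functions. The rest is bookkeeping with the earlier results.
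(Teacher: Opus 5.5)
Your proof is correct and follows the paper's argument essentially step by step. Inequality \eqref{tesi:cor} with $G^{\alpha_p}_-(u,\Omega)=0$ forces $\pi_{\mathcal{P}}(\nabla u_\sigma)=0$ on each $\tilde\Omega\Subset\Omega$; the distributional convergence $\pi_{\mathcal{P}}(\nabla u_\sigma)\to D_{\pi_{\mathcal{P}}}(\restr{u}{\Omega})$ then gives $D_{\pi_{\mathcal{P}}}(\restr{u}{\Omega})=0$; and \eqref{alpha_p:poincare_+_Pspace} yields the final claim. Two minor remarks: your preliminary appeal to \cref{res:cor_Main_BMO2} for $L^p$-membership is harmless but unnecessary, and your reading of the positivity hypothesis as $\psi_{\alpha_p}>0$ on $\mathbb{R}^{m\times n}\setminus\{0\}$ is the intended one.
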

\begin{proof}
We let $ u_{ \sigma } \coloneqq \rho _{ \sigma } \ast u $ for every $ \sigma >0$.
Since $ u_{ \sigma } \to u $ in $ L^{p}_{\mathrm{loc}}\tonde*{\mathbb{R}^{n};\mathbb{R}^{m}} $ as $ \sigma \to 0^{+} $,
as in the proof of \cref{res:cor_Main_BMO2},
we deduce that 
$ \pi_{\mathcal{P}}\tonde*{ \nabla \restr{ u_{ \sigma }}{ \Omega }} \to D_{ \pi_{\mathcal{P}}}\tonde*{\restr{ u }{ \Omega }} \in L^{p}\tonde*{\Omega;\mathbb{R}^{m\times n}} $.
Since $ G^{ \alpha_{ p}}_{ -}\tonde*{ u,\Omega } =0$, by \cref{res:cor_Main_BMO} we obtain that 
$ \pi_{\mathcal{P}}\tonde*{ \nabla \restr{ u_{ \sigma }}{ \Omega }} $ vanishes on every 
$ \tilde{\Omega} \in \mathcal{A}_{\Omega} $ s.t. $ \tilde{\Omega} \Subset \Omega $ for every $0< \sigma < \operatorname{dist}\tonde*{ \tilde{\Omega} , \partial \Omega } $. 
Therefore, $ D_{ \pi_{\mathcal{P}}}\tonde*{\restr{ u }{ \Omega }} =0$.
Moreover, if $ \psi_{\alpha_{p}} >0$ in $ \mathbb{R}^{m\times n} $,
then $ \nabla \restr{ u }{ \Omega } = D_{ \pi_{\mathcal{P}}}\tonde*{\restr{ u }{ \Omega }} =0$.

Finally,
if we also assume that $ \mathcal{G} =\set*{ \operatorname{id}_{\mathbb{R}^{n}}}$ and $\alpha_p$
is a strong $p$-core functional,
then, by \cref{alpha_p:poincare_+_Pspace} we deduce that $ \alpha_{ p}\tonde*{ u , D' } =0$ for every $ D' \in \mathcal{G}^{ D}\tonde*{\Omega} $.

\end{proof}

\section{Examples and applications}\label{sec:applications}

In this section we discuss some examples of $p$-core functionals 
which can be used to apply the results in \cref{sec:genaralBMO} and \cref{sec:characterizations}.
We start with a proposition which will be used to provide more explicit representation of the function $\psi_{\alpha_p}$ for some $p$-core functionals $\alpha_p$.

\begin{proposition}[][res:formula_psi]
 Let $ m \geq1$ be a natural number, $ p \in \left[1,\infty\right) $,
 $ \mathcal{G} $ a subgroup of $ \mathrm{SO}\tonde*{n} $ and $ D \subseteq \mathbb{R}^{n} $ be a bounded open set.
 Let $ \alpha_{ p} = \alpha_{ p , D, \Gamma ^{\mathcal{G}} ,m } $ 
 be a $p$-core functional as in \cref{def:core_function}
 subordinated to $ \Gamma ^{\mathcal{G}} $ as in \cref{def:Gamma_G}.

 Then, for every $ A \in \mathbb{R}^{m\times n} $, $ \psi_{\alpha_{p}} $ is constant on $ A \mathcal{G} \coloneqq\set*{ A O : O \in \mathcal{G}}$ and
 \begin{equation}\label{eq:char_psip_Gsub}
 \psi_{\alpha_{p}}\tonde*{ A } \leq\frac{1}{ \abs*{ D }} \max\limits_{ B \in A \mathcal{G}} \alpha_{ p}\tonde*{ l^{ B}, D } ,
 \end{equation}
 where $ \psi_{\alpha_{p}} $ is defined in \cref{def:psi_alphap}.

 Moreover, assume that for every $ \mathcal{R} \in \mathcal{G} $ there exists a family $ \graffe*{ \mathcal{G}_{\varepsilon}}_{0< \varepsilon <1} $
 such that:
 \begin{enumerate}
 \item\label{qwe1} for every $ \varepsilon \in \left(0,1\right) $, $ \mathcal{G}_{\varepsilon} $ is a family made of pairwise disjoint elements of $ \mathcal{G}^{ D}_{\varepsilon}\tonde*{ Q} $ of the form:
 \begin{equation*}
 D' = \varepsilon \mathcal{R} D + h_{D'} ,
 \end{equation*}
 where $ h_{D'} \in \mathbb{R}^{n} $ depends on $ D' $;
 \item\label{qwe2}the following \emph{no-gaps condition} holds,
 \begin{equation}\label{eq:nostra_nogaps}
 \lim\limits_{\varepsilon\to 0^{+}} \abs*{ \mathcal{G}_{\varepsilon}^{\cup}} = \abs*{ Q } =1,
 \end{equation}
 where, for every $ \varepsilon \in \left(0,1\right) $, $ \mathcal{G}_{\varepsilon}^{\cup} \subset Q $ is the union of all the sets belonging to $ \mathcal{G}_{\varepsilon} $.
 \end{enumerate}
 Then, the equality holds in \eqref{eq:char_psip_Gsub}.
\end{proposition}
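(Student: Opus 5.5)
The plan is to reduce everything to the scaling identity \eqref{eq:loclip_key2} from the proof of \cref{res:Amat_loc_lip}. For $D'=\varepsilon\mathcal{R}D+h$ with $\mathcal{R}\in\mathcal{G}$, that identity reads
\[
\alpha_p(l^A,D')=\varepsilon^p\,\alpha_p(l^{A\mathcal{R}},D).
\]

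\textbf{Invariance on $A\mathcal{G}$.} Fix $O\in\mathcal{G}$. Since $l^{AO}=l^A\circ O$, \eqref{alpha_p:change_of_variable} gives
\[
\alpha_p(l^{AO},D')=\alpha_p(l^A,OD')\qquad\text{for every } D'\in\mathcal{G}^D_\varepsilon(\mathbb{R}^n).
\]
The map $D'\mapsto OD'$ is a bijection from families of pairwise disjoint elements of $\mathcal{G}^D_\varepsilon(Q)$ onto families of pairwise disjoint elements of $\mathcal{G}^D_\varepsilon(OQ)$, because $\mathcal{G}$ is a group. Hence
\[
G^{\alpha_p}_\varepsilon(l^{AO},Q)=G^{\alpha_p}_\varepsilon(l^A,OQ).
\]
Now $OQ$ is a unitary cube centered at the origin, so \eqref{eq:psi_ind_cubo} yields $\psi_{\alpha_p}(AO)=\psi_{\alpha_p}(A)$.

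\textbf{Upper bound.} Take any admissible family $\mathcal{G}_\varepsilon$ in $Q$. Each element has the form $\varepsilon\mathcal{R}D+h$ with $\mathcal{R}\in\mathcal{G}$, so by the identity above
\[
\alpha_p(l^A,D')=\varepsilon^p\alpha_p(l^{A\mathcal{R}},D)\le \varepsilon^p M,\qquad M\coloneqq\sup_{B\in A\mathcal{G}}\alpha_p(l^B,D).
\]
The family has at most $(\varepsilon^n|D|)^{-1}$ elements, so
\[
\varepsilon^{n-p}\sum_{D'\in\mathcal{G}_\varepsilon}\alpha_p(l^A,D')\le \varepsilon^{n-p}\cdot\varepsilon^p M\cdot\frac{1}{\varepsilon^n|D|}=\frac{M}{|D|}.
\]
Taking the supremum over families and letting $\varepsilon\to0^+$ gives \eqref{eq:char_psip_Gsub} with $\sup$ in place of $\max$.

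\textbf{Attainment of the maximum.} This is the step I expect to be the subtle one. The map $B\mapsto\alpha_p(l^B,D)$ is continuous by \eqref{alpha_p:locLip_su_mat}, but $\mathcal{G}$ need not be closed, so a direct compactness argument is not available. I would pass to the closure: $A\overline{\mathcal{G}}$ is compact, and by continuity $\sup_{A\mathcal{G}}=\max_{A\overline{\mathcal{G}}}$. The statement writes $\max$; if attainment on $A\mathcal{G}$ itself is needed, I would use the sharp inequality proved below. It gives $\psi_{\alpha_p}(A)=\psi_{\alpha_p}(A\mathcal{R})\ge|D|^{-1}\alpha_p(l^{A\mathcal{R}},D)$ for each $\mathcal{R}\in\mathcal{G}$. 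Combined with the upper bound, this shows the supremum equals $|D|\psi_{\alpha_p}(A)$, and then any element achieving it realizes the max.

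\textbf{Equality under the no-gaps hypothesis.} Fix $\mathcal{R}\in\mathcal{G}$ and take the family $\mathcal{G}_\varepsilon$ given by \ref{qwe1}. All its elements are translates of $\varepsilon\mathcal{R}D$, so the number of elements is $|\mathcal{G}^\cup_\varepsilon|/(\varepsilon^n|D|)$. Therefore
\[
G^{\alpha_p}_\varepsilon(l^A,Q)\ge \varepsilon^{n-p}\cdot\frac{|\mathcal{G}^\cup_\varepsilon|}{\varepsilon^n|D|}\cdot\varepsilon^p\alpha_p(l^{A\mathcal{R}},D)=\frac{|\mathcal{G}^\cup_\varepsilon|}{|D|}\,\alpha_p(l^{A\mathcal{R}},D).
\]
Letting $\varepsilon\to0^+$ and using \eqref{eq:nostra_nogaps} gives $\psi_{\alpha_p}(A)\ge|D|^{-1}\alpha_p(l^{A\mathcal{R}},D)$. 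Taking the supremum over $\mathcal{R}\in\mathcal{G}$ and combining with the upper bound yields equality in \eqref{eq:char_psip_Gsub}.
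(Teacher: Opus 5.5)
\textbf{Main steps.} Your invariance, upper-bound and no-gaps steps are the paper's proof. The paper uses the same three ingredients: \eqref{alpha_p:change_of_variable} on rotated families together with \eqref{eq:psi_ind_cubo}; the identity \eqref{eq:loclip_key2} combined with $\#\mathcal{G}_\varepsilon\le(\varepsilon^n|D|)^{-1}$; and the count $\#\mathcal{G}_\varepsilon=|\mathcal{G}_\varepsilon^\cup|/(\varepsilon^n|D|)$ for the no-gaps family. You get invariance as an equality directly from the bijection $D'\mapsto OD'$, whereas the paper proves one inequality and swaps roles; this difference is immaterial.

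\textbf{The attainment paragraph does not work.} First, the inequality $\psi_{\alpha_p}(A)\ge|D|^{-1}\alpha_p(l^{A\mathcal{R}},D)$ that you invoke comes from the no-gaps hypothesis. It therefore cannot be used for the unconditional bound \eqref{eq:char_psip_Gsub}. Second, even under that hypothesis it only shows $\sup_{B\in A\mathcal{G}}\alpha_p(l^B,D)=|D|\,\psi_{\alpha_p}(A)$. The phrase ``any element achieving it realizes the max'' presupposes the very maximizer whose existence was in question.

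\textbf{Attainment on $A\mathcal{G}$ can genuinely fail.} Take $n=2$, $m=1$, $p=2$, and let $\alpha_2$ be the mean oscillation. Let $\mathcal{G}=\{R_{k\beta}:k\in\mathbb{Z}\}$, where $R_\theta$ is the rotation by $\theta$ and $\beta/\pi\notin\mathbb{Q}$. Let $D=R_{\theta_0}\bigl((-a,a)\times(-b,b)\bigr)$ with $a>b$ and $-\theta_0\notin\beta\mathbb{Z}+\pi\mathbb{Z}$, and take $A=e_1^{T}$. Then
\[
\alpha_2(l^{AR_\phi},D)=\tfrac13\bigl(a^2\cos^2(\theta_0+\phi)+b^2\sin^2(\theta_0+\phi)\bigr).
\]
Over the dense set $\{k\beta\}$ its supremum is $a^2/3$. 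This value is attained only at $\phi\equiv-\theta_0$ modulo $\pi$, that is, only at rotations outside $\mathcal{G}$.

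\textbf{How to read the statement.} The statement must be read with $\sup_{B\in A\mathcal{G}}$, or equivalently $\max_{B\in A\overline{\mathcal{G}}}$, as you correctly observed. That version is exactly what your upper-bound and no-gaps steps prove. For closed $\mathcal{G}$, such as $\{\mathrm{id}\}$ or $\mathrm{SO}(n)$ (the cases used later in the paper), the two readings coincide by compactness. The paper's own proof writes $\max$ without addressing this point. You should drop the attempted attainment argument rather than try to repair it.
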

\begin{remark}
 We first observe that the condition \eqref{eq:nostra_nogaps}
 is equivalent to,
 \begin{equation}\label{eq:nostra_nogaps_vecchia}
 \lim\limits_{\varepsilon\to 0^{+}} \abs*{ Q \setminus \mathcal{G}_{\varepsilon}^{\cup}} =0,
 \end{equation}
 since $ \mathcal{G}_{\varepsilon}^{\cup} \subset Q $ for every $ \varepsilon \in \left(0,1\right) $.
 \eqref{eq:nostra_nogaps} is also equivalent to,
 \begin{equation*}\label{eq:nostra_nogaps2}
 \lim\limits_{\varepsilon\to 0^{+}} \varepsilon ^ n \cardinality[ \mathcal{G}_{\varepsilon}]=\frac{1}{ \abs*{ D }},
 \end{equation*}
 being 
 $ \abs*{ \mathcal{G}_{\varepsilon}^{\cup}} = \abs*{ D } \varepsilon ^ n \cardinality[ \mathcal{G}_{\varepsilon}]$ for every $ \varepsilon \in \left(0,1\right) $.

 We also observe that \eqref{eq:nostra_nogaps} and \eqref{eq:nostra_nogaps_vecchia}
 are related to \cite[Def. 2]{DiFFio20-MR4093787}. Indeed, by Lebesgue's dominated convergence theorem,
 the condition
 \begin{equation}\label{eq:nogaps_dif_fio}
 \lim\limits_{\varepsilon\to 0^{+}} \chi_{ \mathcal{G}_{\varepsilon}^{\cup}}=\chi_{ Q }\text{ } \operatorname{a.e.} ,
 \end{equation} 
 implies \eqref{eq:nostra_nogaps_vecchia}. 
 Moreover, we also observe that the weaker condition
 \begin{equation*}\label{eq:nogaps_dif_fio_weak}
 \liminf\limits_{\varepsilon\to 0^{+}} \chi_{ \mathcal{G}_{\varepsilon}^{\cup}}\geq\chi_{ Q }\text{ } \operatorname{a.e.} ,
 \end{equation*}
 implies \eqref{eq:nogaps_dif_fio},
 since $ \mathcal{G}_{\varepsilon}^{\cup} \subset Q $ for every $ \varepsilon \in \left(0,1\right) $.
\end{remark}

\begin{remark}[][res:tessellation_useful]
 Let $ D \subset \mathbb{R}^{n} $ be a bounded open set, $ \varepsilon_{0} >0$
 and assume that $ D $ is a reference cell 
 for an $ \varepsilon_{0} $-tessellation of $ \mathbb{R}^{n} $ (cfr. \cite[Def. 1]{DiFFio20-MR4093787}).
 Then, $ \mathcal{R}\tonde*{ D } $ is also a reference cell 
 for an $ \varepsilon_{0} $-tessellation of $ \mathbb{R}^{n} $ for every $ \mathcal{R} \in \mathrm{SO}\tonde*{n} $.
 Therefore,
 there exists a family $ \graffe*{ \mathcal{G}_{\varepsilon}}_{0< \varepsilon <1} $
 satisfying \eqref{qwe1} and \eqref{qwe2} in \cref{res:formula_psi}
 (cfr. \cite[Def. 2 and Prop. 1]{DiFFio20-MR4093787}).
\end{remark}

\begin{proof}[Proof of \cref{res:formula_psi}]

Let $ A \in \mathbb{R}^{m\times n} $.
Recalling \cref{res:prop_psi_alphap},
 \begin{equation}\label{32oiuy}
 \psi_{\alpha_{p}}\tonde*{ A } 
 =
 G^{ \alpha_{ p}}\tonde*{ l^{ A} , Q } 
 =
 \lim\limits_{\varepsilon\to 0^{+}} G^{\alpha_{p}}_{ \varepsilon }\tonde*{ l^{ A} , Q } 
 =
 \lim\limits_{\varepsilon\to 0^{+}} \varepsilon ^{ n - p } \sup\limits_{\mathcal{G}_{\varepsilon}} \sum_{ D'\in\mathcal{G}_{\varepsilon}} \alpha_{ p}\tonde*{ l^{ A}, D' } ,
 \end{equation}
where for every $ \varepsilon >0$, each $ \mathcal{G}_{\varepsilon} $ is a family made of pairwise disjoint elements of $ \mathcal{G}^{ D}_{\varepsilon}\tonde*{ Q} $ 
and the supremum is taken with respect to all of these families $ \mathcal{G}_{\varepsilon} $.
We recall that the cardinality of every family $ \mathcal{G}_{\varepsilon} $ is bounded from above by $\tonde*{ \varepsilon ^ n \abs*{ D }}^{-1}$.
Let $ A \in \mathbb{R}^{m\times n} $ and $ B , C \in A \mathcal{G} $.
Then, $ B = C O $ with $ O \in \mathcal{G} $.
By \eqref{alpha_p:change_of_variable} we obtain,
\begin{equation*}
 \alpha_{ p}\tonde*{ l^{ B}, D' } = \alpha_{ p}\tonde*{ l^{ C}, O D' } ,
\end{equation*}
for every $ D' \in \mathcal{G}^{ D}_{\varepsilon}\tonde*{\mathbb{R}^{n}} $.
In particular,
\begin{equation*}
 \psi_{\alpha_{p}}\tonde*{ B } 
 =
 \lim\limits_{\varepsilon\to 0^{+}} \varepsilon ^{ n - p } \sup\limits_{\mathcal{G}_{\varepsilon}} \sum_{ D'\in\mathcal{G}_{\varepsilon}} \alpha_{ p}\tonde*{ l^{ C}, O D' } 
 \leq
 G^{ \alpha_{ p}}\tonde*{ l^{ C} , O Q } = \psi_{\alpha_{p}}\tonde*{ C } ,
\end{equation*}
where we used \eqref{eq:psi_ind_cubo} in \cref{res:prop_psi_alphap}, with $ \tilde{Q} = O Q $, and the fact that
for every $ \varepsilon >0$,
if $ \mathcal{G}_{\varepsilon} $ is a family made of pairwise disjoint elements of $ \mathcal{G}^{ D}_{\varepsilon}\tonde*{ Q} $,
then the family $\set*{ O D' : D' \in \mathcal{G}}$
is made of pairwise disjoint elements
of $ \mathcal{G}^{ D }_{ \varepsilon }\tonde*{ O Q } $.
Interchanging the role of $ B $ and $ C $
we obtain that $ \psi_{\alpha_{p}} $ is constant on $ A \mathcal{G} $.

We now prove \eqref{eq:char_psip_Gsub}.
Let $ \varepsilon >0$ and $ D' \in \mathcal{G}^{ D}_{\varepsilon}\tonde*{\mathbb{R}^{n}} $.
Then, $ D' = \varepsilon \mathcal{R}_{D'} D + h_{D'} $ for some $ \mathcal{R}_{D'} \in \mathcal{G} $ and $ h_{D'} \in \mathbb{R}^{n} $.
By \eqref{eq:loclip_key2}, we get that
\begin{equation}\label{f9843wh}
 \alpha_{ p}\tonde*{ l^{ A}, D' } = \varepsilon ^ p \alpha_{ p}\tonde*{ l^{ A \mathcal{R}_{D'}}, D } .
\end{equation}
Then,
\begin{equation*}
 \varepsilon ^{ n - p } \sup\limits_{\mathcal{G}_{\varepsilon}} \sum_{ D'\in\mathcal{G}_{\varepsilon}} \alpha_{ p}\tonde*{ l^{ A}, D' } 
 =
 \varepsilon ^{ n } \sup\limits_{\mathcal{G}_{\varepsilon}} \sum_{ D'\in\mathcal{G}_{\varepsilon}} \alpha_{ p}\tonde*{ l^{ A \mathcal{R}_{D'}}, D } 
 \leq
 \varepsilon ^{ n }
 \sup\limits_{\mathcal{G}_{\varepsilon}}\cardinality[ \mathcal{G}_{\varepsilon}] 
 \max\limits_{ B \in A \mathcal{G}} \alpha_{ p}\tonde*{ l^{ B}, D } .
\end{equation*}
Therefore, from \eqref{32oiuy} we get,
\begin{equation}\label{dafd4}
 \psi_{\alpha_{p}}\tonde*{ A } 
 =
 \lim\limits_{\varepsilon\to 0^{+}} \varepsilon ^{ n } \sup\limits_{\mathcal{G}_{\varepsilon}} \sum_{ D'\in\mathcal{G}_{\varepsilon}} \alpha_{ p}\tonde*{ l^{ A \mathcal{R}_{D'}}, D } 
 \leq
 \frac{1}{ \abs*{ D }}
 \max\limits_{ B \in A \mathcal{G}} \alpha_{ p}\tonde*{ l^{ B}, D } .
\end{equation}
Then, \eqref{eq:char_psip_Gsub} holds.

Finally, let $ \mathcal{R} \in \mathcal{G} $ and consider a family $ \graffe*{ \mathcal{G}_{\varepsilon}}_{0< \varepsilon <1} $ satisfying \eqref{qwe1} and \eqref{qwe2}.
Then, $ \lim\limits_{\varepsilon\to 0^{+}} \varepsilon ^{ n }\cardinality[ \mathcal{G}_{\varepsilon}]=\frac{1}{ \abs*{ D }}$,
and consequently from \eqref{32oiuy} and \eqref{f9843wh} we obtain,
\begin{equation*}
 \psi_{\alpha_{p}}\tonde*{ A } 
 \geq
 \lim\limits_{\varepsilon\to 0^{+}} \varepsilon ^{ n } \sum_{ D'\in\mathcal{G}_{\varepsilon}} \alpha_{ p}\tonde*{ l^{ A \mathcal{R}}, D } 
 =
 \lim\limits_{\varepsilon\to 0^{+}} \varepsilon ^{ n }\cardinality[ \mathcal{G}_{\varepsilon}] \alpha_{ p}\tonde*{ l^{ A \mathcal{R}}, D } 
 =
 \frac{1}{ \abs*{ D }}
 \alpha_{ p}\tonde*{ l^{ A \mathcal{R}}, D } .
\end{equation*}
Taking the maximum with respect to $ \mathcal{R} \in \mathcal{G} $, we obtain,
\begin{equation}\label{j980}
 \psi_{\alpha_{p}}\tonde*{ A } 
 \geq
 \frac{1}{ \abs*{ D }}
 \max\limits_{ B \in A \mathcal{G}} \alpha_{ p}\tonde*{ l^{ B}, D } .
\end{equation}
Combining \eqref{j980} and \eqref{dafd4} we conclude the proof.
\end{proof}

\subsection{Characterization of \texorpdfstring{$W^{1,p}(\Omega;\mathbb{R}^m)$}{W1p(Omega;Rm)}}
\phantom{}

In this section we will consider examples of $p$-core functionals used to characterize the space $W^{1,p}(\Omega;\mathbb{R}^m)$ of (vector-valued) Sobolev functions when $p\in(1,\infty)$ and to approximate the following $L^p$ norm of the gradient of functions in $W^{1,p}(\Omega;\mathbb{R}^m)$ for $p\in[1,\infty)$:
\begin{equation*}
    \int_\Omega |\nabla u|^p_{\alpha_p}\mathrm{d} x,
\end{equation*}
where $|.|_{\alpha_p}\coloneqq\psi_{\alpha_p}^{1/p}(.)$
is a (possibly anisotropic) norm in $\mathbb{R}^{m\times n}$ (cfr. \cref{res:W1p_char}).

In this section $ m \geq1$ will always denote a natural number and $ D \subseteq \mathbb{R}^{n} $ a bounded connected open set with Lipschitz boundary since we will need the classical \emph{Poincaré} inequality.

This first example is a generalization of the 
 BMO-type seminorms considered in the literature for the
 scalar case (see \cite{FusMosSbo16-MR3419767,FusMosSbo18-MR3816417,FarFusGuaSch20-MR4062330,FarGuaSch20-MR4109099,DiFFio20-MR4093787}).
\begin{example}[][ex:fun_meno_media]
Let $ p \in \left[1,\infty\right) $.
 We define,
 \begin{equation*}
 \alpha_{ p}\tonde*{ u , D' } 
 \coloneqq
 \strokedint_{ D'} \abs*{ u\tonde*{ x } - u_{ D'}}^{ p }_{2} \thinspace\mathrm{d} x .
 \end{equation*}
 Then, $ \alpha_{ p} : L^{p}_{\mathrm{loc}}\tonde*{\mathbb{R}^{n};\mathbb{R}^{m}} \times \mathcal{E}^{ D}_{ \Gamma }\tonde*{\mathbb{R}^{n}} \to \left[0,+\infty\right) $ is a $p$-core functional for any $ \Gamma $ subgroup of $ \mathrm{Aff}\tonde*{\mathbb{R}^{n}} $.
 Moreover, by Fatou's lemma, $\restr{ \alpha_{ p}\tonde*{\cdot, D' }}{ L^{p}\tonde*{\mathbb{R}^{n};\mathbb{R}^{m}}}: L^{p}\tonde*{\mathbb{R}^{n};\mathbb{R}^{m}} \to \left[0,+\infty\right) $
 is lower semicontinuous for every $ D' \in \mathcal{E}^{ D}_{ \Gamma }\tonde*{\mathbb{R}^{n}} $.
\end{example}

\begin{proposition}[][res:fun_meno_media] 
 Let $ p \in \left[1,\infty\right) $ and
 $ \mathcal{G} $ be a subgroup of $ \mathrm{SO}\tonde*{n} $.
 Let $ \alpha_{ p} = \alpha_{ p , D, \Gamma ^{\mathcal{G}} ,m } $ 
 be the $p$-core functional in \cref{ex:fun_meno_media}
 subordinated to $ \Gamma ^{\mathcal{G}} $ as in \cref{def:Gamma_G}.
 
 Then, $ \alpha_{ p} $ is a strong $p$-core functional as in \cref{def:strong_core_function} and,
 \begin{equation}\label{9832h9hh}
 \mathcal{N}_{\psi_{\alpha_{p}}} =\set*{0},
 \end{equation}
 \begin{align}\label{daf98sh}
 & \alpha_{ p}\tonde*{ l^{ \mathcal{R} A }, D' } 
 =
 \alpha_{ p}\tonde*{ l^{ A }, D' } ,
 & \psi_{\alpha_{p}}\tonde*{ \mathcal{R} A } = \psi_{\alpha_{p}}\tonde*{ A } 
 ,
 \end{align}
 for every $ A \in \mathbb{R}^{m\times n} $ and $ \mathcal{R} \in \mathrm{O}\tonde*{m} $.

 Moreover, recalling \cref{def:psi_alphap}, for every $ A \in \mathbb{R}^{m\times n} $,
 \begin{enumerate}
 \item\label{item:aniso_fun_meno_media} if $ \mathcal{G} =\set*{ \operatorname{id}_{\mathbb{R}^{n}}}$ and $ D $ is as in \cref{res:tessellation_useful},
 \begin{equation*}
 \psi_{\alpha_{p}}\tonde*{ A } 
 =
 \frac{1}{ \abs*{ D } ^2} \norm*{ \abs*{ l^{ A}}_{2}}^{ p }_{L^{p}\tonde*{D-\operatorname{bar}\tonde*{ D}}} 
 \text{ }\forall A\in\mathbb{R}^{m\times n}
 .
 \end{equation*}
 \item\label{item:iso_fun_meno_media} if $ \mathcal{G} = \mathrm{SO}\tonde*{n} $ and $ m = n $,
 \begin{equation*}
 \psi_{ \alpha_{ p , D, \Gamma ^{\mathcal{G}} ,n }}\tonde*{ A } 
 =
 \psi_{ \alpha_{ p , D, \Gamma ^{\mathcal{G}} ,1 }}\tonde*{ \lambda } ,
 \end{equation*}
 where $ \lambda \in \mathbb{R}^{n} $ is a vector obtained collecting (in an arbitrary order) the $ n $ eigenvalues
 (counted with multiplicity) of the symmetric and positive definite
 matrix $\sqrt{ A A ^{T}}$.
 \end{enumerate}
\end{proposition}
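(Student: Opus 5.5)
The plan is to work directly from \cref{res:formula_psi} and the explicit form of $\alpha_p$ on linear maps. For $D'$ any open set, $(l^A)_{D'}=A\operatorname{bar}(D')$, so
\[
\alpha_p(l^A,D')=\strokedint_{D'}\abs*{A(x-\operatorname{bar}(D'))}_2^p\thinspace\mathrm{d}x .
\]
Since $\abs*{\mathcal{R}v}_2=\abs*{v}_2$ for $\mathcal{R}\in\mathrm{O}(m)$, this immediately gives the first identity in \eqref{daf98sh}. The second identity, $\psi_{\alpha_p}(\mathcal{R}A)=\psi_{\alpha_p}(A)$, then follows from \cref{def:psi_alphap}, since $G^{\alpha_p}_\varepsilon(l^{\mathcal{R}A},Q)=G^{\alpha_p}_\varepsilon(l^A,Q)$ term by term.

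For the strong $p$-core property, \cref{res:obs_strong_p_core} reduces the claim to \eqref{9832h9hh}. To show $\mathcal{N}_{\psi_{\alpha_p}}=\{0\}$, I will not rely on a tessellation, since $D$ is general. Instead I will bound $\psi_{\alpha_p}$ from below with an explicit disjoint family in $Q$: take translates $\varepsilon D+h$ with $h$ on a lattice $\varepsilon L\mathbb{Z}^n$, where $L\ge\operatorname{diam}(D)$, keeping those contained in $Q$. This family has cardinality at least $c\,\varepsilon^{-n}$ with $c>0$ depending only on $L$ for small $\varepsilon$, and each member contributes $\varepsilon^p\alpha_p(l^A,D)$ by \eqref{eq:loclip_key2} (here $\mathcal{R}=\operatorname{id}\in\mathcal{G}$). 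Hence
\[
\psi_{\alpha_p}(A)\ge c\,\alpha_p(l^A,D),
\]
and $\alpha_p(l^A,D)=\strokedint_D\abs*{A(x-\operatorname{bar}D)}_2^p\,\mathrm{d}x>0$ whenever $A\neq0$. Indeed, the kernel of $A$ is a proper subspace, so its translate is Lebesgue-null while $D$ is open.

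For item \eqref{item:aniso_fun_meno_media}, $\mathcal{G}=\{\operatorname{id}\}$ makes $A\mathcal{G}=\{A\}$. \cref{res:tessellation_useful} guarantees families satisfying the hypotheses of \cref{res:formula_psi}, so equality holds in \eqref{eq:char_psip_Gsub}:
\[
\psi_{\alpha_p}(A)=\frac{1}{\abs*{D}}\alpha_p(l^A,D)=\frac{1}{\abs*{D}^2}\int_D\abs*{A(x-\operatorname{bar}D)}_2^p\,\mathrm{d}x .
\]
The change of variables $y=x-\operatorname{bar}(D)$ gives the stated norm on $D-\operatorname{bar}(D)$.

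Item \eqref{item:iso_fun_meno_media} uses the polar decomposition $A=\sqrt{AA^T}\,O$ with $O\in\mathrm{O}(n)$, and then the spectral decomposition $\sqrt{AA^T}=V\Lambda V^T$ with $V\in\mathrm{O}(n)$, $\Lambda=\operatorname{diag}(\lambda)$. The left orthogonal factors are removed by \eqref{daf98sh}. The right factors are removed by \cref{res:formula_psi}, since $\psi_{\alpha_p}$ is constant on $A\,\mathrm{SO}(n)$. A possible reflection is handled by composing with $\operatorname{diag}(-1,1,\dots,1)$ on the left, which is allowed by \eqref{daf98sh}: a right reflection $\Lambda S$ equals $S\Lambda$ for diagonal $S$. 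This reduces to $\psi_{\alpha_p}(\Lambda)$ with $\Lambda$ diagonal, and permutations are likewise absorbed, so the order of the eigenvalues is irrelevant. It remains to compare with the scalar case $m=1$ and the row vector $\lambda$.

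The comparison with the scalar case is the step I expect to be the main obstacle. The value of the vector functional on $l^\Lambda$ is $\strokedint_{D'}\big(\sum_i\lambda_i^2(x_i-b_i)^2\big)^{p/2}\,\mathrm{d}x$. For $m=1$ and vector $\lambda$, one gets $\strokedint_{D'}\abs{\lambda\cdot(x-b)}^p\,\mathrm{d}x$, which is not the same integrand. So I expect the intended reading to identify the two via the rotational freedom of $\mathcal{G}=\mathrm{SO}(n)$: I would try to average over rotations and use \cref{res:formula_psi} to express both sides as maxima over rotations of $D$. If the integrands genuinely differ, the identification may require an additional convention, such as interpreting the scalar $\psi$ at $\lambda$ through $\abs{\cdot}_2$ of the diagonal map. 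I would check this carefully before committing.
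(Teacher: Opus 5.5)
Up to the reduction $\psi_{\alpha_p}(A)=\psi_{\alpha_p}(\Lambda)$ your argument is correct and follows the paper. You get \eqref{daf98sh} from $\abs{\mathcal{R}z}_2=\abs{z}_2$, the strong $p$-core property from $\mathcal{N}_{\psi_{\alpha_p}}=\{0\}$ and \cref{res:obs_strong_p_core}, and item (1) from equality in \cref{res:formula_psi}. Your polar/spectral reduction also matches; the diagonal reflection plays the role of the paper's choice of $U$ with $UO\in\mathrm{SO}(n)$. For $\mathcal{N}_{\psi_{\alpha_p}}=\{0\}$ the paper uses a single cell and the bound $\psi_{\alpha_p}(A)\geq G^{\alpha_p}_{1/2}(l^A,Q)$ from \cref{res:well-def_psi}. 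Your lattice packing is an equally valid variant that uses only the limit, and it does not need $\mathcal{G}^D_{1/2}(Q)\neq\emptyset$.

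The step you flagged is a genuine gap, and it cannot be closed, because item (2) is false as stated. The paper gets past it by asserting $\abs{\Lambda x}_2=\abs{\lambda\cdot x}$ for all $x$ (its \eqref{diag123}). This already fails for $\Lambda=I_2$, $x=(1,-1)$, and no averaging over $\mathrm{SO}(n)$ repairs it. For a concrete counterexample, let $n=m=2$, $p=2$, $\mathcal{G}=\mathrm{SO}(2)$ and $D=(0,2)\times(0,1)$. This $D$ is a tessellation cell, so equality holds in \eqref{eq:char_psip_Gsub} for both $m=2$ and $m=1$. Take $A=I_2$, so $\lambda=(1,1)$, and set $M\coloneqq\strokedint_D(x-\operatorname{bar}(D))(x-\operatorname{bar}(D))^{T}\,\mathrm{d}x=\operatorname{diag}(1/3,1/12)$. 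Then
\[
\psi_{\alpha_{2,D,\Gamma^{\mathcal{G}},2}}(I_2)=\frac{\operatorname{tr}M}{\abs{D}}=\frac{5}{24},
\qquad
\psi_{\alpha_{2,D,\Gamma^{\mathcal{G}},1}}((1,1))=\frac{1}{\abs{D}}\max_{\abs{\nu}_2=\sqrt{2}}\nu\cdot M\nu=\frac{1}{3}.
\]
Isotropy of $D$ does not help either. For $p=1$, $D=Q$ and the same $A$, the left-hand side is $\int_Q\abs{x}_2\,\mathrm{d}x=\frac{\sqrt2+\ln(1+\sqrt2)}{6}\approx0.383$. The right-hand side is $\sqrt2\max_{\abs{\nu}_2=1}\int_Q\abs{x\cdot\nu}\,\mathrm{d}x=\frac{\sqrt2}{4}\approx0.354$.

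What your argument does establish is your fallback reading, $\psi_{\alpha_{p,D,\Gamma^{\mathcal{G}},n}}(A)=\psi_{\alpha_{p,D,\Gamma^{\mathcal{G}},n}}(\operatorname{diag}(\lambda))$; that is, $\psi_{\alpha_p}$ depends only on the singular values of $A$. The identification with the scalar functional at $\lambda$ holds only in special cases, for example $p=2$ with $M=\mu I$ (such as $D=Q$). Then $\alpha_2(l^B,D)=\mu\sum_{i,j}B_{ij}^2$, and $\sum_{i,j}(AR)_{ij}^2=\sum_i\lambda_i^2=\sum_j(\lambda R)_j^2$ for every $R\in\mathrm{SO}(n)$, so the two BMO sums agree term by term. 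Item (2) should be weakened accordingly rather than proved.
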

\begin{remark}
 Thanks to \cref{res:formula_psi}, the function $ \mathbb{R}^{n} \ni\nu\to \psi_{ \alpha_{ p , D, \Gamma ^{\mathcal{G}} ,1 }}\tonde*{\nu} $ in \cref{res:fun_meno_media}
 is constant on the unit sphere $\mathbb{S}^{ n -1}$
 (therefore it does not depend on permutations of the components of $\nu$).
 Moreover, $ \psi_{ \alpha_{ p , D, \Gamma ^{\mathcal{G}} ,1 }} $
 coincides with the function $\tilde{\psi}^{ D }_{ p }$ in \cite[Sec. 4]{FarGuaSch20-MR4109099}.
 In particular (recalling also \cref{res:formula_psi}) it coincides with the function $ \mathbb{R}^{n} \ni\nu\to \abs*{\nu}^{ p }_{2} $
 up to a multiplicative constant,
 which can be explicitly computed
 for some $ D \subset \mathbb{R}^{n} $ and some values of $ n $ and $ p $ \cite[cfr. Rem. 2.1]{FusMosSbo18-MR3816417}.
 For instance, if $ D $ is equal to the cube $ Q $, then for every $\nu\in \mathbb{R}^{n} $
 $\tilde{\psi}^{ Q }_{ p }(\nu)=\gamma( n , p ) \abs*{\nu}^{ p }_{2} $, where
 \begin{equation*}
 \gamma( n , p )= \max\limits_{\nu\in\mathbb{S}^{ n -1}} \int_{ Q } \abs*{ x \cdot \nu}^ p \thinspace\mathrm{d} x .
 \end{equation*}
 We also recall that $\gamma( n ,1)=\frac{1}{4}$ and $\gamma( n ,2)=\frac{1}{12}$ for every $ n \geq1$.

 More generally, if $ D $ is as in \cref{res:tessellation_useful},
 thanks to \cref{res:formula_psi} we get that,
 \begin{equation*}
 \tilde{\psi}^{ D }_{ p }(\mu)
 =
 \abs*{\mu}^{ p }_{2} \frac{1}{ \abs*{ D } ^2} \max\limits_{\nu\in\mathbb{S}^{ n -1}} \int_{ D } \abs*{\tonde*{ x - \operatorname{bar}\tonde*{ D }} \cdot \nu}^ p \thinspace\mathrm{d} x ,
 \end{equation*}
 for every $\mu\in \mathbb{R}^{n} $.
\end{remark}

\begin{proof}[Proof of \cref{res:fun_meno_media}]
\eqref{daf98sh} follows by the fact that $ \abs*{ \mathcal{R} z }_{2} = \abs*{ z }_{2} $ for every $ z \in \mathbb{R}^{m} $.

We now observe that for every $ D' \in \mathcal{G}^{ D}\tonde*{\mathbb{R}^{n}} $,
\begin{equation}\label{1d7h97}
 \begin{split}
 & \alpha_{ p}\tonde*{ l^{ A}, D' } 
 =
 \strokedint_{ D'} \abs*{ l^{ A}\tonde*{ x } - \strokedint_{ D' } l^{ A}\tonde*{ y } \thinspace\mathrm{d} y }^{ p }_{2} \thinspace\mathrm{d} x 
 \\
 &=
 \strokedint_{ D'} \abs*{ A \tonde*{ x - \strokedint_{ D' } y \thinspace\mathrm{d} y }}^{ p }_{2} \thinspace\mathrm{d} x 
 =
 \strokedint_{ D'} \abs*{ A \tonde*{ x - \operatorname{bar}\tonde*{ D' }}}^{ p }_{2} \thinspace\mathrm{d} x .
 \end{split}
\end{equation}

We prove \eqref{9832h9hh}.
Let $ A \in \mathcal{N}_{\psi_{\alpha_{p}}} $. Thanks to \cref{res:well-def_psi} and \eqref{1d7h97}, we get
\begin{equation*}
 0= \psi_{\alpha_{p}}\tonde*{ A } \geq G^{\alpha_{p}}_{1/2}\tonde*{ l^{ A} , Q } \geq2^{ p - n } \strokedint_{ D'} \abs*{ A \tonde*{ x - \operatorname{bar}\tonde*{ D' }}}^{ p }_{2} \thinspace\mathrm{d} x ,
\end{equation*}
where $ D' $ is an arbitrary set in $ \mathcal{G}^{ D }_{1/2}\tonde*{ Q } $.
This implies that the affine function $ A \tonde*{\cdot- \operatorname{bar}\tonde*{ D' }}$ is identically equal to $0$
in $ D' $ (which has positive measure). Then $ A =0$ and \eqref{9832h9hh} follows.
Then, recalling \cref{res:obs_strong_p_core}, we deduce that $ \alpha_{ p} $
is a strong $p$-core functional.

We prove \eqref{item:aniso_fun_meno_media}.
By \cref{res:formula_psi} and \eqref{1d7h97} we obtain,
\begin{align*}
 \psi_{\alpha_{p}}\tonde*{ A } 
 &=
 \frac{1}{ \abs*{ D }} \strokedint_{ D} \abs*{ A \tonde*{ x - \operatorname{bar}\tonde*{ D }}}^{ p }_{2} \thinspace\mathrm{d} x 
 =
 \frac{1}{ \abs*{ D } ^2} \norm*{ \abs*{ l^{ A}}_{2}}^{ p }_{L^{p}\tonde*{D-\operatorname{bar}\tonde*{ D}}} .
\end{align*}
We now prove \eqref{item:iso_fun_meno_media}.

Let $ \alpha_{ p} = \alpha_{ p , D, \Gamma ^{\mathcal{G}} ,n } $ and $ A \in \mathbb{R}^{n\times n} $.
We now consider a (left) polar decomposition of $ A $:
\begin{equation*}
 A =\sqrt{ A A ^{T}} O ,
\end{equation*}
where $ O \in \mathrm{O}\tonde*{n} $.
Now, let $ U \in \mathrm{O}\tonde*{n} $ such that $ U O \in \mathrm{SO}\tonde*{n} $
and
\begin{equation*}
 \sqrt{ A A ^{T}}= U ^{T} \Lambda U ,
\end{equation*}
where $ \Lambda $ is a diagonal matrix.
Then, by \eqref{daf98sh} and \cref{res:formula_psi},
\begin{equation}\label{1h98eah}
 \psi_{\alpha_{p}}\tonde*{ A } 
 =
 \psi_{\alpha_{p}}\tonde*{ U ^{T} \Lambda U O } 
 =
 \psi_{\alpha_{p}}\tonde*{ \Lambda } .
\end{equation}
Let $ \lambda \in \mathbb{R}^{n} $ be the vector obtained from the diagonal of $ \Lambda $,
so that,
\begin{equation}\label{diag123}
 \abs*{ \Lambda x }_{2} =\abs*{ \lambda \cdot x },
\end{equation}
for every $ x \in \mathbb{R}^{n} $.
Then, recalling \cref{res:prop_psi_alphap}, by \eqref{diag123} and \eqref{1d7h97},
 \begin{equation}\label{s808j320hr}
 \begin{split}
 \psi_{\alpha_{p}}\tonde*{ \Lambda } 
 &=
 \lim\limits_{\varepsilon\to 0^{+}} \varepsilon ^{ n - p } \sup\limits_{\mathcal{G}_{\varepsilon}} \strokedint_{ D'} \abs*{ \Lambda \tonde*{ x - \strokedint_{ D' } y \thinspace\mathrm{d} y }}^{ p }_{2} \thinspace\mathrm{d} x \\
 &=
 \lim\limits_{\varepsilon\to 0^{+}} \varepsilon ^{ n - p } \sup\limits_{\mathcal{G}_{\varepsilon}} \strokedint_{ D'} \abs*{\tonde*{ \lambda \cdot x - \strokedint_{ D' } \lambda \cdot y \thinspace\mathrm{d} y }}^ p \thinspace\mathrm{d} x 
 = \psi_{ \alpha_{ p , D, \Gamma ^{\mathcal{G}} ,1 }}\tonde*{ \lambda } ,
 \end{split}
 \end{equation}
where for every $ \varepsilon >0$, each $ \mathcal{G}_{\varepsilon} $ is a family made of pairwise disjoint elements of $ \mathcal{G}^{ D}_{\varepsilon}\tonde*{ Q} $ 
and the supremum is taken with respect to all of these families $ \mathcal{G}_{\varepsilon} $.
Combining \eqref{s808j320hr} and \eqref{1h98eah} we conclude the proof.
\end{proof}

The following example is similar to \cref{ex:fun_meno_media} (actually when $p=2$ it coincides with \cref{ex:fun_meno_media}).
\begin{example}[][ex:fun_meno_c_inf]

 Let $ p \in \left[1,\infty\right) $.
 We define,
\begin{equation*}
 \alpha_{ p}\tonde*{ u , D' } 
 \coloneqq
 \inf\limits_{ c \in \mathbb{R}^{m}} \strokedint_{ D'} \abs*{ u\tonde*{ x } - c }^{ p }_{2} \thinspace\mathrm{d} x .
 \end{equation*}
 Then, $ \alpha_{ p} : L^{p}_{\mathrm{loc}}\tonde*{\mathbb{R}^{n};\mathbb{R}^{m}} \times \mathcal{E}^{ D}_{ \Gamma }\tonde*{\mathbb{R}^{n}} \to \left[0,+\infty\right) $ is a $p$-core functional for any $ \Gamma $ subgroup of $ \mathrm{Aff}\tonde*{\mathbb{R}^{n}} $.
 Moreover, by Fatou's lemma, $\restr{ \alpha_{ p}\tonde*{\cdot, D' }}{ L^{p}\tonde*{\mathbb{R}^{n};\mathbb{R}^{m}}}: L^{p}\tonde*{\mathbb{R}^{n};\mathbb{R}^{m}} \to \left[0,+\infty\right) $
 is lower semicontinuous for every $ D' \in \mathcal{E}^{ D}_{ \Gamma }\tonde*{\mathbb{R}^{n}} $.
\end{example}

The next example is inspired by \cite{PonSpe17-MR3614653}.

\begin{example}[][ex:integ_doppio]
 Let $ p \in \left[1,\infty\right) $.
 We define,
 \begin{equation*}
 \alpha_{ p}\tonde*{ u , D' } 
 \coloneqq
 \strokedint_{ D'} \strokedint_{ D'} \abs*{ u\tonde*{ x } - u\tonde*{ y }}^{ p }_{2} \thinspace\mathrm{d} x \mathrm{d} y .
 \end{equation*}
 Then, $ \alpha_{ p} : L^{p}_{\mathrm{loc}}\tonde*{\mathbb{R}^{n};\mathbb{R}^{m}} \times \mathcal{E}^{ D}_{ \Gamma }\tonde*{\mathbb{R}^{n}} \to \left[0,+\infty\right) $ is a $p$-core functional for any $ \Gamma $ subgroup of $ \mathrm{Aff}\tonde*{\mathbb{R}^{n}} $.
 Moreover, by Fatou's lemma, $\restr{ \alpha_{ p}\tonde*{\cdot, D' }}{ L^{p}\tonde*{\mathbb{R}^{n};\mathbb{R}^{m}}}: L^{p}\tonde*{\mathbb{R}^{n};\mathbb{R}^{m}} \to \left[0,+\infty\right) $
 is lower semicontinuous for every $ D' \in \mathcal{E}^{ D}_{ \Gamma }\tonde*{\mathbb{R}^{n}} $.
\end{example}

\begin{proposition}[][res:int_doppio] 
 Let $ p \in \left[1,\infty\right) $ and
 $ \mathcal{G} $ be a subgroup of $ \mathrm{SO}\tonde*{n} $.
 Let $ \alpha_{ p} = \alpha_{ p , D, \Gamma ^{\mathcal{G}} ,m } $ 
 be the $p$-core functional in \cref{ex:integ_doppio}
 subordinated to $ \Gamma ^{\mathcal{G}} $ as in \cref{def:Gamma_G}.

 Then, $ \alpha_{ p} $ is a strong $p$-core functional,
 \begin{equation}\label{21j03h}
 \mathcal{N}_{\psi_{\alpha_{p}}} =\set*{0},
 \end{equation}
 \begin{align}\label{fwh97}
 & \alpha_{ p}\tonde*{ l^{ \mathcal{R} A }, D' } 
 =
 \alpha_{ p}\tonde*{ l^{ A }, D' } ,
 & \psi_{\alpha_{p}}\tonde*{ \mathcal{R} A } = \psi_{\alpha_{p}}\tonde*{ A } 
 ,
 \end{align}
 for every $ A \in \mathbb{R}^{m\times n} $ and $ \mathcal{R} \in \mathrm{O}\tonde*{m} $.

 Moreover, recalling \cref{def:psi_alphap}, for every $ A \in \mathbb{R}^{m\times n} $,
 \begin{enumerate}
 \item\label{item:aniso_int_doppio} if $ \mathcal{G} =\set*{ \operatorname{id}_{\mathbb{R}^{n}}}$ and $ D $ is as in \cref{res:tessellation_useful},
 \begin{equation*}
 \psi_{\alpha_{p}}\tonde*{ A } 
 =
 \frac{1}{ \abs*{ D }} \strokedint_{ D} \strokedint_{ D} \abs*{ A \tonde*{ x - y }}^{ p }_{2} \thinspace\mathrm{d} x \mathrm{d} y 
 .
 \end{equation*}
 \item\label{item:iso_int_doppio} if $ \mathcal{G} = \mathrm{SO}\tonde*{n} $ and $ m = n $,
 \begin{equation*}
 \psi_{ \alpha_{ p , D, \Gamma ^{\mathcal{G}} ,n }}\tonde*{ A } 
 =
 \psi_{ \alpha_{ p , D, \Gamma ^{\mathcal{G}} ,1 }}\tonde*{ \lambda } ,
 \end{equation*}
 where $ \lambda \in \mathbb{R}^{n} $ is a vector obtained collecting (in an arbitrary order) the $ n $ eigenvalues
 (counted with multiplicity) of the symmetric and positive definite
 matrix $\sqrt{ A A ^{T}}$.
 \end{enumerate}
\end{proposition}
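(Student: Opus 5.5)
The argument runs parallel to the proof of \cref{res:fun_meno_media}. The first step is to compute $\alpha_p$ on linear maps. For every $D'\in\mathcal{G}^{D}(\mathbb{R}^n)$ and $A\in\mathbb{R}^{m\times n}$ we have $l^A(x)-l^A(y)=A(x-y)$, so
\begin{equation*}
\alpha_p(l^A,D')=\strokedint_{D'}\strokedint_{D'}\abs*{A(x-y)}_2^p\,\mathrm{d}x\,\mathrm{d}y .
\end{equation*}
Since $\abs*{\mathcal{R}z}_2=\abs*{z}_2$ for $\mathcal{R}\in\mathrm{O}(m)$, the left identity in \eqref{fwh97} follows at once. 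The right identity follows by passing this equality through the definition of $G^{\alpha_p}_\varepsilon(\cdot,Q)$, family by family, and then letting $\varepsilon\to0^+$.

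For \eqref{21j03h}, take $A\in\mathcal{N}_{\psi_{\alpha_p}}$. By \cref{res:well-def_psi} we have $0=\psi_{\alpha_p}(A)\geq G^{\alpha_p}_{1/2}(l^A,Q)\geq 2^{p-n}\alpha_p(l^A,D')$ for some fixed $D'\in\mathcal{G}^D_{1/2}(Q)$. Hence $A(x-y)=0$ for a.e. $(x,y)\in D'\times D'$. Because $D'-D'$ contains a neighbourhood of the origin, this forces $A=0$. Then \cref{res:obs_strong_p_core} shows that $\alpha_p$ is a strong $p$-core functional; this uses that $\alpha_p$ is a $p$-core functional, which is asserted in \cref{ex:integ_doppio}.

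If that assertion needs justification, (GB) is the only nontrivial axiom. It follows from $\abs{u(x)-u(y)}_2^p\leq 2^{p-1}(\abs{u(x)-u_D}_2^p+\abs{u(y)-u_D}_2^p)$ together with the Poincaré inequality on the Lipschitz connected set $D$.

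For item \eqref{item:aniso_int_doppio}, take $\mathcal{G}=\{\operatorname{id}\}$. By \cref{res:tessellation_useful} the hypotheses of the equality case in \cref{res:formula_psi} hold, so $\psi_{\alpha_p}(A)=\abs{D}^{-1}\alpha_p(l^A,D)$, and the formula above gives the claim.

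For item \eqref{item:iso_int_doppio}, take $\mathcal{G}=\mathrm{SO}(n)$ and $m=n$. Write the polar decomposition $A=\sqrt{AA^T}\,O$ with $O\in\mathrm{O}(n)$, and $\sqrt{AA^T}=U^T\Lambda U$ with $\Lambda$ diagonal, choosing $U\in\mathrm{O}(n)$ so that $UO\in\mathrm{SO}(n)$; this is always possible by flipping the sign of a row of $U$. Left invariance under $U^T\in\mathrm{O}(n)$ from \eqref{fwh97}, combined with the constancy of $\psi_{\alpha_p}$ on $\Lambda\mathcal{G}$ from \cref{res:formula_psi}, gives $\psi_{\alpha_p}(A)=\psi_{\alpha_p}(\Lambda)$.

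The main obstacle is the last identification, $\psi_{\alpha_{p,D,\Gamma^{\mathcal{G}},n}}(\Lambda)=\psi_{\alpha_{p,D,\Gamma^{\mathcal{G}},1}}(\lambda)$. In \cref{res:fun_meno_media} this came from the identity $\abs{\Lambda z}_2=\abs{\lambda\cdot z}$. That identity is not literally true for general $z$: in fact $\abs{\Lambda z}_2^2=\sum_i\lambda_i^2z_i^2$. So I would check carefully whether the intended argument mirrors \eqref{diag123}, substituting $z=x-y$ into the double integral and comparing suprema over the same families $\mathcal{G}_\varepsilon$. If the literal identity fails, I would instead try to show that both sides are determined by the same $\mathrm{SO}(n)$-invariant quantity. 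Failing that, I would at least reduce to comparing $\max_{R\in\mathrm{SO}(n)}\strokedint_D\strokedint_D\abs{\Lambda R(x-y)}_2^p$ with its scalar analogue, using the upper bound \eqref{eq:char_psip_Gsub}. This is the step I expect to need the most care; everything else is formal.
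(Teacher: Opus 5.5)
Apart from the final identification in item (2), your proposal is the paper's proof. The paper computes $\alpha_p(l^A,D')=\strokedint_{D'}\strokedint_{D'}\abs{A(x-y)}_2^p\,\mathrm{d}x\,\mathrm{d}y$, gets \eqref{fwh97} from the $\mathrm{O}(m)$-invariance of $\abs{\cdot}_2$, and gets item (1) from \cref{res:formula_psi}. For \eqref{21j03h}, the strong $p$-core property and item (2) it refers back to the proof of \cref{res:fun_meno_media}, which is exactly what you reproduce, including the polar-decomposition reduction $\psi_{\alpha_p}(A)=\psi_{\alpha_p}(\Lambda)$. One small fix: $\mathcal{G}^D_{1/2}(Q)$ can be empty when $D$ is large, so use any $\varepsilon\in(0,1]$ with $\mathcal{G}^D_{\varepsilon}(Q)\neq\emptyset$, as \cref{res:well-def_psi} allows.

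Your suspicion about the last step is correct, and it is decisive. The paper closes item (2) with \eqref{diag123}, i.e. $\abs{\Lambda z}_2=\abs{\lambda\cdot z}$, which is false. Item (2) is in fact false as stated, so none of your fallback routes can prove it.

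Here is a counterexample. Take $n=m=2$, $p=4$, $D=Q$, $\mathcal{G}=\mathrm{SO}(2)$ and $A=I$, so $\lambda=(1,1)$. Rotated copies of $Q$ tile the plane, so by \cref{res:tessellation_useful} the equality case of \cref{res:formula_psi} applies for both $m=2$ and $m=1$. Since $\abs{Q}=1$ and $\abs{\cdot}_2$ is rotation invariant,
\[
\psi_{\alpha_{4,Q,\Gamma^{\mathcal{G}},2}}(I)=\strokedint_Q\strokedint_Q\abs{x-y}_2^4\,\mathrm{d}x\,\mathrm{d}y,
\qquad
\psi_{\alpha_{4,Q,\Gamma^{\mathcal{G}},1}}(\lambda)=4\max_{\nu\in\mathbb{S}^1}\strokedint_Q\strokedint_Q\abs{(x-y)\cdot\nu}^4\,\mathrm{d}x\,\mathrm{d}y .
\]
The differences $x_i-y_i$ are independent with density $1-\abs{t}$ on $(-1,1)$, whose second and fourth moments are $\frac16$ and $\frac1{15}$. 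Hence the first quantity equals $\frac{2}{15}+\frac{2}{36}=\frac{17}{90}$. The second is at least $\frac{4}{15}=\frac{24}{90}$, by taking $\nu=e_1$. The same data also refute item (2) of \cref{res:fun_meno_media}, whose proof you were mirroring: there the values are $\frac{7}{180}$ against at least $\frac{9}{180}$.

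What does survive is the reduction $\psi_{\alpha_p}(A)=\psi_{\alpha_p}(\Lambda)$, so $\psi_{\alpha_p}$ depends only on the singular values of $A$. In the tessellating case you also get $\psi_{\alpha_p}(\Lambda)=\frac{1}{\abs{D}}\max_{R\in\mathrm{SO}(n)}\strokedint_D\strokedint_D\abs{\Lambda R(x-y)}_2^p\,\mathrm{d}x\,\mathrm{d}y$. This agrees with the scalar quantity $\psi_{\alpha_{p,D,\Gamma^{\mathcal{G}},1}}(\lambda)$ only in special situations, such as $p=2$ and $D=Q$, where both sides equal $\frac16\abs{\lambda}_2^2$.
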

\begin{remark}
 Thanks to \cref{res:formula_psi}, the function $ \mathbb{R}^{n} \ni\nu\to \psi_{ \alpha_{ p , D, \Gamma ^{\mathcal{G}} ,1 }}\tonde*{\nu} $ in \cref{res:int_doppio}
 is constant on the unit sphere $\mathbb{S}^{ n -1}$
 (therefore it does not depend on permutations of the components of $\nu$).

 More generally, if $ D $ is as in \cref{res:tessellation_useful},
 thanks to \cref{res:formula_psi} we get that,
 \begin{equation*}
 \psi_{ \alpha_{ p , D, \Gamma ^{\mathcal{G}} ,1 }}\tonde*{\mu} 
 =
 \abs*{\mu}^{ p }_{2} \frac{1}{ \abs*{ D } ^3} \max\limits_{\nu\in\mathbb{S}^{ n -1}} 
 \int_{ D } \int_{ D } \abs*{\tonde*{ x - y } \cdot \nu}^ p \thinspace\mathrm{d} x \mathrm{d} y
 \text{ }\forall A\in\mathbb{R}^{m\times n},
 \end{equation*}
 for every $\mu\in \mathbb{R}^{n} $.
\end{remark}
\begin{proof}[Proof of \cref{res:int_doppio}]
\eqref{fwh97} follows by the fact that $ \abs*{ \mathcal{R} z }_{2} = \abs*{ z }_{2} $ for every $ z \in \mathbb{R}^{m} $.

We now observe that for every $ D' \in \mathcal{G}^{ D}\tonde*{\mathbb{R}^{n}} $,
\begin{equation}\label{9dsajh9y}
 \begin{split}
 & \alpha_{ p}\tonde*{ l^{ A}, D' } 
 =
 \strokedint_{ D'} \strokedint_{ D'} \abs*{ A \tonde*{ x - y }}^{ p }_{2} \thinspace\mathrm{d} x \mathrm{d} y\text{ }\forall A\in\mathbb{R}^{m\times n}.
 \end{split}
\end{equation}
\eqref{item:aniso_fun_meno_media} follows from \cref{res:formula_psi} and \eqref{9dsajh9y}.
The proof of \eqref{21j03h}, \eqref{item:iso_int_doppio} and the fact that
$ \alpha_{ p} $ is a strong $p$-core functional follows the same argument 
used in the proof of \cref{res:fun_meno_media}.
\end{proof}

The following result shows how \cref{ex:fun_meno_media},
\cref{ex:fun_meno_c_inf}
and \cref{ex:integ_doppio} can be used in the study of $W^{1,p}\tonde*{\Omega;\mathbb{R}^{m}}$. 
\begin{corollary}[][res:W1p_char]
 Let $ \Omega \subseteq \mathbb{R}^{n} $ be an open set, $ m \geq1$ a natural number, $ p \in \left[1,\infty\right) $,
 $ \mathcal{G} $ a subgroup of $ \mathrm{SO}\tonde*{n} $ and $ D \subseteq \mathbb{R}^{n} $ be a bounded connected open set with Lipschitz boundary.
 Let $ \alpha_{ p} = \alpha_{ p , D, \Gamma ^{\mathcal{G}} ,m } $ 
 one of the $p$-core functional in \cref{ex:fun_meno_media},
 \cref{ex:fun_meno_c_inf}
 or \cref{ex:integ_doppio}
 subordinated to $ \Gamma ^{\mathcal{G}} $ as in \cref{def:Gamma_G}.

 Then,
 \begin{enumerate}
 \item\label{zxc1} for every $ u \in W^{1,p}_{\mathrm{loc}}\tonde*{\Omega;\mathbb{R}^{m}} $,
 \begin{equation*}
 \lim\limits_{\varepsilon\to 0^{+}} G^{\alpha_{p}}_{ \varepsilon }\tonde*{ u,\Omega } = \int_{ \Omega } \psi_{\alpha_{p}}\tonde*{ \nabla u } ,
 \end{equation*}
 where $ \psi_{\alpha_{p}} $ is as in \cref{def:psi_alphap}
 and $ G^{\alpha_{p}}_{ \varepsilon } $ is as in \cref{def:tipo_BMO}.

 Furthermore, if $ \mathcal{G} =\set*{ \operatorname{id}_{\mathbb{R}^{n}}}$ or $ \mathcal{G} = \mathrm{SO}\tonde*{n} $ and $ D $ is as in \cref{res:tessellation_useful} 
 the function $ \psi_{\alpha_{p}} $ can be written more explicitly
 according to \cref{res:fun_meno_media} or \cref{res:int_doppio}.
 \item\label{zxc2} if $ p \in \left(1,\infty\right) $, for every $ u \in L^{p}\tonde*{\Omega;\mathbb{R}^{n}} $,
 \begin{equation*} 
 \liminf\limits_{\varepsilon\to 0^{+}} G^{\alpha_{p}}_{ \varepsilon }\tonde*{ u,\Omega } <+\infty\iff u \in W^{1,p}\tonde*{\Omega;\mathbb{R}^{m}} .
 \end{equation*}
 \item\label{zxc3} if $ \Omega \subseteq \mathbb{R}^{n} $ is a connected open set,
 $ p \in \left(1,\infty\right) $ and $ u \in L^{p}_{\mathrm{loc}}\tonde*{\Omega;\mathbb{R}^{n}} $ satisfies $ \liminf\limits_{\varepsilon\to 0^{+}} G^{\alpha_{p}}_{ \varepsilon }\tonde*{ u,\Omega } =0$,
 then
 \begin{equation*}
 u\tonde*{ x } = h \text{ }\forall x \in \Omega , 
 \end{equation*}
 where $ h \in \mathbb{R}^{m} $.
 \end{enumerate} 
\end{corollary}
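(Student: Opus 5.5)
The plan is to reduce all three items to \cref{thm:Main_BMO}, \cref{res:cor_Main_BMO2} and \cref{res:constancy}. For that I first check their hypotheses for the three functionals.

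\textbf{Hypotheses.} By \cref{ex:fun_meno_media}, \cref{ex:fun_meno_c_inf} and \cref{ex:integ_doppio}, each $\alpha_p$ is a $p$-core functional. Here \eqref{alpha_p:poincare_type_bound} is the classical Poincaré inequality on the bounded connected Lipschitz set $D$. For \cref{ex:fun_meno_c_inf} one also uses $\inf_c\le$ the value at $c=u_D$. For \cref{ex:integ_doppio} one uses
\[
\strokedint_D\strokedint_D|u(x)-u(y)|_2^p\le 2^p\strokedint_D|u-u_D|_2^p .
\]
Each $\alpha_p(\cdot,D')$ is lower semicontinuous on $L^p$.

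The key fact is $\mathcal N_{\psi_{\alpha_p}}=\{0\}$, i.e. $\psi_{\alpha_p}>0$ on $\mathbb{R}^{m\times n}\setminus\{0\}$. This is \eqref{9832h9hh} in \cref{res:fun_meno_media} and \eqref{21j03h} in \cref{res:int_doppio}. For \cref{ex:fun_meno_c_inf} I argue the same way. Take $A\neq0$ and $D'\in\mathcal{G}^D_{1/2}(Q)$. Then
\[
\psi_{\alpha_p}(A)\ge G^{\alpha_p}_{1/2}(l^A,Q)\ge 2^{p-n}\inf_c\strokedint_{D'}|Ax-c|_2^p\,\mathrm{d}x .
\]
The right-hand side is positive, since an affine nonconstant map cannot equal a constant a.e. on a set of positive measure. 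Alternatively, the pointwise bounds $\alpha^{\rm c}_p\le\alpha^{\rm mean}_p\le 2^p\alpha^{\rm c}_p$ show $\psi$ for \cref{ex:fun_meno_c_inf} is comparable to that for \cref{ex:fun_meno_media}. In every case $\mathcal{P}=\mathbb{R}^{m\times n}$ and $\pi_{\mathcal P}=\operatorname{id}$, so $D_{\pi_\mathcal{P}}=\nabla$.

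\textbf{Item (1).} Let $u\in W^{1,p}_{\mathrm{loc}}(\Omega;\mathbb{R}^m)$ and extend it by zero to an element of $L^p_{\mathrm{loc}}(\mathbb{R}^n;\mathbb{R}^m)$. This is harmless by \eqref{tipo_BMO:ext_ind}. Since $\psi_{\alpha_p}\not\equiv0$, the last clause of \cref{thm:Main_BMO} gives the limit formula, including the case $\int_\Omega\psi_{\alpha_p}(\nabla u)=+\infty$. The explicit formulas follow from \cref{res:fun_meno_media} and \cref{res:int_doppio}. For \cref{ex:fun_meno_c_inf}, use \cref{res:formula_psi} directly.

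\textbf{Item (2).} Let $p>1$ and $u\in L^p(\Omega;\mathbb{R}^m)$, extended by zero. The second part of \cref{res:cor_Main_BMO2}, which needs lower semicontinuity and $\psi_{\alpha_p}>0$ off the origin, gives
\[
G^{\alpha_p}_-(u,\Omega)<+\infty\iff\nabla u\in L^p(\Omega;\mathbb{R}^{m\times n}).
\]
Together with $u\in L^p$, the right-hand side is exactly $u\in W^{1,p}(\Omega;\mathbb{R}^m)$.

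\textbf{Item (3).} Let $\Omega$ be connected and $u\in L^p_{\mathrm{loc}}(\Omega;\mathbb{R}^m)$ with $G^{\alpha_p}_-(u,\Omega)=0$. The statement writes $\mathbb{R}^n$, which I read as $\mathbb{R}^m$. Extending by zero and applying \cref{res:constancy} with $\pi_{\mathcal P}=\operatorname{id}$ gives $\nabla u=0$ in $\mathcal{D}'(\Omega)$, hence $u$ is a.e. equal to a constant $h$ on the connected set $\Omega$.

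\textbf{Main obstacle.} The only genuinely new verification is that \cref{ex:fun_meno_c_inf} satisfies the axioms, in particular convexity of the infimum in $u$. I would obtain it by noting that $(u,c)\mapsto\strokedint|u-c|^p$ is jointly convex, and partial minimization of a jointly convex function is convex. One should also check that the infimum is attained, so that lower semicontinuity holds. Translation invariance \eqref{alpha_p:outer_trans_inv} and extension independence \eqref{alpha_p:ext_ind} are immediate. Everything else is quoting earlier results.
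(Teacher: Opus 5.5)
Your proposal is correct and follows the paper's own route. Items (1)--(2) come from \cref{thm:Main_BMO} and \cref{res:cor_Main_BMO2} once $\mathcal{N}_{\psi_{\alpha_p}}=\{0\}$ is known, and item (3) comes from \cref{res:constancy} with $\pi_{\mathcal{P}}=\operatorname{id}$. Your two-sided comparison of \cref{ex:fun_meno_c_inf} with \cref{ex:fun_meno_media} usefully covers a case that the paper's one-line proof does not treat separately.

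The only slip is your claim that the zero extension of an arbitrary $u\in W^{1,p}_{\mathrm{loc}}(\Omega;\mathbb{R}^m)$ (or $u\in L^p_{\mathrm{loc}}(\Omega;\mathbb{R}^m)$ in item (3)) lies in $L^p_{\mathrm{loc}}(\mathbb{R}^n;\mathbb{R}^m)$. This fails when $u$ is not $p$-integrable near $\partial\Omega$. Items (1) and (3) should be read, as in \cref{thm:Main_BMO} and \cref{def:tipo_BMO}, for $u\in L^p_{\mathrm{loc}}(\mathbb{R}^n;\mathbb{R}^m)$ with the stated property of $\restr{u}{\Omega}$; in that reading no extension is needed.
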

\begin{proof}
 The proof of \eqref{zxc1} and \eqref{zxc2} follows combining \cref{res:fun_meno_media}, \cref{res:int_doppio},
 \cref{thm:Main_BMO} and \cref{res:cor_Main_BMO2}.
 To prove \eqref{zxc3} we apply \cref{res:constancy}.
\end{proof}

\subsection{Characterization of \texorpdfstring{$E^{1,p}(\Omega;\mathbb{R}^n)$}{E1p(Omega;Rn)}}
\phantom{}

In this section we will consider examples of $p$-core functionals used to characterize the space $E^{1,p}(\Omega;\mathbb{R}^n)$, of $L^{p}(\Omega;\mathbb{R}^n)$ functions whose distributional symmetric gradient is $p$-integrable, when $p\in(1,\infty)$ and to approximate the following $L^p$ norm of the distributional symmetric gradient of functions in $E^{1,p}(\Omega;\mathbb{R}^n)$ for $p\in[1,\infty)$:
\begin{equation*}
    \int_\Omega |\mathcal{E} u|^p_{\alpha_p}\mathrm{d} x,
\end{equation*}
where $|.|_{\alpha_p}\coloneqq\psi_{\alpha_p}^{1/p}(.)$
is a (possibly anisotropic) norm in $\mathbb{R}^{m\times n}$ (\cref{res:WL_antisym}).

In this section $ D \subseteq \mathbb{R}^{n} $ will always denote a bounded connected open set with Lipschitz boundary.

\begin{example}[][ex:inf_antisym]
 Let $ p \in \left[1,\infty\right) $ and $ \mathcal{G} =\set*{ \operatorname{id}_{\mathbb{R}^{n}}}$.
 We define,
 \begin{equation}\label{j0dsa}
 \alpha_{ p}\tonde*{ u , D' } 
 \coloneqq
 \inf\limits_{ A \in \mathbb{R}^{n\times n}_{\mathrm{skew}}} 
 \strokedint_{ D'} \abs*{ u\tonde*{ x } - A \tonde*{ x - z }- u_{ D'}}^{ p }_{2} \thinspace\mathrm{d} x ,
 \end{equation}
 where $ z = \operatorname{bar}\tonde*{ D' } $ is the barycenter of $ D' $.
 Then, $ \alpha_{ p} = \alpha_{ p , D, \Gamma ^{\mathcal{G}} ,m } $ 
 is a $p$-core functional as in \cref{def:core_function}
 subordinated to $ \Gamma ^{\mathcal{G}} $ as in \cref{def:Gamma_G}.
 Moreover, if $ p \in \left(1,\infty\right) $ $ \alpha_{ p} $ is a strong $p$-core functional as in \cref{def:strong_core_function},
 thanks to the \emph{Korn inequality} (cfr. \cite[Eq. (2.2)]{DurMus04-MR2108898}).
\end{example}
\begin{remark}[][res:min_antisym]
 Under the assumptions of \cref{ex:inf_antisym}, we observe that the infimum 
 in \eqref{j0dsa} is actually a minimum.
 Indeed, let $ u \in L^{p}_{\mathrm{loc}}\tonde*{\mathbb{R}^{n};\mathbb{R}^{n}} $ and $ D' \in \mathcal{G}^{ D}\tonde*{\mathbb{R}^{n}} $.
 We define,
 \begin{equation}
 f\tonde*{ A } \coloneqq \strokedint_{ D'} \abs*{ u\tonde*{ x } - A \tonde*{ x - \operatorname{bar}\tonde*{ D' }}- u_{ D'}}^{ p }_{2} \thinspace\mathrm{d} x \text{ for every } A \in \mathbb{R}^{n\times n} .
 \end{equation}
 We observe that $ f $ is coercive, since for every $ \eta \in \left(0,1\right) $,
 \begin{equation*}
 f\tonde*{ A } \geq\frac{1}{(1+ \eta )^ p } \strokedint_{ D'} \abs*{ A \tonde*{ x - \operatorname{bar}\tonde*{ D' }}}^{ p }_{2} \thinspace\mathrm{d} x 
 -
 \frac{1}{ \eta ^ p } \strokedint_{ D'} \abs*{ u\tonde*{ x } - u_{ D'}}^{ p }_{2} \thinspace\mathrm{d} x ,
 \end{equation*}
 \begin{equation*}
 \lim\limits_{ \abs*{ A }_{2} \to +\infty} \strokedint_{ D'} \abs*{ A \tonde*{ x - \operatorname{bar}\tonde*{ D' }}}^{ p }_{2} =+\infty.
 \end{equation*}
 Moreover $ f $ is convex and $ \mathbb{R}^{n\times n}_{\mathrm{skew}} $ is a closed subset of $ \mathbb{R}^{n} $.
 This, implies that the minimum of $\restr{ f }{ \mathbb{R}^{n\times n}_{\mathrm{skew}}}$ is attained.
\end{remark}

\begin{proposition}[][res:inf_antisym_lsc]
 Let $ p \in \left[1,\infty\right) $ and
 $ \mathcal{G} =\set*{ \operatorname{id}_{\mathbb{R}^{n}}}$.
 Let $ \alpha_{ p} = \alpha_{ p , D, \Gamma ^{\mathcal{G}} ,m } $ 
 be the $p$-core functional in \cref{ex:inf_antisym}
 subordinated to $ \Gamma ^{\mathcal{G}} $ as in \cref{def:Gamma_G}.
 Then,
 $\restr{ \alpha_{ p}\tonde*{\cdot, D' }}{ L^{p}\tonde*{\mathbb{R}^{n};\mathbb{R}^{n}}}: L^{p}\tonde*{\mathbb{R}^{n};\mathbb{R}^{n}} \to \left[0,+\infty\right) $
 is lower semicontinuous for every $ D' \in \mathcal{G}^{ D}\tonde*{\mathbb{R}^{n}} $.
\end{proposition}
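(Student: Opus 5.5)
Take a sequence $u_k\to u$ in $L^{p}\tonde*{\mathbb{R}^{n};\mathbb{R}^{n}}$ and fix $D'$. The plan is to write $\alpha_{p}\tonde*{u_k,D'}$ through minimizers and pass to the limit. By \cref{res:min_antisym}, for each $k$ there is $A_k\in\mathbb{R}^{n\times n}_{\mathrm{skew}}$ with
\begin{equation*}
\alpha_{p}\tonde*{u_k,D'}=\strokedint_{D'}\abs*{u_k(x)-A_k\tonde*{x-z}-(u_k)_{D'}}^{p}_{2}\thinspace\mathrm{d} x ,
\end{equation*}
where $z=\operatorname{bar}\tonde*{D'}$. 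After passing to a subsequence, we may assume $\liminf_k\alpha_{p}\tonde*{u_k,D'}$ is a limit, and also that it is finite; otherwise there is nothing to prove.

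The key step is to show that $\set*{A_k}$ is bounded. I would reuse the coercivity estimate from \cref{res:min_antisym} with a fixed $\eta\in(0,1)$:
\begin{equation*}
\frac{1}{(1+\eta)^{p}}\strokedint_{D'}\abs*{A_k\tonde*{x-z}}^{p}_{2}\thinspace\mathrm{d} x\leq \alpha_{p}\tonde*{u_k,D'}+\frac{1}{\eta^{p}}\strokedint_{D'}\abs*{u_k-(u_k)_{D'}}^{p}_{2}\thinspace\mathrm{d} x .
\end{equation*}
The right-hand side is bounded uniformly in $k$. Indeed, $u_k\to u$ in $L^{p}(D')$, so $\norm*{u_k}_{L^p(D')}$ is bounded and $(u_k)_{D'}\to u_{D'}$. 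The map $A\mapsto \big(\strokedint_{D'}\abs*{A(x-z)}_2^{p}\,\mathrm{d}x\big)^{1/p}$ is a norm on $\mathbb{R}^{n\times n}$, since $D'$ is open and nonempty; by finite dimensionality it is equivalent to $\abs*{\cdot}_2$. Hence $\sup_k\abs*{A_k}_2<\infty$, and after extracting a further subsequence $A_k\to \bar A$, with $\bar A\in\mathbb{R}^{n\times n}_{\mathrm{skew}}$ because the skew matrices form a closed subspace.

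To conclude, note that $u_k-A_k(\cdot-z)-(u_k)_{D'}\to u-\bar A(\cdot-z)-u_{D'}$ in $L^{p}(D')$, since $D'$ is bounded, so the affine parts converge uniformly there. By continuity of the $L^p$ norm (or by Fatou along an a.e. convergent subsequence),
\begin{equation*}
\alpha_{p}\tonde*{u,D'}\leq\strokedint_{D'}\abs*{u-\bar A(x-z)-u_{D'}}^{p}_{2}\thinspace\mathrm{d} x=\lim_k\alpha_{p}\tonde*{u_k,D'}=\liminf_k\alpha_{p}\tonde*{u_k,D'},
\end{equation*}
where the first inequality holds because $\bar A$ is admissible in the infimum. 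This gives lower semicontinuity.

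The only delicate point is the uniform bound on the minimizers $A_k$; the coercivity argument above settles it. In fact the argument gives continuity, not just lower semicontinuity. One could even avoid minimizers altogether: $\alpha_p(\cdot,D')^{1/p}$ is an infimum of $1$-Lipschitz functionals of $u$ in $L^p(D')$ (the averaging map is bounded), hence continuous.
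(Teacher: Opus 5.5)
Your argument is correct and is essentially the paper's proof: bound the optimal skew matrices $A_k$ via the coercivity estimate of \cref{res:min_antisym} and the $L^p$-boundedness of $u_k$, extract $A_k\to\bar A\in\mathbb{R}^{n\times n}_{\mathrm{skew}}$, and pass to the limit. The only differences are cosmetic (you use exact minimizers rather than the paper's $1/k$-minimizers, and strong $L^p(D')$ convergence rather than Fatou along an a.e.\ convergent subsequence); your closing aside is also right, since $\alpha_p(\cdot,D')^{1/p}$ is a seminorm dominated by twice the averaged $L^p(D')$ norm and hence Lipschitz, though the functionals $u\mapsto \big(\strokedint_{D'}\abs{u-A(x-z)-u_{D'}}_2^p\,\mathrm{d}x\big)^{1/p}$ are in general only Lipschitz with constant at most $2$, not $1$, because of the averaging term.
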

\begin{proof}
 Let $ D' \in \mathcal{G}^{ D}\tonde*{\mathbb{R}^{n}} $, $ z = \operatorname{bar}\tonde*{ D' } $, $ u \in L^{p}\tonde*{\mathbb{R}^{n};\mathbb{R}^{n}} $ and $ \graffe*{u_{k}}_{k\in\mathbb{N}} \subset L^{p}\tonde*{\mathbb{R}^{n};\mathbb{R}^{n}} $ such that $ u_{k} \to u $ in $ L^{p}\tonde*{\mathbb{R}^{n};\mathbb{R}^{n}} $.
 Without loss of generality we assume that $ u_{k} \to u $ $ \operatorname{a.e.} $ in $ \mathbb{R}^{n} $ and,
 \begin{equation}\label{wlogg}
 \lim\limits_{ k\to +\infty} \alpha_{ p}\tonde*{ u_{k}, D' } 
 =
 \liminf\limits_{ k\to +\infty} \alpha_{ p}\tonde*{ u_{k}, D' } <+\infty.
 \end{equation}
 For every $ k \in \mathbb{N} $ there exist $ A_{k} \in \mathbb{R}^{n\times n}_{\mathrm{skew}} $ such that,
 \begin{equation}\label{definf}
 \strokedint_{ D'} \abs*{ u_{k}\tonde*{ x } - A_{k} ( x - z )- \strokedint_{ D'} u_{k}}^{ p }_{2} \thinspace\mathrm{d} x \leq \alpha_{ p}\tonde*{ u_{k}, D' } +1/ k .
 \end{equation}
 Since $ \graffe*{u_{k}}_{k\in\mathbb{N}} $ is bounded in $ L^{p}\tonde*{\mathbb{R}^{n};\mathbb{R}^{n}} $, combining \eqref{wlogg} and \eqref{definf}
 we obtain that
 \begin{equation*}
 \sup\limits_{ k \in \mathbb{N}} \abs*{ A_{k}}_{2} <+\infty.
 \end{equation*}
 Then, up to subsequences, $ A_{k} \to A_{\infty} $ for some $ A_{\infty} \in \mathbb{R}^{n\times n}_{\mathrm{skew}} $.
 Then, by Fatou's lemma and \eqref{definf} we get that 
 \begin{equation*}
 \begin{split}
 & \alpha_{ p}\tonde*{ u , D' } \leq \strokedint_{ D'} \abs*{ u\tonde*{ x } - A_{\infty} ( x - z )- \strokedint_{ D'} u }^{ p }_{2} \thinspace\mathrm{d} x \\
 &\leq \liminf\limits_{ k\to +\infty} \strokedint_{ D'} \abs*{ u_{k}\tonde*{ x } - A_{k} ( x - z )- \strokedint_{ D'} u_{k}}^{ p }_{2} \thinspace\mathrm{d} x 
 \leq \liminf\limits_{ k\to +\infty} \alpha_{ p}\tonde*{ u_{k}, D' } .
 \end{split}
 \end{equation*}
\end{proof}

\begin{proposition}[][res:inf_antisym]
 Let $ p \in \left[1,\infty\right) $ and
 $ \mathcal{G} =\set*{ \operatorname{id}_{\mathbb{R}^{n}}}$.
 Let $ \alpha_{ p} = \alpha_{ p , D, \Gamma ^{\mathcal{G}} ,m } $ 
 be the $p$-core functional in \cref{ex:inf_antisym}
 subordinated to $ \Gamma ^{\mathcal{G}} $ as in \cref{def:Gamma_G}.
 Recalling \cref{def:psi_alphap},
 if $ D $ is as in \cref{res:tessellation_useful},
 then for every $ B \in \mathbb{R}^{n\times n} $,
 \begin{equation}\label{h3298hedw}
 \psi_{\alpha_{p}}\tonde*{ B } 
 =\frac{1}{ \abs*{ D }} \inf\limits_{ A \in \mathbb{R}^{n\times n}_{\mathrm{skew}}} \strokedint_{ D} \abs*{\tonde*{ B - A }\tonde*{ x - \operatorname{bar}\tonde*{ D }}}^{ p }_{2} \thinspace\mathrm{d} x ,
 \end{equation}
 and
 \begin{equation}\label{nul_skew}
 \mathcal{N}_{\psi_{\alpha_{p}}} = \mathbb{R}^{n\times n}_{\mathrm{skew}} ,
 \end{equation}
 so that 
 \begin{equation*}
 \psi_{\alpha_{p}}\tonde*{ B } = \psi_{\alpha_{p}}\tonde*{\frac{ B + B ^{T}}{2}} ,
 \end{equation*}
 for every $ B \in \mathbb{R}^{n\times n} $.

 Moreover, if $ p =2$ and $ D = Q\tonde*{x;r} $ for some $ x \in \mathbb{R}^{n} $ and $ r >0$,
 then,
 \begin{equation}\label{13290hj}
 \psi_{\alpha_{2}}\tonde*{ B } =\frac{1}{ \abs*{ D } ^2} \norm*{ \abs*{ l^{ A}}_{2}}^{2}_{L^{2}\tonde*{D-\operatorname{bar}\tonde*{ D}}} ,
 \end{equation}
 where
 \begin{equation*}
 A =\frac{ B + B ^{T}}{2}.
 \end{equation*}

\end{proposition}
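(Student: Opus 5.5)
The plan is to read off $\psi_{\alpha_p}$ from \cref{res:formula_psi}, compute $\alpha_p$ on linear maps directly, and then use the structural facts in \cref{res:prop_psi_alphap}.

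\textbf{Explicit formula.} Since $D$ is as in \cref{res:tessellation_useful} and $\mathcal{G}=\set*{\operatorname{id}_{\mathbb{R}^n}}$, the no-gaps families exist. Hence the equality case of \cref{res:formula_psi} applies. Because $B\mathcal{G}=\set*{B}$, it gives $\psi_{\alpha_p}(B)=\abs*{D}^{-1}\alpha_p(l^B,D)$. Next I compute $\alpha_p(l^B,D)$. With $z=\operatorname{bar}(D)$, we have $(l^B)_D=Bz$, so for every skew $A$
\[
l^B(x)-A(x-z)-(l^B)_D=(B-A)(x-z).
\]
Substituting into \eqref{j0dsa} gives \eqref{h3298hedw}.

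\textbf{Null space and symmetric part.} If $B$ is skew, choosing $A=B$ in \eqref{h3298hedw} gives $\psi_{\alpha_p}(B)=0$. Conversely, suppose $\psi_{\alpha_p}(B)=0$. The infimum in \eqref{h3298hedw} is attained, by the same coercivity and convexity argument as in \cref{res:min_antisym} applied to $u=l^B$. So there is a skew $A$ with $(B-A)(x-z)=0$ for a.e.\ $x\in D$. Since $D$ is open and nonempty, the linear map $B-A$ vanishes on an open set of vectors $x-z$, hence $B=A$ is skew. This proves \eqref{nul_skew}. Writing $B=\frac{B+B^T}{2}+\frac{B-B^T}{2}$ with the second term in $\mathcal{N}_{\psi_{\alpha_p}}$, \eqref{psi_alphap_tran_null} yields $\psi_{\alpha_p}(B)=\psi_{\alpha_p}\tonde*{\frac{B+B^T}{2}}$.

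\textbf{Case $p=2$, $D=Q(x;r)$.} Cubes tile space, so the formula above applies. By the previous step we may replace $B$ by $S:=\frac{B+B^T}{2}$, so we must show that the infimum over skew $K$ of $\strokedint_D\abs*{(S-K)y}_2^2$ (with $y=x-\operatorname{bar}(D)$) is attained at $K=0$. Expanding,
\[
\strokedint\abs*{(S-K)y}_2^2=\strokedint\abs*{Sy}_2^2+\strokedint\abs*{Ky}_2^2-2\strokedint y^TSKy .
\]
For a cube centered at its barycenter, $\strokedint y_iy_j=c\,\delta_{ij}$. Hence the cross term equals $c\operatorname{tr}(SK)$, which is zero because the trace of a symmetric matrix times a skew one vanishes. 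So the infimum is $\strokedint_D\abs*{Sy}_2^2$. Multiplying by $1/\abs*{D}$ gives \eqref{13290hj}, with $A=S$ as in the statement.

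The only mildly delicate point is the attainment of the infimum in the null-space step; it is handled by \cref{res:min_antisym}.
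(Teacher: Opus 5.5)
Your proof is correct. For \eqref{h3298hedw} and \eqref{nul_skew} it is exactly the paper's argument: compute $\alpha_p(l^B,D)$ directly, apply the equality case of \cref{res:formula_psi} (with $B\mathcal{G}=\{B\}$), and obtain the null space from attainment of the infimum as in \cref{res:min_antisym}.

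The only real divergence is the case $p=2$, $D=Q(x;r)$. The paper keeps $B$ general and differentiates $A\mapsto f_B(A-A^T)$ to get the stationarity system \eqref{vspl1}. It then uses the cube's moment identities and strict convexity of $f_B$ on skew matrices to identify the minimizer $(B-B^T)/2$.

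You instead first use \eqref{psi_alphap_tran_null} to replace $B$ by $S=(B+B^T)/2$. Since the second-moment matrix of the centered cube is $c\,\mathrm{Id}$, the map $K\mapsto \frac{1}{|D|}\int_D|(S-K)y|_2^2\,\mathrm{d}x$ (with $y=x-\operatorname{bar}(D)$) equals $c\operatorname{tr}\bigl((S-K)(S-K)^T\bigr)$. Its cross term $c\operatorname{tr}(SK)$ vanishes for $S$ symmetric and $K$ skew, so $K=0$ is the unique optimum.

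Your route is shorter and needs neither calculus nor the strict-convexity claim. It also shows why \eqref{13290hj} holds: for an isotropic reference cell the problem is a Frobenius-orthogonal projection, and symmetric and skew matrices are orthogonal. The paper's stationarity equations have the modest advantage of being written for a general reference set, so they show exactly where the cube's moments enter.
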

\begin{proof}
 We observe that for every $ D' \in \mathcal{G}^{ D}\tonde*{\mathbb{R}^{n}} $ and $ B \in \mathbb{R}^{n\times n} $,
 \begin{equation}\label{fd94h9122}
 \begin{split}
 & \alpha_{ p}\tonde*{ l^{ B}, D' } 
 =
 \inf\limits_{ A \in \mathbb{R}^{n\times n}_{\mathrm{skew}}} \strokedint_{ D'} \abs*{ l^{ B}\tonde*{ x } - A \tonde*{ x - \operatorname{bar}\tonde*{ D' }}- \strokedint_{ D' } l^{ B}\tonde*{ y } \thinspace\mathrm{d} y }^{ p }_{2} \thinspace\mathrm{d} x 
 \\
 &=
 \inf\limits_{ A \in \mathbb{R}^{n\times n}_{\mathrm{skew}}} \strokedint_{ D'} \abs*{\tonde*{ B - A }\tonde*{ x - \operatorname{bar}\tonde*{ D' }}}^{ p }_{2} \thinspace\mathrm{d} x .
 \end{split}
 \end{equation}
 Then, \eqref{h3298hedw} follows from \cref{res:formula_psi}.
 Moreover, \eqref{nul_skew} follows from \cref{res:min_antisym} and \eqref{h3298hedw}.

 Let $ B \in \mathbb{R}^{m\times n} $ and define,
 \begin{equation*}
 f_{B}\tonde*{ A } \coloneqq \strokedint_{ D - \operatorname{bar}\tonde*{ D }} \abs*{\tonde*{ B - A } x }^{ p }_{2} \thinspace\mathrm{d} x \text{ }\forall A \in \mathbb{R}^{m\times n} .
 \end{equation*}
 Then, by \eqref{h3298hedw} and \cref{res:min_antisym},
 \begin{equation*}
 \psi_{\alpha_{p}}\tonde*{ B } 
 =\frac{1}{ \abs*{ D }} \min\limits_{ A \in \mathbb{R}^{n\times n}_{\mathrm{skew}}} f_{B}\tonde*{ A } 
 =\frac{1}{ \abs*{ D }} \min\limits_{ A \in \mathbb{R}^{n\times n}} f_{B}\tonde*{ A - A ^{T}} .
 \end{equation*}

 If $ p >1$ we can then find the minimum point of the strictly convex function $\restr{ f_{B}}{ \mathbb{R}^{n\times n}_{\mathrm{skew}}}$
 by differentiating $ \mathbb{R}^{n\times n} \ni A \mapsto f_{B}\tonde*{ A - A ^{T}} $ and finding its stationary point.

 If particular, if $ p =2$ we obtain the following conditions,
 \begin{equation}\label{vspl1}
 \begin{split}
 &\sum_{ j =1}^{ n }\tonde*{ A_{ k j } - A_{ j k } - B_{ k j }} \int_{ D - \operatorname{bar}\tonde*{ D }} x_{ j } x_{ l } \thinspace\mathrm{d} x 
 \\
 &=\sum_{ j =1}^{ n }\tonde*{ A_{ l j } - A_{ j l } - B_{ l j }} \int_{ D - \operatorname{bar}\tonde*{ D }} x_{ j } x_{ k } \thinspace\mathrm{d} x ,
 \end{split}
 \end{equation}
 for every $ k , l \in\set*{1,\dots, n }$.

 Without loss of generality we may assume $ \operatorname{bar}\tonde*{ D} =0$.
 Then, $ D $ is equal to the cube $\tonde{-\frac{ r }{2},\frac{ r }{2}}^ n $, so that,
 \begin{equation}\label{osdk1}
 \int_{ D } x_{ i } x_{ j } \thinspace\mathrm{d} x =0\text{ for every } i , j \in\set*{1,\dots, n },
 \end{equation}
 \begin{equation}\label{osdk2}
 \int_{ D } x_{ i } ^2 \thinspace\mathrm{d} x = \int_{ D } x_{ j } ^2 \thinspace\mathrm{d} x \text{ for every } i , j \in\set*{1,\dots, n }.
 \end{equation}
 Thanks to \eqref{osdk1} then \eqref{vspl1} becomes,
 \begin{equation}\label{vspl2}
 \tonde*{ A_{ k l } - A_{ l k } - B_{ k l }} \int_{ D } x_{ l } ^2 \thinspace\mathrm{d} x =
 \tonde*{ A_{ l k } - A_{ k l } - B_{ l k }} \int_{ D } x_{ k } ^2 \thinspace\mathrm{d} x .
 \end{equation}
 Finally, thanks to \eqref{osdk2},
 \eqref{vspl2} becomes,
 \begin{equation}\label{vspl3}
 A_{ k l } - A_{ l k } =\frac{ B_{ k l } - B_{ l k }}{2} \text{ for every } k , l \in\set*{1,\dots, n }.
 \end{equation}
 \eqref{vspl3} implies that the minimum of $\restr{ f_{B}}{ \mathbb{R}^{n\times n}_{\mathrm{skew}}}$
 is attained when $ A =\tonde*{ B - B ^{T}}/2$. Therefore,
 \begin{equation*}
 \psi_{\alpha_{p}}\tonde*{ B } 
 =\frac{1}{ \abs*{ D }} \strokedint_{ D - \operatorname{bar}\tonde*{ D }} \abs*{\frac{ B + B ^{T}}{2}\tonde*{ x }}^{2}_{2} \thinspace\mathrm{d} x ,
 \end{equation*}
 so \eqref{13290hj} is proved.
\end{proof}

The following example is similar to \cref{ex:inf_antisym}
\begin{example}[][ex:inf_antisym_meno_c]
    Let $ p \in \left[1,\infty\right) $ and $ \mathcal{G} =\set*{ \operatorname{id}_{\mathbb{R}^{n}}}$.
 We define,
\begin{equation*}
 \alpha_{ p}\tonde*{ u , D' } 
 \coloneqq
 \inf\limits_{ A \in \mathbb{R}^{n\times n}_{\mathrm{skew}} , c \in \mathbb{R}^{m}} 
 \strokedint_{ D'} \abs*{ u\tonde*{ x } - A \tonde*{ x - z }- c }^{ p }_{2} \thinspace\mathrm{d} x ,
 \end{equation*}
 where $ z = \operatorname{bar}\tonde*{ D' } $ is the barycenter of $ D' $.
 Then, $ \alpha_{ p} = \alpha_{ p , D, \Gamma ^{\mathcal{G}} ,m } $ 
 is a $p$-core functional as in \cref{def:core_function}
 subordinated to $ \Gamma ^{\mathcal{G}} $ as in \cref{def:Gamma_G}.
 Moreover, if $ p \in \left(1,\infty\right) $ $ \alpha_{ p} $ is a strong $p$-core functional as in \cref{def:strong_core_function},
 thanks to the \emph{Korn inequality} (cfr. \cite[Eq. (2.2)]{DurMus04-MR2108898}).

\end{example}

The following result shows how \cref{ex:inf_antisym} can be used in the study of $E^{1,p}\tonde*{\Omega;\mathbb{R}^{n}}$.
\begin{corollary}[][res:WL_antisym]
 Let $ \Omega \subseteq \mathbb{R}^{n} $ be an open set, $ p \in \left[1,\infty\right) $,
 $ \mathcal{G} =\set*{ \operatorname{id}_{\mathbb{R}^{n}}}$ and $ D \subseteq \mathbb{R}^{n} $ be a bounded connected open set with Lipschitz boundary.
 Let $ \alpha_{ p} = \alpha_{ p , D, \Gamma ^{\mathcal{G}} ,m } $ 
 be one of the $p$-core functionals in \cref{ex:inf_antisym} or \cref{ex:inf_antisym_meno_c}
 subordinated to $ \Gamma ^{\mathcal{G}} $ as in \cref{def:Gamma_G}.

 Then,
 \begin{enumerate}
 \item\label{abc1} for every $ u \in E^{1,p}_{\mathrm{loc}}\tonde*{\Omega;\mathbb{R}^{n}} $,
 \begin{equation*}
 \lim\limits_{\varepsilon\to 0^{+}} G^{\alpha_{p}}_{ \varepsilon }\tonde*{ u,\Omega } = \int_{ \Omega } \psi_{\alpha_{p}}\tonde*{ \mathcal{E}u } ,
 \end{equation*}
 where $ \psi_{\alpha_{p}} $ is as in \cref{def:psi_alphap}
 and $ G^{\alpha_{p}}_{ \varepsilon } $ is as in \cref{def:tipo_BMO}.
 Furthermore, if $ D $ is as in \cref{res:tessellation_useful} 
 the function $ \psi_{\alpha_{p}} $ can be written more explicitly
 according to \cref{res:inf_antisym}.
 \item\label{abc2} if $ p \in \left(1,\infty\right) $, for every $ u \in L^{p}\tonde*{\Omega;\mathbb{R}^{n}} $,
 \begin{equation*} 
 \liminf\limits_{\varepsilon\to 0^{+}} G^{\alpha_{p}}_{ \varepsilon }\tonde*{ u,\Omega } <+\infty\iff u \in E^{1,p}\tonde*{\Omega;\mathbb{R}^{n}} .
 \end{equation*}
 \item\label{abc3} if $ \Omega \subseteq \mathbb{R}^{n} $ is a connected open set,
 $ p \in \left(1,\infty\right) $ and $ u \in L^{p}_{\mathrm{loc}}\tonde*{\Omega;\mathbb{R}^{n}} $ satisfies $ \liminf\limits_{\varepsilon\to 0^{+}} G^{\alpha_{p}}_{ \varepsilon }\tonde*{ u,\Omega } =0$,
 then
 \begin{equation}\label{9j02}
 u\tonde*{ x } = A x + h \text{ }\forall x \in \Omega , 
 \end{equation}
 where $ A \in \mathbb{R}^{n\times n}_{\mathrm{skew}} $ and $ h \in \mathbb{R}^{n} $.
 \end{enumerate} 
\end{corollary}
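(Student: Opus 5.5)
The plan is to obtain \cref{res:WL_antisym} as an instance of the abstract results of \cref{sec:characterizations}, applied to the two functionals of \cref{ex:inf_antisym} and \cref{ex:inf_antisym_meno_c}. We take as given what those examples assert: both are $p$-core functionals subordinated to $\Gamma^{\mathcal{G}}$ with $\mathcal{G}=\set*{\operatorname{id}_{\mathbb{R}^{n}}}$, and both are strong for $p>1$ by Korn's inequality on the Lipschitz connected set $D$.

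\textbf{Identifying $\mathcal{N}_{\psi_{\alpha_p}}$ and $\mathcal{P}$.} The first step is to show $\mathcal{N}_{\psi_{\alpha_p}}=\mathbb{R}^{n\times n}_{\mathrm{skew}}$ for every bounded Lipschitz connected $D$, not only for tessellating cells.
\begin{itemize}
\item For the inclusion $\supseteq$: if $B$ is skew, choose $A=B$ in the infimum of \eqref{fd94h9122}. This gives $\alpha_p(l^B,D')=0$ for every $D'$, hence $\psi_{\alpha_p}(B)=0$ by \eqref{eq:char_psip_Gsub}. The same choice works for \cref{ex:inf_antisym_meno_c}, where the optimal $c$ is the mean.
\item For the inclusion $\subseteq$: fix a single $D'\in\mathcal{G}^D_{1/2}(Q)$. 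As in the proof of \cref{res:fun_meno_media}, $\psi_{\alpha_p}(B)\ge 2^{p-n}\alpha_p(l^B,D')$. By \cref{res:min_antisym} the infimum is attained at some skew $A$. If $\psi_{\alpha_p}(B)=0$, then $(B-A)(x-\operatorname{bar}(D'))=0$ on a set of positive measure, so $B=A$ is skew.
\end{itemize}
We then choose $\mathcal{P}=\mathbb{R}^{n\times n}_{\mathrm{sym}}$. Then $\pi_{\mathcal{P}}(B)=(B+B^T)/2$, so $D_{\pi_{\mathcal{P}}}=\mathcal{E}$ and $W^{\pi_{\mathcal{P}},p}=E^{1,p}$. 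The strong bound \eqref{alpha_p:poin_typ_bdd_Pspace} is the Korn inequality $\inf_{A,b}\norm{u-Ax-b}_{L^p(D)}\le C\norm{\mathcal{E}u}_{L^p(D)}$ from \cite{DurMus04-MR2108898}. One checks that the mean subtraction and the centering at the barycenter cost only a constant.

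\textbf{Item \eqref{abc1}.} For $u\in E^{1,p}_{\mathrm{loc}}(\Omega;\mathbb{R}^n)$, apply \cref{thm:Main_BMO_WL}.
\begin{itemize}
\item If $\mathcal{E}u\in L^p(\Omega)$, this gives the formula directly.
\item Otherwise the second part of \cref{thm:Main_BMO_WL} applies, since $\psi_{\alpha_p}\not\equiv0$ (the null space is not all of $\mathbb{R}^{n\times n}$).
\end{itemize}
The explicit form of $\psi_{\alpha_p}$ for tessellating $D$ is \cref{res:inf_antisym}. For $p=1$ the strong property is unavailable, because Korn fails. I expect the main obstacle here, since the statement includes $p=1$. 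What I would try first is to restrict to the case where \cref{thm:Main_BMO} applies, namely $u\in W^{1,1}_{\mathrm{loc}}$, and note that then $\psi(\nabla u)=\psi(\mathcal{E}u)$ by \eqref{psi_alphap_tran_null}. A genuine $E^{1,1}$ statement would need an alternative argument, for instance density of smooth maps in $E^{1,1}$ combined with an upper bound by $\int|\mathcal{E}u|$. That upper bound fails without Korn, so I would flag this case as requiring $p>1$, or $W^{1,1}_{\mathrm{loc}}$ regularity.

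\textbf{Items \eqref{abc2} and \eqref{abc3}.} Item \eqref{abc2} is \cref{res:cor_Main_BMO2_WL}, using the identification $D_{\pi_{\mathcal{P}}}=\mathcal{E}$. The lower semicontinuity it requires is \cref{res:inf_antisym_lsc}; the analogous argument handles \cref{ex:inf_antisym_meno_c}, with $c_k$ bounded in addition to $A_k$.

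For item \eqref{abc3}, \cref{res:constancy} gives $\mathcal{E}u=0$ in $\mathcal{D}'(\Omega)$. A distribution with vanishing symmetric gradient on a connected open set is an infinitesimal rigid motion. To see this, recall that $\partial_j\partial_k u_i$ is a combination of derivatives of entries of $\mathcal{E}u$, so all second derivatives of $u$ vanish. Hence $u$ is affine on $\Omega$, and the symmetric part of its linear part is zero, which yields \eqref{9j02}.
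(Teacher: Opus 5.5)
\textbf{Overall.} For $p\in(1,\infty)$ your argument is the paper's. Items (1) and (2) follow from \cref{thm:Main_BMO_WL} and \cref{res:cor_Main_BMO2_WL} with $\mathcal{P}$ the symmetric matrices, so that $D_{\pi_{\mathcal{P}}}=\mathcal{E}$. Your direct proof that $\mathcal{N}_{\psi_{\alpha_p}}=\mathbb{R}^{n\times n}_{\mathrm{skew}}$ for every Lipschitz $D$ is genuinely needed, since \cref{res:inf_antisym} only establishes it for tessellating cells.

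\textbf{Item (3).} Here you take a different and more economical route. The paper uses the last assertion of \cref{res:constancy}, which requires the strong property, to get $\alpha_p(u,D')=0$ for every $D'\in\mathcal{G}^D(\Omega)$. By \cref{res:min_antisym}, $u$ is then an infinitesimal rigid motion on each cell, and these are glued by connectedness as in \cite[Lem.~8]{Ari17-MR3624874}. You only use $\mathcal{E}u=0$ in $\mathcal{D}'(\Omega)$ and the identity $\partial_j\partial_k u_i=\partial_j(\mathcal{E}u)_{ik}+\partial_k(\mathcal{E}u)_{ij}-\partial_i(\mathcal{E}u)_{jk}$. This needs neither Korn's inequality nor a chaining argument.

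\textbf{The gap: item (1) at $p=1$.} The missing case is $u\in E^{1,1}_{\mathrm{loc}}(\Omega;\mathbb{R}^n)$ whose gradient is not locally integrable. You leave it open, and the reason you give is mistaken.

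The bound \eqref{alpha_p:poin_typ_bdd_Pspace} never involves $\nabla u$. It only asks to control $u$ modulo infinitesimal rigid motions in $L^1(D)$ by $\norm{\mathcal{E}u}_{L^1(D)}$. What Ornstein's counterexample rules out at $p=1$ is Korn's inequality for the gradient, $\inf_A\norm{\nabla u-A}_{L^1(D)}\lesssim\norm{\mathcal{E}u}_{L^1(D)}$. The Poincar\'e--Korn inequality
\[
\inf_{A\in\mathbb{R}^{n\times n}_{\mathrm{skew}},\,b\in\mathbb{R}^{n}}\norm{u-Ax-b}_{L^{1}(D)}\le C\norm{\mathcal{E}u}_{L^{1}(D)}
\]
does hold on every bounded connected Lipschitz $D$, for all $u\in BD(D)\supset E^{1,1}(D;\mathbb{R}^n)$ (Strang--Temam). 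It follows from the compact embedding $BD(D)\hookrightarrow L^1(D)$ together with precisely the rigidity lemma you use for item (3).

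Choose $r(x)=Ax+b$ optimal and set $z=\operatorname{bar}(D)$. Then $r_D=Az+b$, hence $u-A(x-z)-u_D=(u-r)-(u-r)_D$, and
\[
\alpha_1(u,D)\le\frac{2}{\abs{D}}\norm{u-r}_{L^1(D)}\le C'\int_D\abs{\mathcal{E}u}_2\,\mathrm{d}x .
\]
The variant of \cref{ex:inf_antisym_meno_c} is handled even more directly.

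So both functionals are strong $1$-core functionals. The proof of \cref{thm:Main_BMO_WL} (via \cref{res:DL_density}, \eqref{alpha_p:poincare_+_Pspace} and \cref{res:tip_BMO_bound_Pspace}) nowhere uses $p>1$, so it yields item (1) at $p=1$ exactly as for $p>1$. Your fallback to $W^{1,1}_{\mathrm{loc}}$ via \cref{thm:Main_BMO} proves only a weaker statement.

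The paper's own proof of item (1) is the same one-line appeal to \cref{thm:Main_BMO_WL}. It therefore tacitly relies on the strong property at $p=1$, although \cref{ex:inf_antisym} asserts it only for $p>1$. You were right that the stated ingredients do not cover $p=1$, but wrong that the case is out of reach.
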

\begin{proof}
 The proof of \eqref{abc1} and \eqref{abc2} follows combining \cref{res:inf_antisym}, \cref{res:inf_antisym_lsc},
 \cref{thm:Main_BMO_WL} and \cref{res:cor_Main_BMO2_WL}.

 To prove \eqref{abc3} we apply \cref{res:constancy} and \cref{res:inf_antisym}
 to deduce that
 for every $ D' \in \mathcal{G}^{ D}\tonde*{\Omega} $ there exist $ A \in \mathbb{R}^{n\times n}_{\mathrm{skew}} $ and $ h \in \mathbb{R}^{n} $ (possibly depending on $ D' $)
 s.t. 
 \begin{equation*}
 u\tonde*{ x } = A x + h \text{ }\forall x \in D' .
 \end{equation*}
 At this point exploiting the connectedness of $ \Omega $ we can employ the simple argument in 
 \cite[Lem. 8]{Ari17-MR3624874} to deduce that \cref{9j02} holds
 for $ A \in \mathbb{R}^{n\times n}_{\mathrm{skew}} $ and $ h \in \mathbb{R}^{n} $.
\end{proof}

\subsection{Other examples of \texorpdfstring{$p$-core}{p-core} functionals}
 \begin{example}[][ex:trivial]
 Let $ m \geq1$ be a natural number and $ p \in \left[1,\infty\right) $ and $ \Gamma $ a subgroup of $ \mathrm{Aff}\tonde*{\mathbb{R}^{n}} $.
 Let $ D \subseteq \mathbb{R}^{n} $ be a bounded open set.
 We define,
 \begin{equation*}
 \alpha_{ p}\tonde*{ u , D' } \coloneqq0\text{ }
 \forall u \in L^{p}\tonde*{\mathbb{R}^{n};\mathbb{R}^{m}} 
 \forall D' \in \mathcal{E}^{ D}_{ \Gamma }\tonde*{\mathbb{R}^{n}} .
 \end{equation*}
 $ \alpha_{ p} \equiv0$ is a (trivial) strong $p$-core functional (cfr. \cref{res:obs_strong_p_core}).
\end{example}

If a $p$-core functional $ \alpha_{ p} $ is identically equal to zero (\cref{ex:trivial}),
then $ \psi_{\alpha_{p}} \equiv0$, but the vice-versa does not hold (see \cref{ex:counterex_psi_null}).

\begin{example}[][ex:counterex_psi_null]
 The following example is somehow pathological: linear maps
 are not able to entirely describe the behavior of the $p$-core functional $ \alpha_{ p} $ when 
 we are interested in computing the pointwise limit of $ G^{\alpha_{p}}_{ \varepsilon }\tonde*{ \cdot, \Omega } $ as
 $ \varepsilon \to 0^{+} $.

 We consider a $p$-core functional
 subordinated to $ p \in \left[1,\infty\right) $,
 the reference set $ D \coloneqq Q =(-1/2,1/2)^ n $, $ m \geq1$
 and the subgroup $ \Gamma ^{\mathcal{G}} $ with $ \mathcal{G} \coloneqq\set*{ \operatorname{id}_{\mathbb{R}^{n}}}$.
 The $p$-core functional is defined by,
 \begin{equation*}
 \alpha_{ p}\tonde*{ u , D' } 
 \coloneqq
 \inf\limits_{ A \in \mathbb{R}^{m\times n}} 
 \strokedint_{ D'} \abs*{ u\tonde*{ x } - A \tonde*{ x - \operatorname{bar}\tonde*{ D' }}- u_{ D'}}^{ p }_{2} \thinspace\mathrm{d} x .
 \end{equation*}
 Then, $ \psi_{\alpha_{p}} \equiv0$, since
 \begin{equation*}
 \alpha_{ p}\tonde*{ l^{ B}, D' } = \inf\limits_{ A \in \mathbb{R}^{m\times n}} \strokedint_{ D'} \abs*{\tonde*{ B - A }\tonde*{ x - \operatorname{bar}\tonde*{ D' }}}^{ p }_{2} \thinspace\mathrm{d} x =0,
 \end{equation*}
 for all $ B \in \mathbb{R}^{m\times n} $ and $ D' \in \mathcal{G}^{ D}\tonde*{\mathbb{R}^{n}} $.

 Being $ \psi_{\alpha_{p}} \equiv0$ and $ \alpha_{ p} \not\equiv0$, having in mind \cref{res:obs_strong_p_core},
 we deduce that $ \alpha_{ p} $ is not a strong $p$-core functional as in \cref{def:strong_core_function}.

 Let $ \Omega = D $ be the ambient space, $ \delta \in( n - p , n )\setminus\set*{0}$
 and $ u\tonde*{ x } \coloneqq \abs*{ x }^{- \delta / p }_{2} e_1$ for every $ x \in \Omega \setminus\set*{0}$
 and $ u\tonde*{ x } \coloneqq0$ for every $ x \in \mathbb{R}^{n} \setminus \Omega $.
 We observe that $ u \in L^{p}_{\mathrm{loc}}\tonde*{\mathbb{R}^{n};\mathbb{R}^{m}} $ but $\restr{ \nabla u }{ \Omega }\not\in L^{p}\tonde*{\Omega;\mathbb{R}^{m\times n}} $.
 Let,
 \begin{equation*}
 f\tonde*{ A } \coloneqq \strokedint_{ D} \abs*{ u\tonde*{ x } - A \tonde*{ x - \operatorname{bar}\tonde*{ D }}- u_{ D}}^{ p }_{2} \thinspace\mathrm{d} x \text{ }\forall A \in \mathbb{R}^{m\times n} .
 \end{equation*}
 We observe that $ f $ is convex and strictly positive.
 Indeed, if $ f $ vanished at some point $ A \in \mathbb{R}^{m\times n} $, then 
 $ u $ would be equal $ \operatorname{a.e.} $ in $ D $ to the  map $ l^{ A}\tonde*{\cdot- \operatorname{bar}\tonde*{ D }} - u_{ D} $.
 We also observe, using the argument in \cref{res:min_antisym}, that $ f $ is coercive.
 Therefore, $ \min\limits_{ A \in \mathbb{R}^{m\times n}} f\tonde*{ A } >0$ and for every $ \varepsilon >0$,
 \begin{equation*}\label{sa9h}
 \begin{split}
 & G^{ \alpha_{ p}}_{ \varepsilon }\tonde*{ u,\Omega } 
 \geq
 \varepsilon ^{ n - p } \alpha_{ p}\tonde*{ u , \varepsilon D } 
 =
 \varepsilon ^{ n - p } \alpha_{ p}\tonde*{ u\tonde*{ \varepsilon \cdot}, D } 
 \\
 &
 =
 \varepsilon ^{ n - p - \delta } \min\limits_{ A \in \mathbb{R}^{m\times n}} f\tonde*{ \varepsilon ^{ \delta / p } A } 
 =
 \varepsilon ^{ n - p - \delta } \min\limits_{ A \in \mathbb{R}^{m\times n}} f\tonde*{ A } ,
 \end{split}
 \end{equation*}
 thanks to \eqref{alpha_p:scaling} and the fact that $ u\tonde*{ \varepsilon x } = \varepsilon ^{- \delta / p } u\tonde*{ x } $ for every $ x \in D $.
 Therefore, 
 \begin{equation*}
 G^{ \alpha_{ p}}_{ +}\tonde*{ u,\Omega } \geq G^{ \alpha_{ p}}_{ -}\tonde*{ u,\Omega } 
 \geq
 \tonde*{ \min\limits_{ A \in \mathbb{R}^{m\times n}} f\tonde*{ A }}\tonde*{ \lim\limits_{\varepsilon\to 0^{+}} \varepsilon ^{ n - p - \delta }}=+\infty,
 \end{equation*}
 which implies that $ G^{ \alpha_{ p}}\tonde*{ u,\Omega } =+\infty$.
 Then, being $ \psi_{\alpha_{p}} \equiv0$,
 \eqref{eq:tesi_Main_BMO} and \eqref{eq:tesi_Main_BMO2} cannot hold.

 Finally, we highlight that $ \mathcal{N}_{\psi_{\alpha_{p}}} = \mathbb{R}^{m\times n} $, so that $ \mathcal{P} =\set*{0}$ trivially satisfies \eqref{eq:dir_sum_dec}
 and $ \pi_{\mathcal{P}} =0$.
 Therefore, $ D_{ \pi_{\mathcal{P}}}\tonde*{\restr{ u }{ \Omega }} =0\in L^{p}\tonde*{\Omega;\mathbb{R}^{m\times n}} $,
 but $ G^{ \alpha_{ p}}_{ -}\tonde*{ u,\Omega } = G^{ \alpha_{ p}}\tonde*{ u,\Omega } =+\infty$.
 This then shows that the implication \eqref{abcjh219h} in \cref{res:cor_Main_BMO2}
 cannot be reversed in general.

 This also proves in a different way that $ \alpha_{ p} $ is not a strong $p$-core functional, since this would be
 in contradiction with \cref{thm:Main_BMO_WL}.
\end{example}
\begin{example}
Let $ p \in \left[1,\infty\right) $, $m\geq 1$, $D\subset\mathbb{R}^n$ be a bounded open connected set with Lipschitz boundary and $ S \subset \mathbb{R}^{m} $ be a nonempty set.
 We define,
 \begin{equation*}
 \alpha_{ p}\tonde*{ u , D' } 
 \coloneqq
 \sup\limits_{\nu\in S } \strokedint_{ D'} \abs*{
 \tonde*{ u - u_{ D'}} \cdot \nu
 }^ p \thinspace\mathrm{d} x ,
 \end{equation*}
 Then, $ \alpha_{ p} : L^{p}_{\mathrm{loc}}\tonde*{\mathbb{R}^{n};\mathbb{R}^{m}} \times \mathcal{E}^{ D}_{ \Gamma }\tonde*{\mathbb{R}^{n}} \to \left[0,+\infty\right) $ is a $p$-core functional for any $ \Gamma $ subgroup of $ \mathrm{Aff}\tonde*{\mathbb{R}^{n}} $
. 
 Moreover, $\restr{ \alpha_{ p}\tonde*{\cdot, D' }}{ L^{p}\tonde*{\mathbb{R}^{n};\mathbb{R}^{m}}}: L^{p}\tonde*{\mathbb{R}^{n};\mathbb{R}^{m}} \to \left[0,+\infty\right) $
 is lower semicontinuous for every $ D' \in \mathcal{E}^{ D}_{ \Gamma }\tonde*{\mathbb{R}^{n}} $ being supremum of lower semicontinuous functionals.
 
In this case, it is easy to observe that $ \mathcal{N}_{\psi_{\alpha_{p}}} =\set*{ A \in \mathbb{R}^{m\times n} : A \tonde*{ \mathbb{R}^{n}}^{\perp}\supset S }$. Thus, if $S$ contains $m$ linearly independent vectors, then $\mathcal{N}_{\psi_{\alpha_p}}=\{0\}$ and can be used to characterize $W^{1,p}$. Otherwise, the null set is non trivial and can be used to approximate larger Sobolev-type spaces.
\end{example}

\bibliographystyle{amsalpha}
\bibliography{biblio}
 \end{document}